\newcommand{\Cdb}{\mbox{$\mathbb{C}$}}
\newcommand{\Rdb}{\mbox{$\mathbb{R}$}}
\newcommand{\Tdb}{\mbox{$\mathbb{T}$}}
\newcommand{\Zdb}{\mbox{$\mathbb{Z}$}}
\newcommand{\B}{\mbox{${\mathcal B}$}}
\newcommand{\E}{\mbox{${\mathcal E}$}}
\newcommand{\U}{\mbox{${\mathcal U}$}}
\newcommand{\norm}[1]{\Vert#1\Vert}
\newcommand{\bignorm}[1]{\bigl\Vert#1\bigr\Vert}
\newcommand{\Bignorm}[1]{\Bigl\Vert#1\Bigr\Vert}
\newcommand{\cbnorm}[1]{\Vert#1\Vert_{cb}}
\newcommand{\matnorm}[1]{\Vert#1\Vert_{{\rm Mat-}\gamma}}
\newcommand{\matnormR}[1]{\Vert#1\Vert_{{\rm Mat-}R}}
\newcommand{\Gnorm}[1]{\Vert#1\Vert_{G}}
\newcommand{\bigGnorm}[1]{\bigl\Vert#1\bigr\Vert_{G}}
\newcommand{\BigGnorm}[1]{\Bigl\Vert#1\Bigr\Vert_{G}}
\newcommand{\lnorm}[1]{\Vert#1\Vert_{\ell}}
\newcommand{\biglnorm}[1]{\bigl\Vert#1\bigr\Vert_{\ell}}
\newcommand{\Biglnorm}[1]{\Bigl\Vert#1\Bigr\Vert_{\ell}}
\newcommand{\ra}[1]{{\rm Rad}(#1)}
\newcommand{\rara}[1]{{\rm Rad}({\rm Rad}(#1))}
\newcommand{\rarara}[1]{{\rm Rad}({\rm Rad}({\rm Rad}(#1)))}
\newcommand{\triple}[1]{\vert\vert\vert{#1}\vert\vert\vert}
\newcommand{\bH}{\overline{H}}
\newcommand{\h}{\mathop{\otimes}\limits^{2}}
\newtheorem{theorem}{Theorem}[section]
\newtheorem{lemma}[theorem]{Lemma}
\newtheorem{corollary}[theorem]{Corollary}
\newtheorem{proposition}[theorem]{Proposition}
\newtheorem{definition}[theorem]{Definition}
\theoremstyle{remark}
\newtheorem{remark}[theorem]{\bf Remark}
\theoremstyle{definition}
\numberwithin{equation}{section}
\begin{document}
\baselineskip 15pt

\title[]{$\gamma$-bounded representations of amenable groups}

\author{Christian Le Merdy}
\address{D\'epartement de Math\'ematiques\\ Universit\'e de  Franche-Comt\'e
\\ 25030 Besan\c con Cedex\\ France}
\email{clemerdy@univ-fcomte.fr}


\thanks{The author is supported by the research
program ANR-06-BLAN-0015.}

\begin{abstract} Let $G$ be an amenable group, let $X$ be a Banach
space and let $\pi\colon G\to B(X)$ be a bounded representation.
We show that if the set $\{\pi(t)\, :\, t\in G\}$ is
$\gamma$-bounded then $\pi$ extends to a bounded homomorphism
$w\colon C^{*}(G)\to B(X)$ on the group $C^{*}$-algebra of $G$.
Moreover $w$ is necessarily $\gamma$-bounded. This extends to the
Banach space setting a theorem of Day and Dixmier saying that any bounded
representation of an amenable group on Hilbert space is
unitarizable. We obtain additional results and complements when 
$G=\Zdb,\,\Rdb$ or $\Tdb$, and/or when $X$
has property $(\alpha)$.

\end{abstract}

\maketitle

\bigskip\noindent
{\it 2000 Mathematics Subject Classification : 46B28, 47A60, 22D12}

\bigskip

\section{Introduction.}
The notions of $R$-boundedness and $\gamma$-boundedness play a
prominent role in various recent developments of operator valued
harmonic analysis and multiplier theory, see for example \cite{W1, SW, AB, BLM, H,
HW1, HW2, KW}. These notions are also now central in the closely
related fields of functional calculi (see \cite{KaW1, DHP, KuW}), abstract
control theory in Banach spaces \cite{HK1, HK2}, or vector valued stochastic integration, see
\cite{VNVW2} and the references therein. This paper is devoted to another aspect of harmonic
analysis, namely Banach space valued group representations. Our results 
will show that $\gamma$-boundedness
is the key concept to understand certain behaviors of such
representations.

Throughout we let $G$ be a locally compact group, we let $X$ be a complex
Banach space and we let $B(X)$ denote the Banach algebra of all
bounded operators on $X$. By a representation of $G$ on $X$, we
mean a strongly continuous mapping $\pi\colon G\to B(X)$ such that
$\pi(tt')=\pi(t)\pi(t')$ for any $t,t'$ in $G$, and $\pi(e)=I_X$.
Here $e$ and $I_X$ denote the unit of $G$ and the identity
operator on $X$, respectively. We say that $\pi$ is bounded if
moreover $\sup_{t\in G}\norm{\pi(t)}<\infty$. Assume that $G$ is
amenable and that $X=H$ is a Hilbert space. Then it follows from
the Day-Dixmier unitarization Theorem (see e.g. \cite[Chap. 0]{Pis2})
that any bounded representation of
$G$ on $H$ extends to a bounded homomorphism $C^{*}(G)\to B(H)$
from the group $C^*$-algebra $C^{*}(G)$ into $B(H)$. In general
this extension property is no longer possible when $H$ is replaced by an
arbitrary Banach space. To see a simple example, let $G$ be an infinite abelian group,
let $1\leq p<\infty$ and let
$\lambda_p\colon G\to B(L^{p}(G))$ be the regular
representation defined by letting $\bigl[\lambda_p(t)f](s)
=f(s-t)$ for any $f\in L^{p}(G)$. Recall that $C^{*}(G)=C_0(\widehat{G})$, 
where $\widehat{G}$ denotes the dual group of $G$. Hence if
$\lambda_p$ extends to a bounded homomorphism $C^{*}(G)\to B(L^{p}(G))$, 
then any function in $C_0(\widehat{G})$ is a bounded Fourier multiplier on $L^p(G)$.  
As is well-known, this implies that $p=2$, see e.g. \cite[Thm. 4.5.2]{La}. (See
also Corollary \ref{5Regular} for more on this.) 
This leads to the problem of finding conditions on a Banach
space representation $\pi\colon G\to B(X)$ ensuring that its
extension to a bounded homomorphism $C^{*}(G)\to B(X)$ is indeed
possible.

We recall the definitions of $\gamma$-boundedness and
$R$-boundedness. The latter is more classical (see \cite{CPSW}), but the two notions 
are completely similar. Let
$(g_k)_{k\geq 1}$ be a sequence of complex valued, independent
standard Gaussian variables on some probability space $\Sigma$.
For any $x_1,\ldots,x_n$ in $X$, we let
$$
\Bignorm{\sum_k g_k\otimes
x_k}_{G(X)}\,=\,\Bigl(\int_{\Sigma}\Bignorm{\sum_k g_k(\lambda) \,
x_k}^{2}_{X}\, d\lambda\,\Bigr)^{\frac{1}{2}}.
$$
Next we say that a set $F\subset B(X)$ is $\gamma$-bounded if
there is a constant $C\geq 0$ such that for any finite families
$T_1,\ldots, T_n$ in $F$, and $x_1,\ldots,x_n$ in $X$, we have
$$
\Bignorm{\sum_k g_k\otimes T_k x_k}_{G(X)}\,\leq\, C\,
\Bignorm{\sum_k g_k\otimes x_k}_{G(X)}.
$$
In this case, we let $\gamma(F)$ denote the smallest possible $C$.
This constant is called the $\gamma$-bound of $F$. Now let
$(\varepsilon_k)_{k\geq 1}$ be a sequence of independent
Rademacher variables on some probability space. Then replacing the
sequence $(g_k)_{k\geq 1}$ by the sequence $(\varepsilon_k)_{k\geq
1}$ in the above definitions, we obtain the notion of
$R$-boundedness. The corresponding $R$-bound constant of $F$ is denoted by $R(F)$. 
Using the symmetry of Gaussian variables, it is easy to see
(and well-known) that any $R$-bounded set $F\subset B(X)$ is automatically $\gamma$-bounded,
with $\gamma(F)\leq R(F)$. If further $X$ has a
finite cotype, then Rademacher averages and Gaussian averages are
equivalent (see e.g. \cite[Chap. 3]{Pis3}), hence the notions of $R$-boundedness and
$\gamma$-boundedness are equivalent. Clearly any $\gamma$-bounded set
is bounded and if $X$ is isomorphic to a Hilbert space, then any bounded 
set is $\gamma$-bounded. We recall that conversely if $X$ is not isomorphic to a Hibert
space, then there exist bounded sets $F\subset B(X)$ which are not
$\gamma$-bounded (see \cite[Prop. 1.13]{AB}).

Our main result asserts that if $G$ is amenable and if $\pi\colon
X\to B(X)$ is a representation such that $\{\pi(t)\, :\, t\in G\}$
is $\gamma$-bounded, then there exists a (necessarily unique)
bounded homomorphism $w\colon C^{*}(G)\to B(X)$ extending $\pi$
(see Definition \ref{2Def} for the precise meaning). Moreover $w$
is $\gamma$-bounded, i.e. it maps the unit ball of $C^{*}(G)$ into
a $\gamma$-bounded set of $B(X)$.

If $X$ has property $(\alpha)$, we obtain the
following analog of the Day-Dixmier unitarization Theorem: a
representation $\pi\colon G\to B(X)$ extends to a bounded
homomorphism $C^{*}(G)\to B(X)$ if and only if $\{\pi(t)\, :\,
t\in G\}$ is $\gamma$-bounded. As an illustration, consider the
case $G=\Zdb$ and recall that $C^{*}(\Zdb)=C(\Tdb)$. Let $T\colon
X\to X$ be an invertible operator on a Banach space with property
$(\alpha)$. We obtain that there exists a constant $C\geq 1$ such that
$$
\Bignorm{\sum_k c_k T^{k}}\,\leq\, C\,\sup\Bigl\{\Bigl\vert\sum_k
c_k z^{k}\Bigr\vert\, :\, z\in\Cdb,\, \vert z\vert =1\Bigr\}
$$
for any finite sequence $(c_k)_{k\in\footnotesize{\Zdb}}$ of
complex numbers, if and only if the set
$$
\bigl\{T^k\, :\, k\in\Zdb\bigr\}\qquad\hbox{is}\ \gamma\hbox{-bounded}.
$$

The main result presented above is established in Section 4. Its proof makes crucial use of the transference 
methods available on amenable groups (see \cite{CW}) and 
of the Kalton-Weis $\ell$-spaces introduced in the unpublished paper \cite{KaW2}. 
Sections 2 and 3 are devoted to preliminary results and background on these spaces 
and on group representations. In Section 5 we give a proof of the following result:
if a Banach space $X$ has property $(\alpha)$, then any bounded 
homomorphism $w\colon A\to B(X)$ defined on a nuclear $C^*$-algebra $A$ is automatically 
$R$-bounded (and even matricially $R$-bounded). This result is due 
to \'Eric Ricard (unpublished). In the case when $A$ is abelian,
it goes back to De Pagter-Ricker \cite{DR1} (see also \cite{KL}). 
Section 6 contains examples and illustrations, some of them using the above theorem.
We pay a special attention to the $\gamma$-bounded representations of the classical abelian 
groups $\Zdb,\Rdb,\Tdb$. 
 
\bigskip
We end this introduction with some notation and general references. 
First, we will use vector valued integration and Bochner $L^p$-spaces for which 
we refer to \cite{DU}. We let $G(X)\subset L^{2}(\Sigma;X)$
be the closed subspace spanned by the finite sums $\sum_k g_k\otimes x_k$, with $x_k\in X$. 
Next the space 
$\ra{X}$ is defined similarly, using the Rademacher sequence $(\varepsilon_k)_{k\geq 1}$.
For any $n\geq 1$, we let ${\rm Rad}_n(X)\subset \ra{X}$ be the subspace of all sums
$\sum_{k=1}^{n} \varepsilon_k\otimes x_k$. It follows from classical duality 
on Bochner spaces that
we have a natural isometric isomorphism
\begin{equation}\label{NSecond}
{\rm Rad}_n(X)^{**}\, =\, {\rm Rad}_n(X^{**}).
\end{equation}
Second, we refer to \cite{F} for general background on classical harmonic analysis. 
Given a locally compact group $G$, we let $dt$ denote a fixed left Haar measure on $G$.
For any $p\geq 1$, we let $L^p(G)= L^p(G,dt)$ denote the
corresponding $L^p$-space. We recall that the convolution on $G$ makes $L^{1}(G)$ a Banach
algebra. Finally we will use basic facts on $C^*$-algebras and Hilbert space representations, for which 
\cite{Pis2} and \cite{Pa} are relevant references. 

For any Banach spaces $X,Y$, we let $B(Y,X)$ denote the space of all bounded operators from $Y$ into $X$, 
equipped with the operator norm, and we set $B(X)=B(X,X)$. 
Given any set $V$, we let $\chi_V$ denote the indicator function of $V$.

\medskip
\section{Preliminaries on $\gamma$-bounded representations.}
We let $M_{n,m}$ denote the space of $n\times m$ scalar matrices equipped 
with its usual operator 
norm. We start with the following well-known tensor extension property,
for which we refer e.g. to  \cite[Cor. 12.17]{DJT}.

\begin{lemma}\label{2Inv}
Let $a=[a_{ij}]\in M_{n,m}$ and let
$x_1,\ldots, x_m\in X$. Then
$$
\Bignorm{\sum_{i,j} a_{ij} g_i\otimes
x_j}_{G(X)}\,\leq\,\norm{a}_{M_{n,m}}\, \bignorm{\sum_{j}
g_j\otimes x_j}_{G(X)}.
$$
\end{lemma}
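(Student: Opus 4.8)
The plan is to normalize and then to exploit the unitary invariance of Gaussian vectors. By homogeneity we may assume $\norm{a}_{M_{n,m}}\le 1$, the case $a=0$ being trivial. Put $y_i=\sum_{j=1}^{m}a_{ij}x_j$ for $1\le i\le n$, so that $\sum_{i,j}a_{ij}\,g_i\otimes x_j=\sum_{i=1}^{n}g_i\otimes y_i$ and the assertion becomes
$$
\Bignorm{\sum_{i=1}^{n}g_i\otimes y_i}_{G(X)}\,\le\,\Bignorm{\sum_{j=1}^{m}g_j\otimes x_j}_{G(X)},\qquad\text{where }(y_i)_i=a\,(x_j)_j\text{ and }\norm{a}_{M_{n,m}}\le 1.
$$

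I would use two elementary properties of the $G(X)$-norm, both immediate from the defining integral. \textbf{(i)} \emph{Monotonicity under adjunction of terms:} for $N\ge r$ and $z_1,\dots,z_N\in X$ one has $\bignorm{\sum_{k=1}^{r}g_k\otimes z_k}_{G(X)}\le\bignorm{\sum_{k=1}^{N}g_k\otimes z_k}_{G(X)}$; indeed, with $A=\sum_{k\le r}g_kz_k$ and $B=\sum_{r<k\le N}g_kz_k$ the variables $A,B$ are independent and $\Edb B=0$, so conditioning on $A$ and using the convexity of $\norm{\cdot}^2$ gives $\Edb\norm{A+B}^2\ge\Edb\norm{A}^2$. \textbf{(ii)} \emph{Invariance under an orthonormal substitution:} if $v_1,\dots,v_p$ are orthonormal vectors in $\Cdb^{q}$ and $h_l=\sum_{k=1}^{q}(v_l)_k\,g_k$ for $1\le l\le p$, then $(h_1,\dots,h_p)$ has the same joint law as $(g_1,\dots,g_p)$ (unitary invariance of the standard Gaussian vector on $\Cdb^{q}$, followed by passing to a subfamily), hence $\bignorm{\sum_{l=1}^{p}h_l\otimes z_l}_{G(X)}=\bignorm{\sum_{l=1}^{p}g_l\otimes z_l}_{G(X)}$ for all $z_l\in X$.

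Since $\norm{a}_{M_{n,m}}\le 1$ we have $a^{*}a\le I_m$, so $c:=(I_m-a^{*}a)^{1/2}$ satisfies $c^{*}c=I_m-a^{*}a$, and therefore the $(n+m)\times m$ matrix $V$ whose top $n$ rows form $a$ and whose bottom $m$ rows form $c$ is an isometry: $V^{*}V=a^{*}a+c^{*}c=I_m$. In particular its columns $v_1,\dots,v_m$ are orthonormal in $\Cdb^{n+m}$, and since the top $n$ rows of $V$ form $a$ we have $(V\hat x)_i=y_i$ for $1\le i\le n$, where $\hat x=(x_1,\dots,x_m)\in X^{m}$. Set $N=n+m$ and $h_l=\sum_{i=1}^{N}(v_l)_i\,g_i=\sum_{i=1}^{N}V_{il}g_i$; regrouping $\sum_{i=1}^{N}g_i\otimes(V\hat x)_i=\sum_{i=1}^{N}g_i\otimes\sum_{l=1}^{m}V_{il}x_l=\sum_{l=1}^{m}h_l\otimes x_l$, we obtain
$$
\Bignorm{\sum_{i=1}^{n}g_i\otimes y_i}_{G(X)}\ \le\ \Bignorm{\sum_{i=1}^{N}g_i\otimes(V\hat x)_i}_{G(X)}\ =\ \Bignorm{\sum_{l=1}^{m}h_l\otimes x_l}_{G(X)}\ =\ \Bignorm{\sum_{j=1}^{m}g_j\otimes x_j}_{G(X)},
$$
where the first inequality is (i) (the first $n$ entries of $V\hat x$ being $y_1,\dots,y_n$), the middle equality is the regrouping above, and the last equality is (ii) applied to the orthonormal system $v_1,\dots,v_m$. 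This is exactly the required inequality.

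The only place where one does something rather than merely unwind definitions is the isometric extension of $a$ to $V$, which is the short computation just given; I do not anticipate any genuine difficulty there. A dilation-free alternative is to diagonalize $a$ by its singular value decomposition $a=\sum_{l}s_l\,f_l e_l^{*}$ with $0\le s_l\le 1$ and $(e_l),(f_l)$ orthonormal: substituting through (ii) via $(f_l)$ and regrouping reduces the problem to the scalar ``diagonal'' estimate $\bignorm{\sum_l s_l g_l\otimes u_l}_{G(X)}\le\bignorm{\sum_l g_l\otimes u_l}_{G(X)}$ for $|s_l|\le 1$ (which follows from (i) after writing $s_lg_l$ as a conditional expectation of an independent standard Gaussian), and then, substituting through (ii) via $(e_l)$ together with one more use of (i), to $\bignorm{\sum_j g_j\otimes x_j}_{G(X)}$. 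In any case the statement is classical and is contained, for instance, in \cite[Cor.~12.17]{DJT}.
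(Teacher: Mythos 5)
Your proof is correct. The paper gives no argument for this lemma, simply citing \cite[Cor.~12.17]{DJT}, and your route --- normalize, dilate $a$ to an isometry $V=\binom{a}{c}$ with $c=(I_m-a^{*}a)^{1/2}$, then use monotonicity under adjoining independent mean-zero terms together with the unitary invariance of the standard complex Gaussian vector --- is precisely the standard proof of that cited result, with both auxiliary facts (Jensen for the adjunction step, the covariance computation $V^{*}V=I_m$ for the substitution step) justified correctly.
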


This result does not remain true if we replace Gaussian variables
by Rademacher variables and this defect is the main reason why it
is sometimes easier to deal with $\gamma$-boundedness than with
$R$-boundedness.

An extremely useful property proved in \cite[Lem. 3.2]{CPSW} is
that if $F\subset B(X)$ is any $R$-bounded set, then its strongly closed
absolute convex hull $\overline{\rm aco}(F)$ is $R$-bounded as
well, with an estimate $R(\overline{\rm aco}(F))\leq 2 R(F)$. It
turns out that a similar property holds for $\gamma$-bounded sets
without the extra factor 2.

\begin{lemma}\label{2Conv}
Let $F\subset B(X)$ be any $\gamma$-bounded set. Then its closed
absolute convex hull $\overline{\rm aco}(F)$ with respect to the
strong operator topology is $\gamma$-bounded as well, and
$$
\gamma(\overline{\rm aco}(F))=\gamma(F).
$$
\end{lemma}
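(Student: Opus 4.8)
The plan is to prove the two inclusions of the equality $\gamma(\overline{\rm aco}(F))=\gamma(F)$. The inequality $\gamma(F)\leq\gamma(\overline{\rm aco}(F))$ is trivial since $F\subset\overline{\rm aco}(F)$, so the content is the reverse estimate $\gamma(\overline{\rm aco}(F))\leq\gamma(F)$. The strategy proceeds in three layers: first handle convex combinations, then the absolute (balanced) convex hull, then pass to the strong closure.

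\medskip
\noindent\textbf{Step 1: finite convex combinations.} Suppose $S_1,\ldots,S_n$ are each a convex combination $S_k=\sum_{\ell} \lambda_{k\ell} T_{k\ell}$ with $T_{k\ell}\in F$, $\lambda_{k\ell}\geq 0$, $\sum_\ell \lambda_{k\ell}=1$. Given $x_1,\ldots,x_n\in X$, I want to bound $\bignorm{\sum_k g_k\otimes S_k x_k}_{G(X)}$. The key idea is that a convex combination with weights $\lambda_{k\ell}$ can be realized as a conditional expectation: on an auxiliary probability space, choose independent random indices $\ell(k)$ with $\mathbb{P}(\ell(k)=\ell)=\lambda_{k\ell}$, and write $S_k x_k=\mathbb{E}\bigl[T_{k,\ell(k)}x_k\bigr]$. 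By Jensen's inequality applied to the convex function $\xi\mapsto\bignorm{\xi}_{G(X)}^2$ (or directly, since conditional expectation is a contraction on $L^2(\Sigma;X)$ and commutes with tensoring by the $g_k$), one gets
$$
\Bignorm{\sum_k g_k\otimes S_k x_k}_{G(X)}^2 \,\leq\, \mathbb{E}\,\Bignorm{\sum_k g_k\otimes T_{k,\ell(k)}x_k}_{G(X)}^2.
$$
For each fixed realization of the indices $\ell(\cdot)$, the inner term is bounded by $\gamma(F)^2\bignorm{\sum_k g_k\otimes x_k}_{G(X)}^2$ by the definition of $\gamma$-boundedness; taking expectations gives the bound with constant $\gamma(F)$. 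Thus the (ordinary, not yet balanced) convex hull of $F$ has $\gamma$-bound $\leq\gamma(F)$.

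\medskip
\noindent\textbf{Step 2: the absolute convex hull.} Elements of ${\rm aco}(F)$ have the form $\sum_\ell z_\ell T_\ell$ with $\sum_\ell|z_\ell|\leq 1$, $z_\ell\in\mathbb{C}$, $T_\ell\in F$. Here the crucial point, and where Lemma \ref{2Inv} enters, is rotational invariance of complex Gaussians: for any unimodular scalar $u$, the variable $u g_k$ has the same distribution as $g_k$. More precisely, if $S_k=\sum_\ell z_{k\ell}T_{k\ell}$, write $z_{k\ell}=|z_{k\ell}|\,u_{k\ell}$ with $|u_{k\ell}|=1$; then $\sum_\ell|z_{k\ell}|\leq 1$ so $(|z_{k\ell}|)_\ell$ is a sub-probability vector, which we may complete to a probability vector. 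Now apply Step 1 after absorbing the unimodular phases $u_{k\ell}$: one realizes $\sum_k g_k\otimes S_k x_k$ as an average over random choices of $(\ell(k))_k$ of terms $\sum_k g_k\otimes u_{k,\ell(k)}T_{k,\ell(k)}x_k$, and since $(g_k)$ and $(u_{k,\ell(k)}g_k)$ are equidistributed for each fixed realization (this is exactly the diagonal-unitary case of Lemma \ref{2Inv}, or just the definition of complex Gaussians), each such term has $G(X)$-norm at most $\gamma(F)\bignorm{\sum_k g_k\otimes x_k}_{G(X)}$. So ${\rm aco}(F)$ has $\gamma$-bound $\leq\gamma(F)$.

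\medskip
\noindent\textbf{Step 3: strong closure.} Finally, if $S\in\overline{\rm aco}(F)$ (strong operator topology), pick for $S_1,\ldots,S_n$ and fixed $x_1,\ldots,x_n$ nets in ${\rm aco}(F)$ converging strongly to $S_1,\ldots,S_n$ at the points $x_1,\ldots,x_n$; then $\sum_k g_k\otimes (S_k^{(\alpha)}x_k)\to\sum_k g_k\otimes(S_k x_k)$ in $L^2(\Sigma;X)$ (a finite sum of terms, each converging in norm), and the $G(X)$-norm is continuous, so the bound $\gamma({\rm aco}(F))\leq\gamma(F)$ from Step 2 passes to the limit. This gives $\gamma(\overline{\rm aco}(F))\leq\gamma(F)$ and hence equality.

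\medskip
\noindent The only genuinely delicate point is Step 2: one must be careful that the random-index realization and the unimodular phase absorption interact correctly, i.e.\ that for each fixed choice of indices the resulting Gaussian family is still a standard complex Gaussian family so the hypothesis $\gamma(F)$ applies verbatim. The absence of the factor $2$ (present in the Rademacher case of \cite{CPSW}) is precisely attributable to this clean rotational invariance of complex Gaussians, which fails for Rademachers — exactly the phenomenon already flagged after Lemma \ref{2Inv}. All other steps are routine convexity and continuity arguments.
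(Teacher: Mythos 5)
Your proof is correct and follows essentially the same route as the paper: the paper first absorbs the unimodular scalars via Lemma \ref{2Inv} applied to diagonal matrices (forming $\widetilde{F}=\{zT : T\in F,\ |z|\le 1\}$ with $\gamma(\widetilde{F})=\gamma(F)$, which is exactly your rotational-invariance step), then handles the convex hull and strong closure by citing the argument of \cite[Lem.\ 3.2]{CPSW}, which is precisely the random-index/Jensen randomization you spell out in Steps 1 and 3. The only difference is that you reproduce the cited convexity argument explicitly and reverse the order of the two reductions, which is immaterial.
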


\begin{proof} Consider the set
$$
\widetilde{F}=\bigl\{zT\, :\, T\in F,\ z\in\Cdb,\ \vert z\vert\leq
1\bigr\}.
$$
Applying Lemma \ref{2Inv} to diagonal matrices, we see that
$\widetilde{F}$ is $\gamma$-bounded and that
$\gamma(\widetilde{F})=\gamma(F)$. Moreover ${\rm aco}(F)$ is equal to 
${\rm co}(\widetilde{F})$, the
convex hull of $\widetilde{F}$. Hence the argument in \cite[Lem.
3.2]{CPSW} shows that ${\rm aco}(F)$ is $\gamma$-bounded and that
$\gamma({\rm aco}(F)) = \gamma(\widetilde{F})$. The result follows
at once.
\end{proof}

Let $Z$ be an arbitrary Banach space. Following \cite{KL}, we say
that a bounded linear map $v\colon Z\to B(X)$ is $\gamma$-bounded
(resp. $R$-bounded) if the set
$$
\bigl\{v(z)\,:\, z\in Z,\ \norm{z}\leq 1\bigr\}
$$
is $\gamma$-bounded (resp. $R$-bounded). In this case, we let
$\gamma(v)$ (resp. $R(v)$) denote the $\gamma$-bound (resp. the
$R$-bound) of the latter set.

Next we say that a representation $\pi\colon G\to B(X)$ is
$\gamma$-bounded (resp. $R$-bounded) if the set
$$
\bigl\{\pi(t)\,:\, t\in G\bigr\}
$$
is $\gamma$-bounded (resp. $R$-bounded). In this case, we let
$\gamma(\pi)$ (resp. $R(\pi)$) denote the $\gamma$-bound (resp.
the $R$-bound) of the latter set.

For any bounded representation $\pi\colon G\to B(X)$, we let
$\sigma_\pi\colon L^{1}(G)\to B(X)$ denote the associated bounded
homomorphism defined by
$$
\sigma_\pi(k)=\,\int_{G} k(t)\pi(t)\, dt,\qquad k\in L^{1}(G),
$$
where the latter integral in defined in the strong sense. It turns
out that $\sigma_\pi$ is nondegenerate, that is,
\begin{equation}\label{2Ess}
{\rm Span}\bigl\{\sigma_\pi(k)x\, :\, k\in L^{1}(G), \ x\in
X\bigr\}
\end{equation}
is dense in $X$. Moreover, for every nondegenerate bounded
homomorphism $\sigma\colon L^{1}(G)\to B(X)$, there exists a
unique representation $\pi\colon G\to B(X)$ such that
$\sigma=\sigma_\pi$, see \cite[Lem. 2.4 and Rem. 2.5]{DR2}.

\begin{lemma}\label{2Equiv} Let $\pi\colon G\to B(X)$ be a bounded representation.
Then $\pi$ is $\gamma$-bounded if and only if $\sigma_\pi$ is
$\gamma$-bounded. Moreover $\gamma(\pi)=\gamma(\sigma_\pi)$ in
this case.
\end{lemma}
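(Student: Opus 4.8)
The plan is to prove the two implications separately, and in both directions the bridge is the fact that $\pi(t) = \sigma_\pi(k_U)/\lvert U\rvert$ approximately, where $k_U$ runs over normalized indicators of small neighbourhoods $U$ of $t$. More precisely, $\sigma_\pi(\chi_U/\lvert U\rvert)x \to \pi(t)x$ strongly as $U$ shrinks to $\{t\}$, by strong continuity of $\pi$; this is the key approximation identity. Note that each $\chi_U/\lvert U\rvert$ has $L^1$-norm equal to $1$.

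\emph{Suppose $\sigma_\pi$ is $\gamma$-bounded.} Then for every neighbourhood $U$ of any point $t$, the operator $\sigma_\pi(\chi_U/\lvert U\rvert)$ lies in the $\gamma$-bounded set $\{\sigma_\pi(k) : \norm{k}_{L^1(G)}\leq 1\}$, whose $\gamma$-bound is $\gamma(\sigma_\pi)$. I would then pass to the strong limit: given $t_1,\dots,t_n\in G$ and $x_1,\dots,x_n\in X$, and $\varepsilon > 0$, choose neighbourhoods $U_j$ of $t_j$ small enough that $\norm{\sigma_\pi(\chi_{U_j}/\lvert U_j\rvert)x_j - \pi(t_j)x_j}$ is as small as we like. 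Since $\bignorm{\sum_k g_k\otimes \sigma_\pi(\chi_{U_k}/\lvert U_k\rvert)x_k}_{G(X)}\leq \gamma(\sigma_\pi)\bignorm{\sum_k g_k\otimes x_k}_{G(X)}$, and the left side converges to $\bignorm{\sum_k g_k\otimes \pi(t_k)x_k}_{G(X)}$ (the Gaussian sum is a finite sum, so strong convergence of each term suffices, using that the $G(X)$-norm is continuous), we get $\bignorm{\sum_k g_k\otimes \pi(t_k)x_k}_{G(X)}\leq \gamma(\sigma_\pi)\bignorm{\sum_k g_k\otimes x_k}_{G(X)}$. Hence $\pi$ is $\gamma$-bounded with $\gamma(\pi)\leq\gamma(\sigma_\pi)$.

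\emph{Conversely, suppose $\pi$ is $\gamma$-bounded.} Fix $k\in L^1(G)$ with $\norm{k}_{L^1(G)}\leq 1$. The idea is that $\sigma_\pi(k) = \int_G k(t)\pi(t)\,dt$ is, in the strong sense, an ``average'' of the operators $\pi(t)$ with complex weights $k(t)$ and total weight mass $\leq 1$; so $\sigma_\pi(k)$ should lie in the closed absolute convex hull $\overline{\rm aco}\{\pi(t) : t\in G\}$ with respect to the strong operator topology. To make this rigorous I would approximate the vector-valued (strong) integral $\int_G k(t)\pi(t)x\,dt$ by Riemann-type finite sums $\sum_i k(t_i)\,\lvert V_i\rvert\,\pi(t_i)x$ over a measurable partition $\{V_i\}$ of (a large compact piece of) $G$; each such finite sum is an element of ${\rm aco}\{\pi(t):t\in G\}$ because $\sum_i \lvert k(t_i)\rvert\,\lvert V_i\rvert$ approximates $\norm{k}_{L^1(G)}\leq 1$. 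Passing to the strong limit places $\sigma_\pi(k)x$ in the strong closure, uniformly over a single $x$; a standard diagonal/approximation argument over finitely many vectors $x_1,\dots,x_n$ then shows $\sigma_\pi(k)\in\overline{\rm aco}\{\pi(t):t\in G\}$. By Lemma \ref{2Conv}, this strongly closed absolute convex hull is $\gamma$-bounded with the same bound $\gamma(\pi)$, so $\sigma_\pi(k)$ lies in a $\gamma$-bounded set of bound $\gamma(\pi)$; since $k$ was arbitrary in the unit ball, $\sigma_\pi$ is $\gamma$-bounded with $\gamma(\sigma_\pi)\leq\gamma(\pi)$.

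Combining the two inequalities gives $\gamma(\pi)=\gamma(\sigma_\pi)$. The main obstacle is the second direction: one must carefully justify that the strongly-defined Bochner-type integral $\sigma_\pi(k)$ really belongs to the strong closure of ${\rm aco}\{\pi(t):t\in G\}$ when tested against several vectors simultaneously, since the Riemann sum partition depends on $x$; handling the complex weights $k(t)$ (rather than nonnegative ones) is exactly why we need the \emph{absolute} convex hull and hence Lemma \ref{2Conv} rather than a bare convexity argument. Everything else is routine approximation together with the continuity of the $G(X)$-norm on finite Gaussian sums.
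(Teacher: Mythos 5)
Your proof is correct, and its overall architecture matches the paper's: one direction via the strongly closed absolute convex hull together with Lemma \ref{2Conv}, the other via an approximation of $\pi(t)$ by $\sigma_\pi$ applied to norm-one kernels. The only genuine difference is in the latter direction. The paper approximates $\pi(t)$ by $\sigma_\pi(h_\iota*\delta_t)$, where $(h_\iota)$ is a contractive approximate identity of $L^1(G)$; this convergence is only established on the dense subspace $Y={\rm Span}\{\sigma_\pi(k)x\}$ coming from nondegeneracy, so the paper must finish with a density argument. You instead use the kernels $\chi_U/\lvert U\rvert$ for $U$ a shrinking (relatively compact, open, hence of positive finite Haar measure) neighbourhood of $t$, for which $\sigma_\pi(\chi_U/\lvert U\rvert)x\to\pi(t)x$ for \emph{every} $x\in X$ directly from the strong continuity of $\pi$; this bypasses $Y$ entirely and is slightly cleaner, at the cost of invoking local compactness to guarantee such $U$ exist. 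In the other direction you supply the justification (simple-function approximation of $k$ in $L^1$, plus the standard fact that a strong integral lies in the closed absolute convex hull of the integrand's range, tested simultaneously against finitely many vectors) that the paper simply asserts; your flag about the simultaneous approximation is the right thing to worry about and is handled by working in $X^n$ with the continuous map $t\mapsto(\pi(t)x_1,\ldots,\pi(t)x_n)$.
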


\begin{proof}
For any $k\in L^{1}(G)$ such that $\norm{k}_1\leq 1$, the operator
$\sigma_\pi(k)$ belongs to the strongly closed absolute convex hull of
$\{\pi(t)\, :\,t\in G\}$. Hence the `only if' part follows from
Lemma \ref{2Conv}, and we have $\gamma(\sigma_\pi)\leq
\gamma(\pi)$.

For the converse implication, we let $(h_\iota)_\iota$ be a
contractive approximate identity of $L^{1}(G)$. For any $t\in G$,
let $\delta_t$ denote the point mass at $t$. Then for any $k\in
L^{1}(G)$, and any $x\in X$, we have
\begin{align*}
\pi(t)\sigma_\pi (k) x \, & =\, \sigma_\pi(\delta_t * k)x\\
& =\,\lim_\iota \sigma_\pi(h_\iota *\delta_t * k)x\\
& =\,\lim_\iota \sigma_\pi(h_\iota *\delta_t)\sigma_\pi(k)x.
\end{align*}
Hence if we let $Y\subset X$ be the dense subspace defined by
(\ref{2Ess}), we have that
$$
\lim_\iota \sigma_\pi(h_\iota *\delta_t) y\,=\,\pi(t)y,\qquad y\in
Y, \ t\in G.
$$
Now assume that $\sigma_\pi$ is $\gamma$-bounded and let
$y_1,\ldots, y_n\in Y$ and $t_1,\ldots,  t_n\in G$. For any $\iota$
and any $k=1,\ldots,n$, we have $\norm{h_\iota *
\delta_{t_k}}_1\leq 1$. Hence
$$
\Bignorm{\sum_k g_k\otimes \sigma_\pi(h_\iota * \delta_{t_k})
y_k}_{G(X)}\,\leq\,\gamma(\sigma_\pi) \,\Bignorm{\sum_k g_k\otimes
y_k}_{G(X)}.
$$
Passing to the limit when $\iota\to\infty$, this yields
$$
\Bignorm{\sum_k g_k\otimes \pi(t_k)
y_k}_{G(X)}\,\leq\,\gamma(\sigma_\pi) \,\Bignorm{\sum_k g_k\otimes
y_k}_{G(X)}.
$$
Since $Y$ is dense in $X$, this implies that $\pi$ is
$\gamma$-bounded, with $\gamma(\pi)\leq\gamma(\sigma_\pi)$.
\end{proof}

Let $\lambda\colon G\to B(L^{2}(G))$ denote the left regular
representation. We recall that for any $f\in L^{2}(G)$,
$$
\lambda(t)f = \delta_t *f\qquad\hbox{and}\qquad
\sigma_\lambda(k)=k*f
$$
for any $t\in G$ and any $k\in L^{1}(G)$. The reduced
$C^*$-algebra of $G$ is defined as
$$
C^{*}_{\lambda}(G)=\overline{\sigma_\lambda\bigr(L^{1}(G)\bigr)}\,\subset
B(L^{2}(G)).
$$
We recall that $C^{*}_{\lambda}(G)$ is equal to the group
$C^{*}$-algebra $C^{*}(G)$ if and only if $G$ is amenable, see e.g. \cite[(4.21)]{Pat}. The
notion on which we will focus on in Section 4 and beyond is the following.

\begin{definition}\label{2Def} We say that a bounded representation
$\pi\colon G\to B(X)$ extends to a bounded homomorphism $w\colon
C^{*}_{\lambda}(G)\to B(X)$ if $w\circ\sigma_\lambda=\sigma_\pi$.
\end{definition}

Note that there exists a bounded operator $w\colon
C^{*}_{\lambda}(G)\to B(X)$ such that
$w\circ\sigma_\lambda=\sigma_\pi$ if and only if there is a
constant $C\geq 0$ such that
$$
\norm{\sigma_\pi(f)}\leq C\norm{\sigma_\lambda(f)},\qquad f\in
L^{1}(G),
$$
that this extension is unique and is
necessarily a homomorphism.

We refer the reader to \cite{DR2} for some results concerning
representations $\pi\colon G\to B(X)$ extending to an $R$-bounded
homomorphism $w\colon C^{*}_{\lambda}(G)\to B(X)$ in the case when
$G$ is abelian, and their relationships with $R$-bounded spectral
measures (see also Remark \ref{4Abelian}).

\medskip
\section{Multipliers on the Kalton-Weis $\ell$-spaces.}
We will need abstract Hilbert space valued Banach spaces, usually
called $\ell$-spaces, which were introduced by Kalton and Weis in
the unpublished paper \cite{KaW2}. These $\ell$-spaces allow to
define abstract square functions and were used in \cite{KaW2} to
deal with relationships between $H^{\infty}$ calculus and square
function estimates. Similar spaces are constructed in \cite{JLX}
for the same purpose, in the setting of noncommutative
$L^{p}$-spaces. Recently, $\ell$-spaces played an important role
in the development of vector valued stochastic integration (see in
particular \cite{VNVW1,VNVW2}) and for control theory in a Banach space setting
\cite{HK2}. In this section, we first recall some definitions and
basics of $\ell$-spaces, and then we develop specific properties
which will be useful in the next section.

Let $X$ be a Banach space and let $H$ be a Hilbert space. We let
$\bH$ denote the conjugate space of $H$. We will identify the
algebraic tensor product $\bH\otimes X$ with the subspace of
$B(H,X)$ of all bounded finite rank operators in the usual way.
Namely for any finite families $(\xi_k)_k$ in $H$ and $(x_k)_k$ in
$X$, we identify the element $\sum_k\overline{\xi_k}\otimes x_k$
with the operator $u\colon H\to X$ defined by letting
$u(\eta)=\sum_k\langle\eta,\xi_k\rangle x_k\,$ for any $\eta\in
H$.

For any $u\in \bH\otimes X$, there exists a finite orthonormal
family $(e_k)_k$ of $H$ and a finite family $(x_k)_k$ of $X$ such
that $u=\sum_k \overline{e_k}\otimes x_k$. Then we set
$$
\Gnorm{u}=\,\Bignorm{\sum_k g_k\otimes x_k}_{G(X)}.
$$
Using Lemma \ref{2Inv}, it is easy to check that this definition
does not depend on the $e_k$'s and $x_k$'s representing $u$. Next
for any $u\in B(H,X)$, we set
$$
\lnorm{u}=\,\sup\bigl\{ \Gnorm{uP}\, \big\vert \, P\colon H\to H \
\hbox{finite rank orthogonal projection}\, \bigr\}.
$$

Note that the above quantity may be infinite. Then 
we denote by $\ell_+(H,X)$ the space of all bounded operators
$u\colon H\to X$ such that $\lnorm{u}<\infty$. This is a Banach
space for the norm $\lnorm{\ }$. We let $\ell(H,X)$ denote the
closure of $\bH\otimes X$ in $\ell_+(H,X)$. It is observed in \cite{KaW2}
that $\ell(H,X)=\ell_+(H,X)$ provided that $X$ does not contain
$c_0$ (we will not use this fact in this paper).

\begin{proposition}\label{3Tensor} Let $S\in B(H)$.
\begin{itemize}
\item [(1)] For any finite rank operator $u\colon H\to X$, we have
$\Gnorm{u\circ S}\leq \Gnorm{u}\norm{S}$. \item [(2)] For any
$u\in\ell_+(H,X)$, the operator $u\circ S$ belongs to $\ell_+(H,X)$ and
$\lnorm{uS}\leq\lnorm{u}\norm{S}$.
\end{itemize}
\end{proposition}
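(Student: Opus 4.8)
The plan is to prove part (1) first by a direct computation on finite orthonormal families, and then deduce part (2) by approximation through finite rank projections. For part (1), write $u = \sum_{k} \overline{e_k} \otimes x_k$ for some finite orthonormal family $(e_k)$ in $H$ and $(x_k)$ in $X$. The composition $u \circ S$ is again a finite rank operator, and I would express it in terms of the matrix of $S$ relative to a suitable orthonormal system. Concretely, choose a finite orthonormal family $(f_j)$ in $H$ large enough that $S f_j$ lies in the span of the $(e_k)$ for each relevant $j$ (this requires a little care; it is cleanest to enlarge $(e_k)$ to an orthonormal basis of a finite-dimensional subspace containing both $\mathrm{span}(e_k)$ and its image under $S^*$, or to use that only the compression of $S$ to $\mathrm{span}(e_k)$ matters after composing with $u$). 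Writing $\langle S f_j, e_k \rangle = a_{kj}$, one gets $u \circ S = \sum_{k,j} a_{kj} \overline{f_j} \otimes x_k$ in the identification of $\bH \otimes X$ with finite rank operators, hence by definition $\Gnorm{u \circ S} = \bignorm{\sum_{k,j} a_{kj}\, g_j \otimes x_k}_{G(X)}$. Now Lemma~\ref{2Inv} applied to the matrix $a = [a_{kj}]$, whose norm in $M_{\cdot,\cdot}$ is at most $\norm{S}$ since it is (a submatrix of) the matrix of an operator of norm $\norm{S}$, yields $\Gnorm{u \circ S} \leq \norm{S}\, \bignorm{\sum_k g_k \otimes x_k}_{G(X)} = \norm{S}\, \Gnorm{u}$.

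For part (2), let $u \in \ell_+(H,X)$ and $S \in B(H)$, and let $P$ be an arbitrary finite rank orthogonal projection on $H$. I want to bound $\Gnorm{(uS)P}$. The trick is that $SP$ is a finite rank operator, so its range is finite-dimensional; let $Q$ be the orthogonal projection onto the (finite-dimensional) subspace $\overline{SP(H)}$, so that $SP = Q S P$. Then $(uS)P = u(SP) = (uQ)(SP)$, and $uQ$ is a finite rank operator with $\Gnorm{uQ} \leq \lnorm{u}$ by the very definition of $\lnorm{\ }$. Applying part (1) to the finite rank operator $uQ$ and the operator $SP \in B(H)$ gives $\Gnorm{(uS)P} = \Gnorm{(uQ)(SP)} \leq \Gnorm{uQ}\, \norm{SP} \leq \lnorm{u}\, \norm{S}$, using $\norm{SP} \leq \norm{S}$. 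Taking the supremum over all finite rank orthogonal projections $P$ shows $uS \in \ell_+(H,X)$ with $\lnorm{uS} \leq \lnorm{u}\,\norm{S}$.

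The main obstacle is purely bookkeeping in part (1): one must set up the orthonormal systems so that the matrix appearing is genuinely a submatrix (or compression) of the operator $S$ and therefore has norm at most $\norm{S}$, and so that the $G(X)$-norm identity with $u \circ S$ is exact rather than approximate. Once the finite-dimensional reduction is organized correctly, everything reduces to Lemma~\ref{2Inv}, which is exactly the statement tailored for this; there is no analytic subtlety since all operators in sight are finite rank at the point where the estimate is applied. It is worth noting in passing that this is precisely the place where $\gamma$-boundedness is more convenient than $R$-boundedness, since the analog of Lemma~\ref{2Inv} fails for Rademacher averages.
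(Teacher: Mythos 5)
Your argument is correct and is essentially the paper's own proof: part (1) reduces to Lemma \ref{2Inv} applied to the matrix of a compression of $S$ (the paper takes the auxiliary orthonormal family to be a basis of ${\rm Span}\{S^{*}e_k\}$, which is exactly the second option you describe --- note that your first formulation, asking that each $Sf_j$ lie in ${\rm Span}(e_k)$, is not achievable in general and should be dropped in favor of that alternative), and part (2) uses the same factorization $uSP=(uQ)(SP)$ with $Q$ the finite rank projection onto the range of $SP$, followed by the supremum over $P$. No further changes are needed.
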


\begin{proof} Part (1) is a straightforward consequence of Lemma \ref{2Inv}.
Indeed suppose that $u=\sum_i \overline{e_i}\otimes x_i$ for some
finite orthonormal family $(e_i)_i$ of $H$ and some $x_i\in X$.
Then if $(e'_j)_j$ is an orthonormal basis of ${\rm
Span}\{S^{*}(e_i)\, :\, i=1,\ldots,n\}$, we have
$$
u\circ S=\sum_{i,j}\langle e_i,S(e'_j)\rangle\,\overline{e'_j}\otimes
x_i\,.
$$
Hence
$$
\Gnorm{u\circ S} = \Bignorm{\sum_{i,j}\langle e_i, S(e'_j)\rangle\,
g_j\otimes x_i}_{G(X)}  \leq\bignorm{\langle e_i,
S(e'_j)\rangle}_{\ell^{2}\to\ell^{2}} \Bignorm{\sum_{i} g_i\otimes
x_i}_{G(X)}  \leq \norm{S} \Gnorm{u}.
$$
To prove (2), consider an arbitrary $u\colon H\to X$ and let
$P\colon H\to H$ be a finite rank orthogonal projection. Then $SP$
is finite rank hence there exists a finite rank orthogonal
projection $Q\colon H\to H$ such that $SP=QSP$. Applying the first
part of this proof to $uQ$, we infer that
$$
\Gnorm{uSP}=\Gnorm{uQQSP}\leq\Gnorm{uQ}\norm{QSP}\leq\lnorm{u}\norm{S}.
$$
The result follows by passing to the supremum over $P$.
\end{proof}

\begin{remark}\label{3Rem} \ 

(1) It is clear from above that for any finite rank $u\colon H\to X$, we have $\Gnorm{u}=\lnorm{u}$. 
More generally for any $u\colon H\to X$, we have $\lnorm{u}=\sup\{\Gnorm{uw}\}$, where
the supremum runs over all finite rank operators $w\colon H\to H$
with $\norm{w}\leq 1$.

(2) Let $S\in B(H)$ and let $\varphi_S \colon B(H,X)\to B(H,X)$ be defined by $\varphi_S(u)=u\circ S$. 
It is easy to check (left to the reader) that the  restriction of $\varphi_S$ to
$\bH\otimes X$ coincides with $\overline{S^{*}}\otimes I_{X}$.
\end{remark}

\bigskip
We will now focus on the case when $H=L^{2}(\Omega,\mu)$, for some
arbitrary measure space $(\Omega,\mu)$. We will identify $H$ and
$\bH$ in the usual way. We let $L^{2}(\Omega; X)$ be the
associated Bochner space and we recall that $L^{2}(\Omega)\otimes
X$ is dense in $L^{2}(\Omega; X)$. There is a natural embedding of
$L^{2}(\Omega; X)$ into $B(L^{2}(\Omega),X)$ obtained by
identifying any $F\in L^{2}(\Omega; X)$ with the operator
$$
u_F\colon f\,\longmapsto\,\int_{\Omega} F(t)f(t)\,d\mu(t)\,,\qquad
f\in L^{2}(\Omega).
$$
Thus we have the following diagram of embeddings, that we will use without any further reference.
For example, it will make sense through these identifications to compute
$\lnorm{F}$ for any $F\in L^{2}(\Omega; X)$.

\begin{displaymath}
\xymatrix{ & L^{2}(\Omega;X) \ar[rrd] & &  \\
L^{2}(\Omega)\otimes X  \ar[ru]\ar[rd] & & & B(L^{2}(\Omega),X)\\
& \ell(L^2(\Omega),X) \ar[r] & \ell_+(L^2(\Omega),X) \ar[ru] & }
\end{displaymath}

\medskip
By a subpartition of $\Omega$, we mean a finite
set $\theta =\{I_{1},\ldots,I_{m}\}$ of pairwise disjoint
measurable subsets of $\Omega$ such that $0<\mu(I_i)<\infty$ for
any $i=1,\ldots,m$. We will use the natural partial order on
subpartitions, obtained by saying that $\theta\leq\theta'$ if and
only if each set in $\theta$ is a union of some sets in $\theta'$.
For any subpartition $\theta =\{I_{1},\ldots,I_{m}\}$, we let
$E_\theta\colon L^{2}(\Omega)\to L^{2}(\Omega)$ be the orthogonal
projection defined by
$$
E_\theta(f)\,=\,\sum_{i=1}^{m}\frac{1}{\mu(I_i)}\Bigl(\int_{I_{i}}
f(t)\,d\mu (t)\,\Bigr)\, \chi_{I_{i}},\qquad f\in L^{2}(\Omega).
$$
It is plain that $\lim_{\theta\to\infty}\norm{E_\theta(f)-f}_2=0$
for any $f\in L^{2}(\Omega)$. Now let
$$
E_{\theta}^{X}\colon B(L^{2}(\Omega),X)\longrightarrow
L^{2}(\Omega)\otimes X
$$
be defined by $E_{\theta}^{X}(u)=uE_\theta$. Then the above
approximation property extends as follows.

\begin{lemma}\label{3Approx}\
\begin{itemize}
\item [(1)] For any $u\in\ell(L^{2}(\Omega),X)$,
$\lim_{\theta\to\infty}\lnorm{E_{\theta}^{X}(u) -u}=0$. \item
[(2)] For any $u\in L^{2}(\Omega;X)$,
$\lim_{\theta\to\infty}\norm{E_{\theta}^{X}(u)
-u}_{L^{2}(\Omega;X)}=0$.
\end{itemize}
\end{lemma}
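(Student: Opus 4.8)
The plan is to prove both statements by first establishing (2), the Bochner-space version, and then deducing (1) by a density-and-uniform-boundedness argument. For (2), note that $L^2(\Omega)\otimes X$ is dense in $L^2(\Omega;X)$ and that the maps $u\mapsto E^X_\theta(u)$ act on $L^2(\Omega;X)$ as $E_\theta\otimes I_X$ (this is the Bochner-space conditional-expectation operator associated to the subpartition $\theta$), hence are contractions on $L^2(\Omega;X)$, uniformly in $\theta$. Indeed, for $F=\sum_i \chi_{A_i}\otimes x_i$ one checks directly that $E^X_\theta(F)\to F$ in $L^2(\Omega;X)$, either by reducing to the scalar statement $\|E_\theta(f)-f\|_2\to 0$ applied coordinatewise, or by invoking the standard martingale/conditional-expectation convergence in Bochner spaces. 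Since these simple $F$ are dense and the operators $E^X_\theta$ are uniformly bounded, an $\varepsilon/3$ argument gives $\lim_\theta \|E^X_\theta(u)-u\|_{L^2(\Omega;X)}=0$ for every $u\in L^2(\Omega;X)$.

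For (1) the strategy is the same three-step scheme: uniform boundedness, convergence on a dense subset, and approximation. Uniform boundedness: for any $u\in\ell_+(L^2(\Omega),X)$ we have $\lnorm{E^X_\theta(u)}=\lnorm{uE_\theta}\leq\lnorm{u}\,\norm{E_\theta}=\lnorm{u}$ by Proposition \ref{3Tensor}(2), since $E_\theta$ is a projection; likewise $\lnorm{E^X_\theta(u)-u}\leq \lnorm{E^X_{\theta'}(E^X_\theta(u)-u)}$-type estimates let us control differences once we know convergence on finite-rank operators. Convergence on a dense subset: take $u=\sum_k \overline{f_k}\otimes x_k$ a finite-rank operator with $f_k\in L^2(\Omega)$; then $E^X_\theta(u)-u=\sum_k(\overline{E_\theta f_k-f_k})\otimes x_k$ — here one uses that $E_\theta$ is self-adjoint so $uE_\theta=\sum_k\overline{E_\theta^* f_k}\otimes x_k=\sum_k\overline{E_\theta f_k}\otimes x_k$. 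Choosing a single finite orthonormal system that (approximately) spans all the $f_k$ and all the $E_\theta f_k$ for $\theta$ large, and using $\norm{E_\theta f_k-f_k}_2\to 0$ together with Lemma \ref{2Inv} (to pass from a small perturbation of the coordinate vectors to a small perturbation of the $G$-norm), one gets $\Gnorm{E^X_\theta(u)-u}\to 0$, hence $\lnorm{E^X_\theta(u)-u}\to 0$ by Remark \ref{3Rem}(1).

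Finally, approximation: given an arbitrary $u\in\ell(L^2(\Omega),X)$ and $\varepsilon>0$, pick a finite-rank $v\in\bH\otimes X$ with $\lnorm{u-v}<\varepsilon$ (possible by definition of $\ell(L^2(\Omega),X)$ as the closure of $\bH\otimes X$). Then
$$
\lnorm{E^X_\theta(u)-u}\,\leq\,\lnorm{E^X_\theta(u-v)}+\lnorm{E^X_\theta(v)-v}+\lnorm{v-u}\,\leq\,2\varepsilon+\lnorm{E^X_\theta(v)-v},
$$
and the last term tends to $0$ by the dense case. Letting $\varepsilon\to 0$ gives (1).

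The main obstacle I anticipate is the convergence step for finite-rank $u$ in part (1): one must be a little careful that $E_\theta$ does not preserve the orthonormal system used to represent $u$, so the cleanest route is to fix $\delta>0$, choose a finite-dimensional subspace $K\subset L^2(\Omega)$ and $\theta_0$ so that for $\theta\geq\theta_0$ all the vectors $f_k$ and $E_\theta f_k$ lie within $\delta$ of $K$, represent everything in an orthonormal basis of $K$ (plus a small remainder), and then invoke Lemma \ref{2Inv} to absorb the remainder into the $G$-norm estimate. This requires knowing that $E_\theta K$ stays close to a fixed finite-dimensional space as $\theta\to\infty$, which follows because $E_\theta f\to f$ for each fixed $f$ and $K$ is finite-dimensional (so convergence is uniform on the unit ball of $K$). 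Everything else is routine $\varepsilon/3$ bookkeeping.
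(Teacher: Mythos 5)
Your proof is correct and follows essentially the same route as the paper: uniform contractivity of $E_\theta^X$ on $\ell(L^2(\Omega),X)$ via Proposition \ref{3Tensor}, convergence on the dense subspace of finite-rank elements, and an $\varepsilon/3$ argument (with the identical scheme for the Bochner case). The only remark is that the elaborate finite-dimensional approximation you anticipate in your last paragraph is unnecessary: once you write $E_\theta^X(u)-u=\sum_k\overline{(E_\theta f_k-f_k)}\otimes x_k$, the triangle inequality together with the rank-one identity $\lnorm{\overline{h}\otimes x}=\norm{h}_2\,\norm{x}$ already yields $\lnorm{E_\theta^X(u)-u}\leq\sum_k\norm{E_\theta f_k-f_k}_2\,\norm{x_k}\to 0$, with no need to compare orthonormal systems.
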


\begin{proof}
By Remark \ref{3Rem}, (2), the restriction of $E_{\theta}^{X}$ to
$L^{2}(\Omega)\otimes X$ coincides with $E_{\theta}\otimes I_X$,
hence (1) holds true if $u\in L^{2}(\Omega)\otimes X$. According
to Proposition \ref{3Tensor}, we have
$$
\bignorm{E_{\theta}^{X}\colon \ell(L^{2}(\Omega),X)\longrightarrow
\ell(L^{2}(\Omega),X)}\leq 1.
$$
Since $L^{2}(\Omega)\otimes X$ is dense in
$\ell(L^{2}(\Omega),X)$, part (1) follows by
equicontinuity. The proof of (2) is identical.
\end{proof}

\begin{lemma}\label{3Partition}
For any $u\in B(L^{2}(\Omega),X)$  and any subpartition
$\theta_{0}$ of $\Omega$,
$$
\lnorm{u}\,=\sup\bigl\{\Gnorm{uE_\theta}\, :\, \theta\ {\rm
subpartition\ of}\  \Omega,\ \theta\geq\theta_{0}\,\bigr\}.
$$
\end{lemma}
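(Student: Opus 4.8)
The plan is to establish the two inequalities separately; write $N$ for the supremum on the right-hand side. The inequality $N\leq\lnorm u$ is immediate from the definition of $\lnorm u$: each $E_\theta$ is itself a finite rank orthogonal projection on $L^{2}(\Omega)$, so $\Gnorm{uE_\theta}\leq\lnorm u$, and it suffices to take the supremum over the subpartitions $\theta\geq\theta_{0}$.

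For the reverse inequality I would use the defining formula $\lnorm u=\sup\{\Gnorm{uP}\}$, the supremum running over all finite rank orthogonal projections $P$ on $L^{2}(\Omega)$. Fix such a $P$ and choose an orthonormal basis $f_{1},\dots,f_{n}$ of its range, so that $P=\sum_{i}\overline{f_i}\otimes f_i$. The main idea is to \emph{split}, for a suitable $\theta\geq\theta_{0}$,
$$
uP\,=\,(uE_\theta)\circ P\;+\;u\circ\bigl((I-E_\theta)P\bigr),
$$
and to estimate each summand in the $\Gnorm{\,\cdot\,}$-norm. For the first summand one uses that $uE_\theta=E_{\theta}^{X}(u)$ is a genuine finite rank element of $L^{2}(\Omega)\otimes X$; Proposition~\ref{3Tensor}(1) then applies with no numerical loss and gives $\Gnorm{(uE_\theta)\circ P}\leq\Gnorm{uE_\theta}\,\norm P\leq N$. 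For the second summand, since the $f_i$ are orthonormal one has $(I-E_\theta)P=\sum_i\overline{f_i}\otimes(f_i-E_\theta f_i)$, hence $u\circ\bigl((I-E_\theta)P\bigr)=\sum_i\overline{f_i}\otimes u(f_i-E_\theta f_i)$ is already written in the form used to define $\Gnorm{\,\cdot\,}$, so that
$$
\Gnorm{u\circ\bigl((I-E_\theta)P\bigr)}\,=\,\Bignorm{\sum_{i} g_i\otimes u(f_i-E_\theta f_i)}_{G(X)}\,\leq\,\norm u\sum_{i=1}^{n}\norm{f_i-E_\theta f_i}.
$$
It remains to make the last sum arbitrarily small. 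Here I would note that the subpartitions $\theta\geq\theta_{0}$ are cofinal among all subpartitions — given finitely many subpartitions, the positive (hence finite) measure atoms of the finite algebra generated by their cells together with the cells of $\theta_{0}$ form a common refinement lying above $\theta_{0}$ — so the convergence $\norm{E_\theta f-f}_{2}\to 0$ recorded in the text persists when the limit is restricted to $\theta\geq\theta_{0}$. Thus, given $\varepsilon>0$, one can pick $\theta\geq\theta_{0}$ making the error term $<\varepsilon$, whence $\Gnorm{uP}\leq N+\varepsilon$; letting $\varepsilon\to 0$ and then taking the supremum over $P$ gives $\lnorm u\leq N$.

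The only real decision in the argument is to split $P$ rather than attempt to approximate it by a subprojection sitting inside the range of some $E_\theta$; the splitting is what makes Proposition~\ref{3Tensor}(1) usable, precisely because $uE_\theta$ truly lies in $L^{2}(\Omega)\otimes X$, and the orthonormality of $\{f_i\}$ is what reduces the error term to an ordinary Gaussian average that is controlled by the strong convergence $E_\theta\to I$. Everything else — the definition of $\lnorm{\,\cdot\,}$, the triangle inequality in $G(X)$, and the cofinality bookkeeping for the net $\{\theta\geq\theta_{0}\}$ — is routine.
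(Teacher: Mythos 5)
Your proof is correct and is essentially the paper's argument: the paper likewise fixes a finite rank orthogonal projection $P$ with orthonormal basis $(h_k)$ of its range, uses $E_\theta h_k\to h_k$ to get $\Gnorm{uP}=\lim_\theta\Gnorm{uE_\theta P}$, and bounds $\Gnorm{uE_\theta P}\leq\Gnorm{uE_\theta}\norm{P}$ via Proposition~\ref{3Tensor}. Your explicit splitting $uP=(uE_\theta)P+u((I-E_\theta)P)$ with the triangle-inequality error estimate is just a quantified form of that same limit, so no substantive difference.
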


\begin{proof}
Let $P\colon L^2(\Omega)\to L^2(\Omega)$ be a finite rank orthogonal projection, and
let $(h_1,\ldots,h_n)$ be an orthonormal basis of its range. Then
$$
uP=\sum_k h_k\otimes u(h_k)\qquad\hbox{and}\qquad u
E_\theta P =\sum_k h_k \otimes uE_\theta (h_k)
$$
for any subpartition $\theta$. Since $E_\theta(h_k)\to h_k$ for
any $k=1,\ldots,n$, we deduce that
$$
\Gnorm{uP}\,=\,\lim_{\theta\to\infty}\Gnorm{uE_\theta P}.
$$
By Proposition \ref{3Tensor}, this implies that $\Gnorm{uP}\leq
\sup_{\theta\geq\theta_0}\Gnorm{uE_\theta}$ and the result follows at once.
\end{proof}

Let $\phi\colon \Omega\to B(X)$ be a bounded strongly measurable
function. We may define a multiplication operator $T_\phi\colon
L^{2}(\Omega;X)\to L^{2}(\Omega;X)$ by letting
$$
\bigl[T_\phi(F)\bigr](t)= \phi(t)F(t),\qquad F\in L^{2}(\Omega;X).
$$
Consider the associated bounded set
\begin{equation}\label{3F}
F_\phi \,=\,\biggl\{\frac{1}{\mu(I)}\,\int_{I}\phi(t)\, d\mu(t)\
:\, I\subset \Omega,\ 0<\mu(I)<\infty\,\biggr\}.
\end{equation}
The following is an analog of \cite[Prop. 4.4]{JLX} and extends \cite[Prop. 4.11]{KaW2}.

\begin{proposition}\label{3Mult} If the set $F_\phi$ is $\gamma$-bounded,
there exists a (necessarily unique) bounded operator
$$
M_\phi\colon
\ell(L^{2}(\Omega),X)\longrightarrow\ell_{+}(L^{2}(\Omega),X),
$$
such that $M_\phi$ and $T_\phi$ coincide on the intersection
$\ell(L^{2}(\Omega),X)\cap L^{2}(\Omega;X)$. Moreover we have
$$
\norm{M_\phi}\leq\gamma(F_\phi).
$$
\end{proposition}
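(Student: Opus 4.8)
The plan is to establish a norm estimate on the dense subspace $L^{2}(\Omega)\otimes X$ of $\ell(L^{2}(\Omega),X)$ and then extend by density. The key point is that on a simple tensor supported on a subpartition, the operator $T_\phi$ essentially gets replaced by the averaged operators $\frac{1}{\mu(I)}\int_{I}\phi\,d\mu$, which lie in $F_\phi$, so $\gamma$-boundedness of $F_\phi$ controls the $G$-norm. First I would take $F\in L^{2}(\Omega)\otimes X$, and using Lemma~\ref{3Approx}(2) (for the $L^{2}(\Omega;X)$-norm) together with Lemma~\ref{3Partition}, compute $\lnorm{T_\phi F}$ as a supremum over subpartitions $\theta$ of $\Gnorm{(T_\phi F)E_\theta}$. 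For a fixed subpartition $\theta=\{I_1,\dots,I_m\}$ refining the support of $F$, I would write $F=\sum_i \chi_{I_i}\otimes x_i$ (after refining) and check that $(T_\phi F)E_\theta = \sum_i \frac{\chi_{I_i}}{\mu(I_i)^{1/2}}\otimes\bigl(\frac{1}{\mu(I_i)}\int_{I_i}\phi(t)\,d\mu(t)\bigr)x_i$ once the $\chi_{I_i}/\mu(I_i)^{1/2}$ are recognized as an orthonormal family in $L^{2}(\Omega)$; similarly $F E_\theta=\sum_i\frac{\chi_{I_i}}{\mu(I_i)^{1/2}}\otimes x_i$. Then by the definition of $\Gnorm{\cdot}$ and the $\gamma$-boundedness of $F_\phi$,
$$
\Gnorm{(T_\phi F)E_\theta}=\Bignorm{\sum_i g_i\otimes \Bigl(\tfrac{1}{\mu(I_i)}\int_{I_i}\phi\,d\mu\Bigr)x_i}_{G(X)}\leq\gamma(F_\phi)\,\Bignorm{\sum_i g_i\otimes x_i}_{G(X)}=\gamma(F_\phi)\,\Gnorm{F E_\theta}.
$$

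Taking the supremum over $\theta\geq\theta_0$ for a suitable initial subpartition $\theta_0$ adapted to $F$, and applying Lemma~\ref{3Partition} on both sides, this gives $\lnorm{T_\phi F}\leq\gamma(F_\phi)\,\lnorm{F}$ for all $F\in L^{2}(\Omega)\otimes X$. Since $L^{2}(\Omega)\otimes X$ is dense in $\ell(L^{2}(\Omega),X)$ by definition, and $\ell_{+}(L^{2}(\Omega),X)$ is a Banach space, this bounded map extends uniquely to a bounded operator $M_\phi\colon\ell(L^{2}(\Omega),X)\to\ell_{+}(L^{2}(\Omega),X)$ with $\norm{M_\phi}\leq\gamma(F_\phi)$. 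It remains to verify that $M_\phi$ agrees with $T_\phi$ on all of $\ell(L^{2}(\Omega),X)\cap L^{2}(\Omega;X)$, not merely on the tensor product: for $F$ in this intersection, approximate $F$ in $L^{2}(\Omega;X)$-norm by $E_\theta^X(F)\in L^{2}(\Omega)\otimes X$ using Lemma~\ref{3Approx}(2), note $T_\phi E_\theta^X(F)=T_\phi(F)E_\theta=E_\theta^X(T_\phi F)\to T_\phi F$ in $L^{2}(\Omega;X)$; on the other hand $E_\theta^X(F)\to F$ in $\ell$-norm by Lemma~\ref{3Approx}(1), so $M_\phi E_\theta^X(F)\to M_\phi F$ in $\ell_+$-norm. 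Since both limits take place inside $B(L^{2}(\Omega),X)$ (in the respective stronger topologies, each of which dominates, say, the weak operator topology), the two limits coincide and $M_\phi F=T_\phi F$.

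The main obstacle I expect is the bookkeeping in the orthogonal-projection step: one has to be careful that for $F\in L^{2}(\Omega)\otimes X$ there is a single subpartition $\theta_0$ with respect to which $F$ is "block constant", so that for all $\theta\geq\theta_0$ the expressions for $FE_\theta$ and $(T_\phi F)E_\theta$ simplify cleanly — in particular the coefficients of $T_\phi F$ along the finer blocks are exactly the averages $\frac{1}{\mu(J)}\int_J\phi\,d\mu$ over subsets $J\subset I_i$, which is what keeps everything inside $F_\phi$. A secondary subtlety is confirming the compatibility claim on the full intersection $\ell\cap L^{2}(\Omega;X)$ rather than just on $L^{2}(\Omega)\otimes X$; this is handled by the double approximation argument above, using that both $\ell$-norm and $L^{2}(\Omega;X)$-norm convergence imply convergence in $B(L^{2}(\Omega),X)$ pointwise on $L^{2}(\Omega)$, so the two limit operators must agree. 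Uniqueness of $M_\phi$ is immediate from density of $L^{2}(\Omega)\otimes X$.
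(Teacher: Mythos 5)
Your proof is essentially the paper's own argument: restrict to simple tensors $F=\sum_i\chi_{I_i}\otimes x_i$, identify $(T_\phi F)E_\theta$ with $\sum_i\chi_{I_i}\otimes T_i(x_i)$ where each $T_i=\mu(I_i)^{-1}\int_{I_i}\phi\,d\mu$ lies in $F_\phi$, apply $\gamma$-boundedness after renormalizing the $\chi_{I_i}$, take the supremum over refinements via Lemma \ref{3Partition}, extend by density, and settle the agreement on $\ell(L^{2}(\Omega),X)\cap L^{2}(\Omega;X)$ through $E_\theta^X$ and Lemma \ref{3Approx}. Three small repairs: your displayed identities drop the weights $\mu(I_i)^{1/2}$ that arise when the $\chi_{I_i}$ are replaced by the orthonormal family $\chi_{I_i}/\mu(I_i)^{1/2}$ (harmless, since the same weights occur on both sides of the $\gamma$-boundedness inequality and can be absorbed into the $x_i$); the asserted commutation $T_\phi E_\theta^X(F)=E_\theta^X(T_\phi F)$ is false in general (one has $\phi(t)\int_I F\,d\mu\neq\int_I\phi F\,d\mu$), but it is also unnecessary, because $T_\phi E_\theta^X(F)\to T_\phi(F)$ in $L^{2}(\Omega;X)$ follows directly from Lemma \ref{3Approx}(2) and the boundedness of $T_\phi$ on $L^{2}(\Omega;X)$; and the dense subspace on which the estimate is actually established is $\E\otimes X$ (simple functions tensored with $X$) rather than all of $L^{2}(\Omega)\otimes X$, which is exactly the point you flag at the end and which causes no real difficulty.
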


\begin{proof}
Let $\E\subset L^{2}(\Omega)$ be the dense subspace of all simple
functions and let $u\in\E\otimes X$. There exists a subpartition
$\theta_{0} =(A_1,\ldots, A_N)$ and some $x_1,\ldots, x_N$ in $X$
such that 
$$
u=\sum_j \chi_{A_j}\otimes x_j\,. 
$$
Let
$\theta=(I_{1},\ldots, I_{m})$ be another subpartition and assume
that $\theta_{0}\leq\theta$. Thus there exist
$\alpha_{ij}\in\{0,1\}$ for $i=1,\ldots,m$ and $j=1,\ldots,N$ such
that $\chi_{A_{j}}=\sum_i\alpha_{ij}\chi_{I_{i}}\,$ for any $j$.
Consequently, we have
$$
u=\sum_{i,j}\alpha_{ij}\chi_{I_{i}}\otimes
x_j\qquad\hbox{and}\qquad \bigl[T_\phi(u)\bigr](t)\,=\,
\sum_{i,j}\alpha_{ij}\chi_{I_{i}}(t)\phi(t) x_j.
$$
For any $i=1,\ldots,m$, let
$$
T_i=\,\frac{1}{\mu(I_i)}\,\int_{I_i}\phi(t)\, d\mu(t)\,.
$$
Then a thorough look at the definition of $E_\theta^X$ shows that
$$
E_\theta^X\bigl(T_\phi(u))=\,\sum_{i,j}\alpha_{ij}\,\chi_{I_{i}} \otimes
T_i(x_j).
$$
Since $\bigl(\mu(I_i)^{-\frac{1}{2}}\chi_{I_{i}}\bigr)_i$ is an
orthonormal family of $L^{2}(\Omega)$, this implies that
$$
\bigGnorm{E_\theta^X\bigl(T_\phi(u))}\,=
\,\Bignorm{\sum_{i,j}\alpha_{ij}\mu(I_i)^{\frac{1}{2}} g_i\otimes
T_i(x_j)}_{G(X)}.
$$
Likewise,
$$
\Gnorm{u}= \Bignorm{\sum_{i,j}\alpha_{ij}\mu(I_i)^{\frac{1}{2}}
g_i\otimes x_j}_{G(X)}.
$$
Since each $T_i$ belongs to the set $F_\phi$, this implies that
$\bigGnorm{E_\theta^X\bigl(T_\phi(u))}\leq\,\gamma(F_\phi)\Gnorm{u}$.
Taking the supremum over $\theta$ and applying Lemma
\ref{3Partition}, we obtain that
$T_\phi(u)\in\ell_{+}(L^{2}(\Omega),X)$, with
$$
\lnorm{T_\phi(u)}\leq \gamma(F_\phi)\Gnorm{u}.
$$
This induces a bounded operator $M_\phi\colon
\ell(L^{2}(\Omega),X)\to \ell_{+}(L^{2}(\Omega),X)$ coinciding
with $T_\phi$ on $\E\otimes X$ and verifying $\norm{M_\phi}\leq
\gamma(  F_\phi)$.

To show that $M_\phi$ and $T_\phi$ coincide on
$\ell(L^{2}(\Omega),X)\cap L^{2}(\Omega;X)$, let $u$ belong to
this intersection and note that by construction,
$M_\phi(E_\theta^X(u))=T_\phi(E_\theta^X(u))$ for any subpartition
$\theta$. Then the equality $M_\phi(u)=T_\phi(u)$ follows from
Lemma \ref{3Approx}.
\end{proof}

\bigskip
In the rest of this section, we consider natural tensor extensions
of the spaces and multipliers considered so far. Let $N\geq 1$ be
a fixed integer and let $(e_1,\ldots,e_N)$ denote the canonical
basis of $\ell^{2}_N$. We let $\ell^{2}_N\h L^{2}(\Omega)$ be the
Hilbert space tensor product of $\ell^{2}_N$ and $L^{2}(\Omega)$.
For any bounded operator $u\colon\ell^{2}_N\h L^{2}(\Omega)\to X$
and any $k=1,\ldots, N$, let $u_k\colon L^{2}(\Omega)\to X$ be
defined by $u_k(f)= u(e_k\otimes f)$. Then the mapping $u\mapsto
\sum_k e_k\otimes u_k$ induces an algebraic isomorphism
\begin{equation}\label{3Ident}
B\bigl(\ell^{2}_N \h L^{2}(\Omega),X\bigr)\,\simeq\,\ell^{2}_N
\otimes B(L^{2}(\Omega),X).
\end{equation}
Let us now see the effects of this isomorphism on the special spaces considered so far.
Let $\Omega_N=\Omega\times\{1,\ldots,N\}$, so that we have a natural isometric isomorphism
$$
\ell^{2}_N\h
L^{2}(\Omega)=L^{2}(\Omega_N).
$$
Then it is clear that under the identification
(\ref{3Ident}), an operator $u\colon L^{2}(\Omega_N)\to X$ belongs
to $L^{2}(\Omega_N;X)$ if and only if $u_k$ belongs to $L^{2}(\Omega;X)$ for
any $k=1,\ldots, N$. Moreover this induces an isometric isomorphism
identification
$$
L^{2}(\Omega_N;X)\,=\,\ell^{2}_N\bigl(L^{2}(\Omega;X)\bigr).
$$
Likewise it is easy to check (left to the reader) that $u\colon L^{2}(\Omega_N)\to X$ belongs
to $\ell_{+}(L^{2}(\Omega_N),X)$ (resp. $\ell(L^{2}(\Omega_N),X)$)
if and only if $u_k$ belongs to $\ell_{+}(L^{2}(\Omega),X)$
(resp. $\ell(L^{2}(\Omega),X)$) for any $k=1,\ldots, N$, which
leads to algebraic isomorphisms
\begin{equation}\label{3ell}
\ell_{+}(L^{2}(\Omega_N),X)\,\simeq\,\ell^{2}_N\otimes
\ell_{+}(L^{2}(\Omega),X)\quad\hbox{and}\quad
\ell(L^{2}(\Omega_N),X)\,\simeq\,\ell^{2}_N\otimes
\ell(L^{2}(\Omega),X).
\end{equation}
Now let $\phi\colon\Omega\to B(X)$ be a bounded strongly measurable function as before and let
$\phi_N\colon \Omega_N\to B(X)$ be defined by
$$
\phi_N(t,k)=\phi(t),\qquad t\in \Omega,\ k=1,\ldots,N.
$$
As in (\ref{3F}), we may associate a set $F_{\phi_N}\subset B(X)$ to
$\phi_N$. A moment's thought shows that  $F_\phi\subset F_{\Phi_N}\subset{\rm co}(F_\phi)$. Hence   $F_{\phi_N}$ is $\gamma$-bounded if and
only if $F_{\phi}$ is $\gamma$-bounded and we have
$$
\gamma\bigl(F_{\phi_N}\bigr)= \gamma\bigl(F_\phi\bigr)
$$
in this case. It is clear that under the identifications (\ref{3ell}), 
the associated multiplier operator $M_{\phi_N}\colon \ell(L^{2}(\Omega_N),X)\to \ell_{+}(L^{2}(\Omega_N),X)$ satisfies
\begin{equation}\label{3M}
M_{\phi_N} = I_{\ell^{2}_N}\otimes M_\phi.
\end{equation}

\medskip
\section{Characterization of $\gamma$-bounded representations of amenable groups}
Thoughout we let $G$ be a locally compact group equipped with a
left Haar measure and for any measurable $I\subset G$, we simply
let $\vert I\vert$ denote the measure of $I$. If $\pi\colon G\to
B(X)$ is any bounded representation and $\norm{\pi}=\sup_{t\in
G}\norm{\pi(t)}$, it is plain that for any $I\subset G$ and any
$z\in X$, we have 
$$
\norm{\pi}^{-1}\vert I \vert^{\frac{1}{2}}\norm{z}\leq
\bigl(\int_{I}\norm{\pi(t)z}^{2}\, dt\,\bigr)^{\frac{1}{2}}\leq
\norm{\pi}\vert I \vert^{\frac{1}{2}}\norm{z}.
$$
The first part of the
following lemma is an analog of this double estimate when the
space $L^{2}(G;X)$ is replaced by $\ell_{+}(L^{2}(G),X)$. In the second part,
we apply the principles explained at the end of the previous section.

\begin{lemma}\label{4MainLemma} Let $\pi\colon G\to B(X)$ be a 
$\gamma$-bounded representation and
let $I\subset G$ be any measurable subset of $G$ with finite measure.
\begin{itemize}
\item [(1)] For any $z\in X$, the function $t\mapsto
\chi_{I}(t)\pi(t)z$ belongs to $\ell_{+}(L^{2}(G),X)$ and we have
$$
\gamma(\pi)^{-1}\vert I
\vert^{\frac{1}{2}}\norm{z}\leq\,\biglnorm{t\mapsto
\chi_{I}(t)\pi(t)z}\,\leq \gamma(\pi) \vert I
\vert^{\frac{1}{2}}\norm{z}.
$$
\item [(2)] Let $N\geq 1$ be an integer. Let $z_1,\ldots, z_N\in
X$ and let $F_k(t)= \chi_{I}(t)\pi(t)z_k$ for any $k=1,\ldots,N$.
Then
$$
\gamma(\pi)^{-1}\vert I \vert^{\frac{1}{2}}\BigGnorm{\sum_k
e_k\otimes z_k}\leq\,\Biglnorm{\sum_k e_k\otimes F_k}\,\leq
\gamma(\pi) \vert I \vert^{\frac{1}{2}}\BigGnorm{\sum_k e_k\otimes
z_k}.
$$
\end{itemize}
\end{lemma}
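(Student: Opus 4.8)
The plan is to obtain both estimates from Proposition \ref{3Mult}, applied to $\phi=\pi$ and to the ``reciprocal'' function $\psi(t):=\pi(t)^{-1}=\pi(t^{-1})$, both regarded as bounded strongly measurable functions $G\to B(X)$. Since $t\mapsto t^{-1}$ is a bijection of $G$, every element of the sets $F_\pi$ and $F_\psi$ defined by (\ref{3F}) is a Bochner average of operators taken from $\{\pi(t):t\in G\}$, hence lies in $\overline{\rm aco}\{\pi(t):t\in G\}$; by Lemma \ref{2Conv} this hull is $\gamma$-bounded with $\gamma$-bound $\gamma(\pi)$, so $F_\pi$ and $F_\psi$ are $\gamma$-bounded with bound at most $\gamma(\pi)$. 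Proposition \ref{3Mult} therefore yields bounded operators $M_\pi,M_\psi\colon\ell(L^{2}(G),X)\to\ell_+(L^{2}(G),X)$ of norm at most $\gamma(\pi)$, each coinciding with the corresponding pointwise multiplication $T_\pi$, $T_\psi$ on $\ell(L^{2}(G),X)\cap L^{2}(G;X)$.

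Fix $z\in X$ and set $F(t)=\chi_I(t)\pi(t)z$ and $F_0=\chi_I\otimes z$. Then $F_0$ is a finite rank operator $L^2(G)\to X$, so $F_0\in L^{2}(G)\otimes X\subset\ell(L^{2}(G),X)\cap L^{2}(G;X)$, and clearly $T_\pi(F_0)=F$. Hence $F=M_\pi(F_0)\in\ell_+(L^{2}(G),X)$, and since $\lnorm{F_0}=\Gnorm{F_0}=\vert I\vert^{1/2}\norm{z}$ (Remark \ref{3Rem}(1)) we obtain $\lnorm{F}\leq\gamma(\pi)\,\vert I\vert^{1/2}\norm{z}$, which is the right-hand inequality in (1).

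For the left-hand inequality the obvious move would be to apply $M_\psi$ to $F$ and use $T_\psi(F)=F_0$; the difficulty, which is the main point of the proof, is that $F$ need not belong to $\ell(L^{2}(G),X)$ (only to $\ell_+$), so $M_\psi$ is not available on $F$ itself. I would circumvent this by approximation together with a lower semicontinuity argument. For any subpartition $\theta$ of $G$ the operator $u_FE_\theta=E_\theta^X(F)$ is finite rank, hence lies in $\ell(L^{2}(G),X)\cap L^{2}(G;X)$, so $M_\psi(u_FE_\theta)=T_\psi(u_FE_\theta)$ and, using Lemma \ref{3Partition} and Remark \ref{3Rem}(1),
$$
\lnorm{T_\psi(u_FE_\theta)}\,\leq\,\gamma(\pi)\,\lnorm{u_FE_\theta}\,=\,\gamma(\pi)\,\Gnorm{u_FE_\theta}\,\leq\,\gamma(\pi)\,\lnorm{F}.
$$
By Lemma \ref{3Approx}(2), $u_FE_\theta\to F$ in $L^{2}(G;X)$, and $T_\psi$ is bounded on $L^{2}(G;X)$, so $T_\psi(u_FE_\theta)\to T_\psi(F)=F_0$ in $L^{2}(G;X)$. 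Finally $\lnorm{\ }$ is lower semicontinuous for convergence in $L^{2}(G;X)$: if $G_\alpha\to G$ in $L^{2}(G;X)$, then for each fixed finite rank orthogonal projection $P$ with orthonormal range $(h_1,\dots,h_n)$ one has $\int G_\alpha h_k\to\int G h_k$ in $X$ for every $k$ (Cauchy--Schwarz), whence $\Gnorm{u_{G_\alpha}P}\to\Gnorm{u_GP}$, and taking the supremum over $P$ gives $\lnorm{G}\leq\liminf_\alpha\lnorm{G_\alpha}$. Applied to the net $\theta\mapsto T_\psi(u_FE_\theta)$ this yields $\lnorm{F_0}\leq\gamma(\pi)\lnorm{F}$, i.e. $\vert I\vert^{1/2}\norm{z}\leq\gamma(\pi)\lnorm{F}$, and (1) follows.

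For (2) one repeats the whole argument on the measure space $\Omega_N=G\times\{1,\dots,N\}$, with $\phi_N$ as in Section 3 and $\psi_N(t,k):=\pi(t)^{-1}$. By the discussion following (\ref{3M}), $F_{\phi_N}$ and $F_{\psi_N}$ are $\gamma$-bounded with bound $\gamma(\pi)$, so $M_{\phi_N},M_{\psi_N}\colon\ell(L^{2}(\Omega_N),X)\to\ell_+(L^{2}(\Omega_N),X)$ have norm at most $\gamma(\pi)$. Writing $F_k(t)=\chi_I(t)\pi(t)z_k$ and $G_0=\sum_k e_k\otimes(\chi_I\otimes z_k)$, one has $T_{\phi_N}(G_0)=\sum_k e_k\otimes F_k$ and $T_{\psi_N}\bigl(\sum_k e_k\otimes F_k\bigr)=G_0$, while $G_0$ is a finite rank operator on $\ell^{2}_N\h L^{2}(G)$ with $\lnorm{G_0}=\Gnorm{G_0}=\vert I\vert^{1/2}\bigGnorm{\sum_k e_k\otimes z_k}$, since the vectors $e_k\otimes\chi_I$ are pairwise orthogonal of norm $\vert I\vert^{1/2}$. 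The two inequalities of (2) then follow exactly as in (1), the lower one again via Lemma \ref{3Approx}(2) on $L^{2}(\Omega_N;X)$ and the lower semicontinuity of $\lnorm{\ }$. (Alternatively, (2) can be deduced directly from (1) through the identifications (\ref{3ell}) and the relation $M_{\phi_N}=I_{\ell^{2}_N}\otimes M_\phi$ of (\ref{3M}).)
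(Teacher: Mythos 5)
Your proof is correct, and the upper estimate is obtained exactly as in the paper (via Proposition \ref{3Mult} applied to $\phi=\pi$ and the orthonormality of the $e_k\otimes\chi_I$). For the lower estimate, however, you take a genuinely different route. The paper argues by duality: it picks Hahn--Banach functionals $\varphi_k$ realizing $\Gnorm{\sum_k e_k\otimes z_k}$, writes $\vert I\vert\,\Gnorm{\sum_k e_k\otimes z_k}$ as a limit of pairings $J_\theta$ against $E_\theta^X(F_k)$, and bounds $\vert J_\theta\vert$ using the $\gamma$-boundedness of the averages $S_i$ of $\pi(t^{-1})$ together with the matrix contraction of Lemma \ref{2Inv}. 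You instead exploit the fact that $F$ is the image of $F_0=\chi_I\otimes z$ under the multiplier $M_\pi$ and that the inverse multiplier $M_\psi$, $\psi(t)=\pi(t^{-1})$, is also bounded by $\gamma(\pi)$ (your justification that $F_\psi\subset\overline{\rm aco}\{\pi(s)\}$ is the same convexity fact the paper uses for the $S_i$). The only obstacle --- that $F$ need not lie in the domain $\ell(L^2(G),X)$ of $M_\psi$ --- is handled by applying $M_\psi$ to the finite rank approximants $u_FE_\theta$ and invoking lower semicontinuity of $\lnorm{\,\cdot\,}$ under convergence in $L^2(G;X)$, which you verify correctly. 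This is arguably more conceptual and symmetric (it makes transparent why the two-sided estimate holds and why Remark \ref{4op} is automatic), at the cost of the extra semicontinuity lemma; the paper's duality argument avoids that lemma but requires the somewhat ad hoc matrix manipulation with the operator $a$. Both yield the same constant $\gamma(\pi)$ on each side. Your treatment of part (2) by rerunning the argument on $\Omega_N$ is fine; only the parenthetical claim that (2) follows ``directly'' from (1) via (\ref{3ell}) and (\ref{3M}) is too quick, since the $\ell$-norm of $\sum_k e_k\otimes F_k$ is not determined by the individual $\lnorm{F_k}$ --- but as you present it only as an aside, it does not affect the proof.
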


\begin{proof} Part (1) is a special case of part (2) so we only need to
prove the second statement. The upper estimate 
is a simple consequence of Proposition \ref{3Mult} applied with $\pi=\phi$, 
and the discussion at the end of Section 3. Indeed, let
$F_{\pi}$ be the set associated with $\pi\colon G\to B(X)$ as in
(\ref{3F}). For any $I\subset G$ with $0<\vert I\vert<\infty$, the
operator $\vert I\vert^{-1}\int_{I}\pi(t)\, dt\,$ belongs to the
strong closure of the absolute convex hull of $\{\pi(t)\,:\, t\in
G\}$. Hence $\gamma(F_\pi)\leq\gamma(\pi)$ by Lemma \ref{2Conv}.
Let
$$
M_\pi\colon \ell(L^{2}(G),X)\longrightarrow \ell_{+}(L^{2}(G),X)
$$
be the multiplier operator associated with $\pi$. Then for any
$I\subset G$ and any $z\in X$, the function $t\mapsto
\chi_{I}(t)\pi(t)z$ is equal to $M_\pi(\chi_{I}\otimes z)$. Thus
according to (\ref{3M}), we have
$$
\sum_k e_k\otimes F_k \,=\, M_{\pi_{n}}\Bigl(\sum_k
e_k\otimes\chi_{I}\otimes z_k\Bigr).
$$
Moreover $\bigl(\vert I\vert^{-\frac{1}{2}}
e_k\otimes\chi_{I}\bigr)_k$ is an orthonormal family of
$\ell^{2}_{N}\h L^{2}(G)$, hence
$$
\Biglnorm{\sum_k e_k\otimes\chi_{I}\otimes z_k}\,=\,\vert I
\vert^{\frac{1}{2}}\BigGnorm{\sum_k e_k\otimes z_k}.
$$
Consequently  we have
$$
\Biglnorm{\sum_k e_k\otimes
F_k}\leq\gamma\bigl(F_{\pi_N}\bigr)\Biglnorm{\sum_k
e_k\otimes\chi_{I}\otimes z_k}\leq\gamma(\pi)\vert I
\vert^{\frac{1}{2}}\BigGnorm{\sum_k e_k\otimes z_k}.
$$

We now turn to the lower estimate, for which we will use duality.
For any $\varphi_{1},\ldots,\varphi_{N}$ in $X^{*}$, we set
$$
\bignorm{(\varphi_{1},\ldots,\varphi_{N})}_{\ell^{*}}\,=
\,\sup\biggl\{\Bigl\vert\sum_{k=1}^{N}\,\langle\varphi_{k},x_k\rangle\,\Bigr\vert\,
:\, x_1,\ldots, x_N\in X,\ \Bignorm{\sum_{k=1}^{N} g_k\otimes
x_k}_{G(X)}\leq 1\,\biggr\}.
$$
We fix some $I\subset G$ with $0<\vert I\vert <\infty$. Then we
consider $z_1,\ldots,z_N$ in $X$ and the functions $F_1,\ldots, F_N$ in
$L^{2}(G;X)$ given by $F_k(t) =\chi_{I}(t)\pi(t)z_k$. By
Hahn-Banach there exist $\varphi_{1},\ldots,\varphi_{N}$ in
$X^{*}$ such that
$$
\bignorm{(\varphi_{1},\ldots,\varphi_{N})}_{\ell^{*}}=1\qquad\hbox{and}\qquad
\BigGnorm{\sum_k e_k\otimes
z_k}=\,\sum_k\langle\varphi_{k},z_k\rangle\,.
$$
Using the latter equality and  Lemma \ref{3Approx}, (2), we thus
have
\begin{align*}
\vert I\vert\,\BigGnorm{\sum_k e_k\otimes z_k}& =\,\sum_k \int_{I}
\langle\varphi_k,z_k\rangle\,dt\\ & =\,\sum_k\,\int_{I}\langle
\pi(t^{-1})^{*}\varphi_k,\pi(t)z_k\rangle\, dt\\
& =\,\sum_k \int_G\bigl\langle\chi_{I}(t)\pi(t^{-1})^{*}\varphi_k,F_k(t)\bigr\rangle\, dt\\
& =\,\lim_{\theta\to\infty}
\,\sum_k\,\int_G\bigl\langle\chi_{I}(t)\pi(t^{-1})^{*}\varphi,
\bigl[E_\theta^{X}(F_k)\bigr](t)\bigr\rangle\, dt\,.
\end{align*}
Let $\theta=(I_{1},\ldots, I_{m})$ be a subpartition of $G$ such
that $I=I_{1}\cup\cdots\cup I_{n}$ for some $n\leq m$ and let
$$
J_\theta= \,\sum_k\,
\int_G\bigl\langle\chi_{I}(t)\pi(t^{-1})^{*}\varphi_k,
\bigl[E_\theta^{X}(F_k)\bigr](t)\bigr\rangle\, dt
$$
be the above sum of integrals. For any $i=1,\ldots,n$, let
$$
T_i=\,\frac{1}{\vert I_i\vert}\,\int_{I_i}\pi(t)\,
dt\qquad\hbox{and}\qquad S_i=\, \frac{1}{\vert
I_i\vert}\,\int_{I_i}\pi(t^{-1})\, dt\,.
$$
For any $k$ we have
\begin{equation}\label{4E}
E_\theta^{X}(F_k)=\,\sum_{i=1}^{n} \chi_{I_i}\otimes T_i(z_k)\,.
\end{equation}
We deduce that
\begin{align*}
J_\theta & =\,\sum_{k=1}^{N} \sum_{i=1}^{n}\,\int_{I_i}\langle
\pi(t^{-1})^{*}\varphi_k,  T_i(z_k) \rangle\, dt\\
& =\, \sum_{k=1}^{N} \sum_{i=1}^{n}\, \int_{I_i}\langle \varphi_k,
\pi(t^{-1}) T_i(z_k) \rangle \, dt\\
& =\,\sum_{k=1}^{N}  \sum_{i=1}^{n}\, \vert I_i \vert \, \langle
\varphi_k , S_i T_i(z_k)\rangle.
\end{align*}
According to the definition of the $\ell^{*}$-norm, this identity
implies that
$$
\vert J_\theta\vert\leq\,\Bignorm{\sum_k
g_k\otimes\Bigl(\sum_i\vert I_{i}\vert S_iT_i(z_k)\Bigr)}_{G(X)}.
$$
Let $a\colon\ell^{2}_{nN}\to \ell^{2}_{N}$ be defined by
$$
a\Bigl((c_{ik})_{\substack{1\leq i\leq n \\ 1\leq  k\leq N}}\Bigr)\,=\,\Bigl(\sum_i
c_{ik}\vert I_i\vert^{\frac{1}{2}}\Bigr)_{k},\qquad c_{ik}\in\Cdb.
$$
Let $c=(c_{ik})$ in $\ell^{2}_{nN}$. Using Cauchy-Schwarz and the
fact that $\vert I\vert=\sum_i\vert I_i\vert$, we have
\begin{align*}
\norm{a(c)}_{2}^{2} & = \,\sum_k\Bigl\vert\sum_i c_{ik}
\vert I_i\vert^{\frac{1}{2}}\Bigr\vert^{2}\\
&\leq \sum_k\Bigl(\sum_i\vert c_{ik}\vert^{2}\Bigr)\,\Bigl( \sum_i
\vert I_i\vert\Bigr)\, =\vert I\vert \norm{c}_{2}.
\end{align*}
Hence $\norm{a}\leq \vert I\vert^{\frac{1}{2}}$. Let $(g_{ik})_{i,k\geq 1}$ be a doubly indexed 
family of  independent standard Gaussian variables. According to Lemma
\ref{2Inv}, the latter estimate implies that
$$
\Bignorm{\sum_{i,k} g_k\otimes \vert I_i\vert^{\frac{1}{2}}
y_{ik}}_{G(X)}\,\leq\, \vert I\vert^{\frac{1}{2}}\,
\Bignorm{\sum_{i,k} g_{ik}\otimes y_{ik}}_{G(X)}
$$
for any $y_{ik}$ in $X$. We deduce that
$$
\vert J_\theta\vert\leq\,\vert I\vert^{\frac{1}{2}}\,
\Bignorm{\sum_{i,k} g_{ik} \otimes \vert I_i\vert^{\frac{1}{2}}
S_i T_i (z_k)}_{G(X)}.
$$
Next observe that by convexity again, we
have $\gamma\bigl(\{S_1,\ldots,S_n\}\bigr)\leq\gamma(\pi)$. The
latter estimate therefore implies that
$$
\vert J_\theta\vert\leq\,\gamma(\pi)\vert I\vert^{\frac{1}{2}}\,
\Bignorm{\sum_{i,k} g_{ik} \otimes \vert I_i\vert^{\frac{1}{2}}
T_i (z_k)}_{G(X)}.
$$
Since $(\vert I_i\vert^{-\frac{1}{2}} \chi_{I_i})_i$ is an orthonormal family
of $L^{2}(G)$, we have, using (\ref{4E}),
\begin{align*}
\Bignorm{\sum_{i,k} g_{ik} \otimes \vert I_i \vert^{\frac{1}{2}}
T_i (z_k)}_{G(X)} & = \Bignorm{\sum_{i,k} e_k\otimes \chi_{I_i}
\otimes T_i (z_k)}_{G}\\
&= \Bignorm{\bigl(I_{\ell^{2}_N}\otimes
E_\theta^{X}\bigr)\Bigl(\sum_k e_k\otimes F_k\Bigr)}_{G}\\ &\leq
\Biglnorm{\sum_k e_k\otimes F_k}.
\end{align*}
Hence
$$
\vert J_\theta\vert\leq\,\gamma(\pi)\vert I\vert^{\frac{1}{2}}\,
\Biglnorm{\sum_k e_k\otimes F_k},
$$
and passing to the limit when $\theta\to\infty$, this yields the
lower estimate.
\end{proof}

\begin{remark}\label{4op}
The above lemma remains true if $\pi(t)$ is replaced by $\pi(t^{-1})$. 
This follows either from the proof itself, or by considering the representation 
$\pi^{\rm op}\colon G^{\rm op}\to B(X)$ defined by $\pi^{\rm op}(t)=\pi(t^{-1})$.
Here $G^{\rm op}$ denotes the opposite group of $G$, i.e. 
$G$ equipped with the reverse product. 
\end{remark}

The following notion was introduced in \cite{KL}. 
For any $C^*$-algebra $A$, the space $M_N(A)$ of $N\times N$ 
matrices with entries in $A$ is equipped with its unique $C^*$-norm.

\begin{definition}\label{4Mat}
Let $A$ be a $C^{*}$-algebra and let $w\colon A\to B(X)$ be a bounded linear map. 
\begin{itemize}
\item [(1)] We say that $w$ is matricially $\gamma$-bounded if there is a constant $C\geq 0$ such that
\begin{equation}\label{4Mat-gamma}
\Bignorm{\sum_{i,j=1}^{N} g_i\otimes w(a_{ij})x_j}_{G(X)}\,\leq\,C\, \bignorm{[a_{ij}]}_{M_N(A)}\,
\Bignorm{\sum_{j=1}^{N} g_j\otimes  x_j}_{G(X)}
\end{equation}
for any $N\geq 1$, for any $[a_{ij}]\in M_N(A)$ and for any $x_1,\ldots, x_N\in X$. In this case we let 
$\matnorm{w}$ denote the smallest possible $C$. 
\item [(2)] We say that $w$ is matricially $R$-bounded if (\ref{4Mat-gamma}) 
holds when the Gaussian sequence $(g_k)_k$ is replaced by a Rademacher sequence $(\varepsilon_k)_k$, and we let 
$\matnormR{w}$ denote the smallest possible constant in this case.
\end{itemize}
\end{definition}

Two simple comments are in order (see \cite[Remark 4.2]{KL} for details). First, 
restricting  (\ref{4Mat-gamma}) to the case when $[a_{ij}]$ is a diagonal matrix, 
we obtain that any matricially $\gamma$-bounded map $w\colon A\to B(X)$ is $\gamma$-bounded,
with 
$$
\gamma(w)\leq\matnorm{w}.
$$
Second, if $X=H$ is a Hilbert space, 
then $\gamma$-matricial boundedness coincides
with complete boundedness and  we have
$\matnorm{w}=\cbnorm{w}$ (the completely bounded norm of $w$).
Similar comments apply to $R$-boundedness.

The proof of our main result below uses transference techniques from \cite{CW} in the 
framework of $\ell$-spaces.

\begin{theorem}\label{4MainThm} Let $G$ be an amenable locally compact group and let
$\pi\colon G\to B(X)$ be a bounded representation. The following
assertions are equivalent.
\begin{itemize}
\item [(i)] $\pi$ is $\gamma$-bounded. 
\item [(ii)] $\pi$ extends
to a bounded homomorphism $w\colon C^{*}_{\lambda}(G)\to B(X)$ 
(in the sense of Definition \ref{2Def}) and $w$ is $\gamma$-bounded.
\end{itemize}
In this case, $w$ is matricially $\gamma$-bounded and
$$
\gamma(\pi)\leq\gamma(w)\leq\matnorm{w}\leq\gamma(\pi)^{2}.
$$
\end{theorem}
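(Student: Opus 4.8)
The plan is to settle the implication (ii)$\Rightarrow$(i) at once and then derive (i)$\Rightarrow$(ii) together with the quantitative statements in a single sweep. If $\pi$ extends to a $\gamma$-bounded homomorphism $w$, then $\sigma_\pi=w\circ\sigma_\lambda$ and, since $\norm{\sigma_\lambda(k)}\leq\norm{k}_1$, one has $\{\sigma_\pi(k):\norm{k}_1\leq 1\}\subseteq\{w(a):\norm{a}_{C^*_\lambda(G)}\leq 1\}$; hence $\sigma_\pi$ is $\gamma$-bounded and $\pi$ is $\gamma$-bounded by Lemma \ref{2Equiv}. That same inclusion and Lemma \ref{2Equiv} also give $\gamma(\pi)=\gamma(\sigma_\pi)\leq\gamma(w)$ once $w$ is known to exist, while $\gamma(w)\leq\matnorm{w}$ is the general remark following Definition \ref{4Mat}. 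So everything reduces to showing that if $\pi$ is $\gamma$-bounded then $\sigma_\pi$ factors through $\sigma_\lambda$, the resulting map $w$ being automatically a homomorphism on $C^*_\lambda(G)=C^*(G)$ (amenability; see the remarks after Definition \ref{2Def}), and that $w$ satisfies the matricial estimate \eqref{4Mat-gamma} with $C=\gamma(\pi)^2$.

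I would prove \eqref{4Mat-gamma} directly with $C=\gamma(\pi)^2$: taking $N=1$ produces $w$ (with $\norm{w}\leq\gamma(\pi)^2$) and the general case gives $\matnorm{w}\leq\gamma(\pi)^2$. By density and continuity it suffices to take $a_{ij}=\sigma_\lambda(f_{ij})$ with $f_{ij}\in C_c(G)$ of common compact support $K$. Fix $x_1,\dots,x_N\in X$ and a measurable $I\subseteq G$ with $0<\vert I\vert<\infty$. Working in $\ell_+(L^2(G_N),X)$, $G_N=G\times\{1,\dots,N\}$, through the identifications \eqref{3ell}, let $v$ correspond to $\sum_j e_j\otimes(t\mapsto\chi_I(t)\pi(t^{-1})x_j)$, and let $T\in B(L^2(G_N))$ be the operator obtained from $[\sigma_\lambda(f_{ij})]$ via the involution $f\mapsto\check f$ (so that $v\circ T$ realizes ``$\sigma_\lambda(f_{ij})$ acting coordinatewise on the $X$-valued functions underlying $v$''); after the routine identifications relating left and right convolution (Remark \ref{4op}), $\norm{T}=\bignorm{[\sigma_\lambda(f_{ij})]}_{M_N(C^*_\lambda(G))}$. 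The transference identity, valid because $t\mapsto\pi(t^{-1})z$ intertwines $\pi$ with left translation, is $\pi(t^{-1})\sigma_\pi(f)z=\bigl(f*[s\mapsto\pi(s^{-1})z]\bigr)(t)$; cutting both sides by $\chi_I$ and comparing shows that $v\circ T$ equals the element $u$ corresponding to $\sum_i e_i\otimes\bigl(t\mapsto\chi_I(t)\pi(t^{-1})\sum_j\sigma_\pi(f_{ij})x_j\bigr)$ up to a boundary term $b$, each coordinate of which is supported (in $t$) on $\bigcup_{s\in K}(sI\,\triangle\,I)$ and is dominated there by $\norm{\pi}\,(\max_k\norm{x_k})\,\sum_j\norm{f_{ij}}_1$.

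Now I would combine three inputs. Proposition \ref{3Tensor}(2) gives $\biglnorm{v\circ T}\leq\lnorm{v}\,\norm{T}$. Lemma \ref{4MainLemma}(2), in its $\pi(t^{-1})$-form (Remark \ref{4op}), gives the upper estimate $\lnorm{v}\leq\gamma(\pi)\,\vert I\vert^{1/2}\Bignorm{\sum_j g_j\otimes x_j}_{G(X)}$ and the lower estimate $\gamma(\pi)^{-1}\vert I\vert^{1/2}\Bignorm{\sum_i g_i\otimes\sum_j\sigma_\pi(f_{ij})x_j}_{G(X)}\leq\lnorm{u}\leq\biglnorm{v\circ T}+\lnorm{b}$. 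Finally, since each coordinate of $b$ is, on each slice $s\in K$, of the form $\chi_{sI\triangle I}(t)\,\pi(t^{-1})(\text{bounded vector})$, Lemma \ref{4MainLemma}(1) applied to the small sets $sI\triangle I$ and integrated over $s$ shows $\lnorm{b}\leq\mathrm{const}\cdot(\varepsilon\vert I\vert)^{1/2}$ with $\varepsilon=\sup_{s\in K}\vert sI\triangle I\vert/\vert I\vert$. Letting $I$ run over a Følner net for $K$ (amenability), $\varepsilon\to 0$, so $\lnorm{b}=o(\vert I\vert^{1/2})$; dividing through by $\vert I\vert^{1/2}$ and passing to the limit yields $\Bignorm{\sum_i g_i\otimes\sum_j\sigma_\pi(f_{ij})x_j}_{G(X)}\leq\gamma(\pi)^2\bignorm{[\sigma_\lambda(f_{ij})]}_{M_N(C^*_\lambda(G))}\Bignorm{\sum_j g_j\otimes x_j}_{G(X)}$, which is \eqref{4Mat-gamma} with $C=\gamma(\pi)^2$. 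Together with the first paragraph this gives all assertions, including the chain $\gamma(\pi)\leq\gamma(w)\leq\matnorm{w}\leq\gamma(\pi)^2$.

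The real difficulty is the boundary estimate making the transference identity ``essentially exact'' in the $\ell$-space norm: for a general Banach space $X$ there is no contractive embedding of $L^2(G;X)$ into $\ell(L^2(G),X)$, so one cannot simply discard a remainder of small $L^2(G;X)$-norm. The remedy is to keep $b$ in a shape to which Lemma \ref{4MainLemma}(1) applies directly, so that $\lnorm{b}$ is controlled by the small measures $\vert sI\triangle I\vert$ supplied by the Følner condition. A secondary, purely bookkeeping nuisance is the non-unimodular case together with the left/right-convolution identifications behind $\norm{T}=\bignorm{[\sigma_\lambda(f_{ij})]}_{M_N(C^*_\lambda(G))}$; and one should keep in mind that amenability is used here both for the Følner net and for the identity $C^*_\lambda(G)=C^*(G)$.
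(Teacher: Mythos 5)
Your proposal is correct and runs on the same engine as the paper's proof: the two-sided estimate of Lemma \ref{4MainLemma} (in its $\pi(t^{-1})$ form, Remark \ref{4op}), the composition bound of Proposition \ref{3Tensor}, (2), the identification of the transpose of $[\sigma_\lambda(k_{ij})]$ as an operator of norm $\bignorm{[\sigma_\lambda(k_{ij})]}_{M_N(C^{*}_\lambda(G))}$, and the F\o lner condition; the reduction of (ii)$\Rightarrow$(i) and of the inequalities $\gamma(\pi)\leq\gamma(w)\leq\matnorm{w}$ to Lemma \ref{2Equiv} and the remarks after Definition \ref{4Mat} is also exactly as in the paper. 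The one genuine divergence is in how the transference identity is set up. You cut the input and the output by the \emph{same} F\o lner set $I$, which forces a boundary term $b$ supported on the sets $I\setminus sI$, $s\in K$, and you must then estimate $\lnorm{b}$ separately -- correctly noting that one cannot discard a remainder that is merely small in $L^2(G;X)$, and instead applying Lemma \ref{4MainLemma}, (1) slice by slice and integrating (a Minkowski-type inequality for $\lnorm{\ \cdotp}$ under the integral, which does hold since $\lnorm{\ \cdotp}$ is a supremum of $G(X)$-norms). This works, but the paper avoids the boundary term altogether by an asymmetric cut: the input function is $\chi_{\Gamma^{-1}V}(s)\pi(s^{-1})x$ while the output is cut to $V$, so that for $s\in V$ and $t\in\Gamma$ one automatically has $t^{-1}s\in\Gamma^{-1}V$ and the transference identity is \emph{exact}, $\widetilde{u}(f)=\int_G\chi_V(s)f(s)\pi(s^{-1})\sigma_\pi(k)x\,ds$. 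Amenability then enters only through the ratio $\vert\Gamma^{-1}V\vert/\vert V\vert\to 1$ rather than through $\sup_{s\in K}\vert sI\triangle I\vert/\vert I\vert\to 0$. The two F\o lner formulations are equivalent, but the paper's version eliminates precisely the step you single out as "the real difficulty," at the negligible cost of carrying two different sets. Everything else in your argument, including the matricial case via the identifications (\ref{3ell})--(\ref{3M}) with constants depending on $N$ absorbed into the $o(\vert I\vert^{1/2})$ term, is sound.
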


\begin{proof}
Assume (ii) and let $\sigma_\pi\colon L^{1}(G)\to B(X)$ be induced
by $\pi$. Then $\sigma_\pi = w\circ \sigma_\lambda$ and
$\sigma_\lambda$ is a contraction. Hence $\sigma_\pi$ is
$\gamma$-bounded, with $\gamma(\sigma_\pi)\leq\gamma(u)$. Then (i)
follows from Lemma \ref{2Equiv} and we have $\gamma(\pi)\leq\gamma(w)$.

\smallskip
Assume (i). Our proof of (ii) will be divided into two parts. We
first show that for any $k\in L^{1}(G)$, we have
\begin{equation}\label{4w}
\norm{\sigma_\pi(k)}\,\leq\gamma(\pi)^{2}
\norm{\sigma_\lambda(k)}.
\end{equation}
This implies the existence of $w\colon C^{*}_{\lambda}(G)\to B(X)$
extending $\pi$. Then we will show (\ref{4w-mat}), which implies
that $w$ is actually $\gamma$-bounded. Although (\ref{4w}) is a
special case of (\ref{4w-mat}), establishing that estimate first
makes the proof easier to read.

\smallskip
Let $k\in L^{1}(G)$ and assume that $k$ has a compact support
$\Gamma\subset G$. Let $V\subset G$ be an arbitrary open
neighborhood of the unit $e$, with $0<\vert V\vert<\infty$. We let
$T\colon L^{2}(G)\to L^{2}(G)$ be the multiplication operator
defined by letting $T(f)=\chi_{V}f$ for any $f\in L^{2}(G)$. Then
we let $S\colon L^{2}(G)\to L^{2}(G)$ be defined by
$$
\bigl(Sg\bigr)(s)=\,\int_{G} k(t)g(ts)\, dt,\qquad g\in L^{2}(G),\
s\in G.
$$
Under the natural duality between $L^{2}(G)$ and itself, $S$ is
the transposed map of $\sigma_\lambda(k)$, hence
\begin{equation}\label{4S}
\norm{S}=\norm{\sigma_\lambda(k)}.
\end{equation}

Let $x\in X$. The set $\Gamma^{-1}V\subset G$ has a positive and
finite measure, hence applying Lemma \ref{4MainLemma} (and Remark \ref{4op}), 
we see that the function 
$$
F\colon s\longmapsto
\chi_{\Gamma^{-1}V}(s)\pi(s^{-1})x
$$
belongs to $L^{2}(G;X)\cap
\ell_{+}(L^{2}(G),X)$. Let $u\colon L^{2}(G)\to X$ be the bounded
operator associated to $F$ and let $\widetilde{u}=u\circ S\circ
T\in B(L^{2}(G),X)$. Consider an arbitrary $f\in L^{2}(G)$. For
any $h\in L^{2}(G)$,
$$
u(h)=\int_G h(s)\chi_{\Gamma^{-1}V}(s)\pi(s^{-1})x\, ds\,,
$$
hence according to the definitions of $T$ and $S$, we have
$$
\widetilde{u}(f) = \,\int_G\Bigl(\int_G k(t)\chi_V(ts) f(ts)\,
dt\,\Bigr)\, \chi_{\Gamma^{-1}V}(s)\pi(s^{-1})x\, ds\,.
$$
Using Fubini (which is applicable because $\chi_V f$ is
integrable) and the left invariance of $ds$, this implies
\begin{align*}
\widetilde{u}(f) & = \int_G k(t) \Bigl(\int_G \chi_V(ts) f(ts)
\chi_{\Gamma^{-1}V}(s)\pi(s^{-1})x\, ds\,\Bigr)\, dt\\
& = \int_G k(t) \Bigl(\int_G \chi_V(s) f(s)
\chi_{\Gamma^{-1}V}(t^{-1}s)\pi(s^{-1}t)x\, ds\,\Bigr)\, dt\\
& = \int_G \chi_V(s) f(s)\Bigl(\int_G k(t)
\chi_{\Gamma^{-1}V}(t^{-1}s)\pi(s^{-1}t)x\, dt\,\Bigr)\, ds.
\end{align*}
Since $k$ is supported in $\Gamma$ we deduce that
\begin{equation}\label{4v}
\widetilde{u}(f) = \int_G \chi_V(s) f(s)\Bigl(\int_G k(t)
\pi(s^{-1}t)x\, dt\,\Bigr)\, ds\,.
\end{equation}
Let $y=\sigma_\pi(k)x$. For any $s\in G$, we have
$$
\int_G k(t) \pi(s^{-1}t)x\, dt\,= \int_G k(t) \pi(s^{-1})\pi(t)x\,
dt\, =\pi(s^{-1})y.
$$
Thus (\ref{4v}) shows that $\widetilde{u}$ is the bounded operator
associated to the function 
$$
\widetilde{F}\colon s\longmapsto
\chi_V(s)\pi(s^{-1})y. 
$$
By Proposition \ref{3Tensor}, we have
$\lnorm{\widetilde{F}}\leq\norm{ST}\lnorm{F}$. Applying (\ref{4S})
and the fact that $T$ is a contraction, we therefore obtain that
$$
\biglnorm{s\mapsto\chi_V(s)\pi(s^{-1})y}\leq\norm{\sigma_\lambda(k)}
\biglnorm{s\mapsto\chi_{\Gamma^{-1}V}(s)\pi(s^{-1})x}.
$$
Applying Lemma \ref{4MainLemma} (and Remark \ref{4op}) twice we deduce
that
$$
\vert V \vert^{\frac{1}{2}}\norm{y}\leq \gamma(\pi)^{2}
\norm{\sigma_\lambda(k)}\vert
\Gamma^{-1}V\vert^{\frac{1}{2}}\norm{x},
$$
and hence
$$
\norm{\sigma_\pi(k)x}\leq \gamma(\pi)^{2}\Bigl(\frac{\vert
\Gamma^{-1}V\vert}{\vert V\vert}\Bigr)^{\frac{1}{2}}\,
\norm{\sigma_\lambda(k)} \norm{x}.
$$
We now apply the assumption that $G$ is amenable. According to
Folner's condition (see e.g. \cite[Chap. 2]{CW}), we can choose $V$ such that
$\frac{\vert \Gamma^{-1}V\vert}{\vert V\vert}$ is arbitrarily close
to $1$. This yields (\ref{4w}) when $k$ is compactly supported.
Since $\sigma_\lambda$ and $\sigma_\pi$ are continuous, this actually 
implies (\ref{4w}) for any $k\in L^{1}(G)$.

\smallskip
We now aim at showing that $w\colon C_\lambda^{*}(G)\to B(X)$ is
matricially $\gamma$-bounded  and that
$\matnorm{w}\leq\gamma(\pi)^{2}$. In fact the argument is essentially a repetition 
of the above one, modulo standard matrix manipulations. We fix some integer 
$N\geq 1$ and consider 
$x_1,\ldots, x_N$ in $X$. According to Definition \ref{4Mat}, it suffices to show
that for any $[k_{ij}]\in M_N\otimes L^{1}(G)$, we have
\begin{equation}\label{4w-mat}
\Bignorm{\sum_{i,j} g_i\otimes \sigma_{\pi}(k_{ij})x_j}_{G(X)}\,\leq \,\gamma(\pi)^{2}\bignorm{[\sigma_\lambda (k_{ij})]}_{M_N(C^{*}_{\lambda}(G))}\,
\Bignorm{\sum_j g_j\otimes x_j}_{G(X)}.
\end{equation}
In the sequel we let 
$$
x=\sum_{j=1}^{N} e_j\otimes x_j\,\in \ell^{2}_N\otimes X.
$$
Let us identify $M_N\otimes L^{1}(G)$ with $L^{1}(G;M_N)$ in the
natural way and let $k\in L^{1}(G;M_N)$ be the $M_N$-valued function corresponding to $[k_{ij}]$. 
Then
$$
\bigl(I_{M_N}\otimes
\sigma_\pi\bigr)\bigl([k_{ij}]\bigr) \,= \int_G\bigl(k(t)\otimes \pi(t)\bigr)\,dt\qquad\hbox{in}\quad M_N\otimes B(X).
$$
Using the isometric identification
\begin{equation}\label{4Hilbert}
\ell^{2}_N\h L^{2}(G)\,=\, L^{2}(G;\ell^{2}_N),
\end{equation}
we can regard $M_N(C_{\lambda}^{*}(G))$ as a $C^{*}$-subalgebra of $B(L^{2}(G;\ell^{2}_N))$. In this 
situation, it is easy to check that the matrix $[\sigma_
\lambda(k_{ij})]$ corresponds to the operator valued convolution $g\mapsto k*g$ defined by 
$$
(k*g)(s)\,=\,\int_{G} k(t)\bigl[g(t^{-1}s)\bigr]\,dt\,,\qquad 
g\in L^{2}(G;\ell^{2}_N),\ s\in G.
$$
Thus showing (\ref{4w-mat}) amounts to show that
\begin{equation}\label{4w-mat-2}
\Bignorm{\int_G\bigl(k(t)\otimes \pi(t)\bigr)x\,
dt\,}_{G}\leq\gamma(\pi)^{2} \,
\bignorm{k\,*\,\cdotp\colon L^{2}(G;\ell^{2}_N)\longrightarrow L^{2}(G;\ell^{2}_N)}\,\norm{x}_G.
\end{equation}
As in the first part of the proof, we may and do assume that $k$
has a compact support, which we denote by $\Gamma$, and we fix an
arbitrary open neighborhood $V\subset G$ of $e$, with $0<\vert
V\vert<\infty$. 
We let $\widehat{T}=I_{\ell^{2}_N}\otimes T\colon
L^{2}(G;\ell^{2}_N)\to L^{2}(G;\ell^{2}_N)$ be the multiplication
operator by $\chi_V$ and we let $\widehat{S}\colon
L^{2}(G;\ell^{2}_N)\to L^{2}(G;\ell^{2}_N)$ be the transposed map of 
$g\mapsto k*g$.
Let $y_1,\ldots,y_N$ in $X$ such that 
$$
\int_G\bigl(k(t)\otimes \pi(t)\bigr)x\, dt\ =\,\sum_{k=1}^{N} e_k\otimes y_k.
$$
Next for any $k=1,\ldots,N$, let
$$
F_k(s)=\chi_{\Gamma^{-1}V}(s) \pi(s^{-1})x_k
\qquad\hbox{and}\qquad
\widetilde{F_k}(s)=\chi_{V}(s)\pi(s^{-1})y_k.
$$
Then the argument in the first part of this proof and the
identification (\ref{4Hilbert}) show that
$$
\Biglnorm{\sum_k e_k\otimes \widetilde{F_k}}
\leq\norm{\widehat{S}\widehat{T}} \Biglnorm{\sum_k e_k\otimes F_k},
$$
and hence 
$$
\Biglnorm{\sum_k e_k\otimes \widetilde{F_k}}
\leq\,\bignorm{k\,*\,\cdotp\colon L^{2}(G;\ell^{2}_N)\longrightarrow L^{2}(G;\ell^{2}_N)}\,
\Biglnorm{\sum_k e_k\otimes F_k}.
$$
Now using Lemma \ref{4MainLemma}, (2) and arguing as in the first part of the
proof, we deduce (\ref{4w-mat-2}).
\end{proof}

\begin{remark}\label{4Abelian}
If $G$ is an abelian group and $\widehat{G}$ denotes its dual group, 
then the Fourier transform  yields  a natural identification 
$C^{*}_\lambda(G)=C_{0}(\widehat{G})$. Since abelian groups are amenable,
Theorem \ref{4MainThm} provides a 1-1 correspondence between $\gamma$-bounded 
representations $G\to B(X)$ and $\gamma$-bounded nondegenerate 
homomorphisms $C_{0}(\widehat{G})\to B(X)$. 

It is shown in \cite[Prop. 2.2]{DR2} (see also \cite{DR1})
that any $\gamma$-bounded nondegenerate 
homomorphism $w\colon C_{0}(\widehat{G})\to B(X)$ is of the form 
\begin{equation}\label{4Spectral}
w(h) =\,\int_{\widehat{G}} h\, dP,\qquad h\in C_{0}(\widehat{G}),
\end{equation}
where $P$ is a regular strong operator $\sigma$-additive spectral measure 
from the $\sigma$-algebra $\B(\widehat{G})$ of Borel subsets of $\widehat{G}$ into $B(X)$.
Moreover the range of this spectral measure is $\gamma$-bounded. 
Conversely, for any such spectral measure, (\ref{4Spectral}) defines a  
$\gamma$-bounded nondegenerate 
homomorphism $w\colon C_{0}(\widehat{G})\to B(X)$. (In \cite{DR1, DR2}, 
the authors consider $R$-boundedness only but their results hold as well for $\gamma$-boundedness.)

Hence we obtain a 
1-1 correspondence between $\gamma$-bounded 
representations $G\to B(X)$ and regular, $\gamma$-bounded, 
strong operator $\sigma$-additive spectral measures $\B(\widehat{G})\to B(X)$.
\end{remark}

\begin{remark}\label{4HilbertCase}\ 

(1) The above theorem should be regarded as a Banach space version of the Day-Dixmier
unitarization Theorem which asserts that any bounded representation of an amenable 
group $G$ on some Hilbert space $H$ is unitarizable (see \cite[Chap. 0]{Pis2}). Indeed when $X=H$, the
main implication `(i)$\,\Rightarrow\,$(ii)' of Theorem \ref{4MainThm} says that 
any bounded representation $\pi\colon G\to B(H)$ extends to a completely bounded homomorphism
$w\colon C_{\lambda}^{*}(G)\to B(H)$, with $\cbnorm{w}\leq\norm{\pi}^{2}$. According to Haagerup's 
similarity Theorem \cite{Ha}, this implies the existence of an isomorphism $S\colon H\to H$ such that 
$\norm{S^{-1}}\norm{S}\leq\norm{\pi}^{2}$ and $S^{-1}w(\cdotp)S\colon C_{\lambda}^{*}(G)\to B(H)$ is a $*$-representation. Equivalently, 
$S^{-1}\pi(\cdotp)S$ is a unitary representation.

(2) We cannot expect an extension of Theorem \ref{4MainThm} for general (= non amenable) groups. 
See  \cite[Chap. 2]{Pis2} for an account on non unitarizable representations of groups on Hilbert space,
and relevant open problems.
\end{remark}

\medskip
\section{Representations of nuclear $C^*$-algebras on spaces with property $(\alpha)$}
We say that a Banach space $X$ has propery $(\alpha)$ if there is a constant $\alpha\geq 1$ such that
\begin{equation}\label{NAlpha(X)}
\Bignorm{\sum_{i,j}\varepsilon_i\otimes\varepsilon_j\otimes t_{ij} 
x_{ij}}_{\rara{X}}\,\leq\,\alpha\,\sup_{i,j}\vert t_{ij}\vert\, 
\Bignorm{\sum_{i,j}\varepsilon_i\otimes\varepsilon_j\otimes x_{ij}}_{\rara{X}}
\end{equation}
for any finite families $(x_{ij})_{i,j}$ in $X$ and $(t_{ij})_{i,j}$ in $\Cdb$. 
This class was introduced in \cite{Pis} and has played an important role in 
several recent issues concerning functional calculi and unconditionality (see \cite{CPSW,DR1,DSW,KaW1,KL}). 
We note that  Banach spaces with property $(\alpha)$ have a finite cotype (because they cannot contain the 
$\ell^{\infty}_n$'s uniformly). Thus Rademacher averages and Gaussian averages 
are equivalent on them. Hence $R$-boundedness and $\gamma$-boundedness (as well as matricial
$R$-boundedness and matricial $\gamma$-boundedness) are equivalent notions on these spaces.
The class of spaces with property $(\alpha)$ is stable under taking subspaces 
and comprises Banach lattices with a finite cotype. 
On the opposite, non trivial noncommutative $L^p$-spaces do not belong to this class.
For a space $X$ with property $(\alpha)$ we let $\alpha(X)$ denote the 
smallest constant $\alpha$ satisfying $(\ref{NAlpha(X)})$. 

Let $A$ be a $C^*$-algebra and let $w\colon A\to B(X)$ be a bounded homomorphism. 
Assume that $X$ has property $(\alpha)$.
It was shown in \cite[Cor. 2.19]{DR2} that if  $A$ is abelian, then $w$ is automatically $R$-bounded.
By \cite{KL}, $w$ is actually matricially $R$-bounded. When $G$ is an amenable group, the $C^*$-algebra
$C^{*}_\lambda(G)$ is nuclear (see e.g. \cite[(1.31)]{Pat}). Thus in view of Theorem \ref{4MainThm}, the question whether
any bounded homomomorphism $w\colon A\to B(X)$ is automatically $R$-bounded
(or matricially $R$-bounded) when $A$ is nuclear became quite relevant. A positive answer to this question
was shown to me by \'Eric Ricard. I thank him for letting me include this result in the present paper.

\begin{theorem}\label{NMain} Let $X$ be a Banach space with property $(\alpha)$ and let $A$ be a nuclear $C^*$-algebra. 
Any bounded homomorphism $w\colon A\to B(X)$ is matricially $R$-bounded. If further $w$ is nondegenerate, then
$$
\matnormR{w}\leq K_X\norm{w}^{2},
$$
where $K_X\geq 1$ is a constant only depending on $\alpha(X)$.
\end{theorem}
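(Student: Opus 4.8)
The plan is to reduce the nuclear case to the abelian case, where the result is already known via \cite[Cor.~2.19]{DR2} and \cite{KL}, using the structure theory of nuclear $C^*$-algebras together with the stability of property $(\alpha)$ under the relevant constructions. First I would recall that a $C^*$-algebra $A$ is nuclear if and only if it has the completely positive approximation property: there are nets of completely positive contractions $\varphi_\alpha\colon A\to M_{n_\alpha}$ and $\psi_\alpha\colon M_{n_\alpha}\to A$ such that $\psi_\alpha\circ\varphi_\alpha\to \mathrm{id}_A$ in the point-norm topology. Thus, if we can prove the matricial $R$-boundedness bound for bounded homomorphisms on each matrix algebra $M_n$ (with a constant depending only on $\alpha(X)$ and $\norm{w}$, not on $n$), then a limiting argument combined with the fact that $w\circ\psi_\alpha\circ\varphi_\alpha\to w$ pointwise in norm should transfer the estimate to $A$. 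The factorization maps are only completely positive, not multiplicative, so one cannot literally compose homomorphisms; instead one works with the completely bounded decompositions and uses that a bounded homomorphism $w\colon A\to B(X)$ induces, after composing with $\psi_\alpha$, a completely bounded map $M_{n_\alpha}\to B(X)$ whose cb-norm is controlled by $\norm{w}$ (since $\psi_\alpha$ is a complete contraction and $w$ on a nuclear $C^*$-algebra is automatically completely bounded with $\cbnorm{w}=\norm{w}$, by Haagerup's/Christensen's theorem on cb-maps into $B(X)$ — more precisely, bounded homomorphisms into $B(H)$ from nuclear algebras are cb, but here $X$ is a Banach space, so one must be careful).

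More concretely, the key reduction is to the case $A=M_n$, or rather to a matrix amplification statement. A bounded homomorphism $w\colon M_n\to B(X)$ is determined by the images of the matrix units, and since the $e_{ij}$ form a system of matrix units, $w$ is similar to a $*$-homomorphism after tensoring, so $M_n$ acts on $X$ essentially as $X\cong \ell^2_n\otimes Y$ for a complemented subspace $Y$; the point is that $X$ has property $(\alpha)$ and property $(\alpha)$ passes to such ``matricial slices.'' Then matricial $R$-boundedness of $w$ on $M_n$ reduces to an estimate that is handled exactly as in the abelian case, because $M_n$ is spanned by its diagonal (after conjugation) — or one can simply invoke \cite{KL} directly for the maximal abelian subalgebra together with the matrix-unit conjugations. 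The nondegeneracy hypothesis is used precisely to get the clean factorization $X=\ell^2_n\otimes Y$ with good constants and hence the quadratic bound $\matnormR{w}\le K_X\norm{w}^2$, the square coming (as in Theorem \ref{4MainThm}) from passing back and forth through a similarity of norm controlled by $\norm{w}$.

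After establishing the matrix-algebra case with a uniform constant, I would run the approximation argument: given $[a_{ij}]\in M_N(A)$ and $x_1,\dots,x_N\in X$, apply the definition of matricial $R$-boundedness to $w\circ\psi_\alpha$ acting on the finitely many relevant elements $\varphi_\alpha(a_{ij})\in M_{n_\alpha}$, use that $\norm{[\varphi_\alpha(a_{ij})]}_{M_N(M_{n_\alpha})}\le \norm{[a_{ij}]}_{M_N(A)}$ (complete contractivity of $\varphi_\alpha$), and that $w(\psi_\alpha(\varphi_\alpha(a_{ij})))\to w(a_{ij})$ in norm, hence in $B(X)$, so the Rademacher averages converge. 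This yields \eqref{4Mat-gamma} (with Rademacher variables) for $w$ on $A$ with the same constant $K_X\norm{w}^2$.

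The main obstacle I anticipate is the matrix-algebra base case done with a constant depending \emph{only} on $\alpha(X)$ and $\norm{w}$, uniformly in $n$: one must show that a bounded (not a priori cb) homomorphism $w\colon M_n\to B(X)$ on a Banach space with property $(\alpha)$ is similar to a ``nice'' representation via a similarity of norm bounded by $\norm{w}$ — this is the Banach-space analog of the fact that bounded representations of $M_n$ on Hilbert space are similar to $*$-representations, and it requires identifying $X$ with $\ell^2_n\otimes Y$ up to isomorphism with controlled constants, then checking that the induced amplified action is $R$-bounded with the help of property $(\alpha)$ of $Y$ (equivalently of $X$). This is where the real content lies; the nuclear-to-matrix reduction and the final limiting step are comparatively routine. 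A secondary technical point is checking that $w$ is automatically completely bounded (so that $w\circ\psi_\alpha$ makes sense with controlled cb-norm) — for a homomorphism on a nuclear $C^*$-algebra into $B(X)$ this should follow from the already-established abelian/matricial results applied to maximal abelian subalgebras plus an averaging over matrix units, but it needs to be spelled out.
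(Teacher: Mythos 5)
There is a genuine gap, and it sits exactly where you call the argument ``comparatively routine'': the reduction from a nuclear $A$ to matrix algebras via the completely positive approximation property does not work. The factorizing maps $\varphi_\alpha\colon A\to M_{n_\alpha}$ and $\psi_\alpha\colon M_{n_\alpha}\to A$ are completely positive but not multiplicative, so $w\circ\psi_\alpha\colon M_{n_\alpha}\to B(X)$ is \emph{not} a homomorphism, and the matrix-algebra base case (whatever form it takes, it uses multiplicativity of $w$ in an essential way, e.g.\ through identities like $w(E_{1i}^{k})w(a)=\sum_j a_{ij}^{k}w(E_{1j}^{k})$) simply does not apply to it. The only general way to bound $\matnormR{w\circ\psi_\alpha}$ by $\matnormR{w}\,\cbnorm{\psi_\alpha}$ is to already know that $w$ is matricially $R$-bounded, which is circular. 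Your fallback --- that $w$ on a nuclear algebra is ``automatically completely bounded'' --- is not available either: $B(X)$ for a general Banach space carries no operator space structure, and the meaningful substitute for complete boundedness here is precisely the matricial $R$-boundedness being proved. The paper avoids this trap by going through the bidual: by Connes's theorem $A^{**}$ is an injective, hence hyperfinite, von Neumann algebra, so it is the $w^{*}$-closure of an increasing net of \emph{finite-dimensional von Neumann subalgebras} $A_\lambda$; the canonical $w^{*}$-continuous extension $\widehat{u}\colon A^{**}\to B(X^{**})$ of $w$ restricts to genuine unital homomorphisms on each $A_\lambda$, to which the finite-dimensional case applies with a uniform constant (using ${\rm Rad}_n(X)^{**}={\rm Rad}_n(X^{**})$ to keep $\alpha(X^{**})=\alpha(X)$, and nondegeneracy to see that $\widehat{u}(1)$ fixes $X$), and Kaplansky's density theorem transfers the estimate back to $M_N(A)$.

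Your sketch of the finite-dimensional base case is in the right spirit but is also where you would need to supply the actual content. The paper's route: for unital $w$ on $\oplus_k M_{n_k}$ one first renorms $X$ by averaging $\norm{w(U)x}^{2}$ over the unitary group to reduce to $\norm{w}=1$ (this is where the factor $\norm{w}^{2}$ comes from, not from a similarity of $X$ with $\ell^{2}_n\otimes Y$), and then builds an explicit embedding $S(x)=\sum_{k,j}\varepsilon_{jk}\otimes w(E_{1j}^{k})x$ of $X$ into ${\rm Rad}_m(X)$ which intertwines $w$ with the canonical amplification $a\mapsto a\otimes I_X$; the two-sided bound $\norm{S},\norm{S^{-1}}\leq B_X$ uses property $(\alpha)$ through a carefully chosen unitary with all entries of modulus $n_k^{-1/2}$, and the uniform (in $m$) matricial $R$-boundedness of the amplifications on ${\rm Rad}_m(X)$ is a separate lemma from \cite{KL}. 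None of this follows from the abelian case applied to a maximal abelian subalgebra plus ``averaging over matrix units'' without a concrete construction of this kind, so as written your proposal is missing both the correct infinite-dimensional reduction and the substance of the finite-dimensional step.
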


We need two lemmas. 
In the sequel we let $(\varepsilon_j)_{j\geq 1},\, (\theta_i)_{i\geq 1}$ and $(\eta_k)_{k\geq 1}$ 
denote Rademacher sequences. For simplicity we will often use the 
same notations $\varepsilon_j,\theta_i,\eta_k$ to  denote values 
of these variables. We start with a double estimate which will lead to the  result 
stated in Theorem \ref{NMain} in the case when $A$ is finite-dimensional. 
When 
\begin{equation}\label{NFD}
A=\mathop{\oplus}\limits_{k=1}^{N}M_{n_k},
\end{equation} 
we let $(E_{ij}^{k})_{1\leq i,j\leq n_k}$ denote the canonical basis of $M_{n_k}$, for any $k=1,\ldots,N$.

\begin{lemma}\label{NEq1}
Let $X$ be a Banach space with property $(\alpha)$, let $n_{1},\ldots, n_{N}$ be positive integers, and let
$$
w\colon \mathop{\oplus}\limits_{k=1}^{N}M_{n_k}\,\longrightarrow B(X)
$$
be any unital homomorphism. Then for any $x\in X$, we have 
$$
C_{X}^{-1}\norm{w}^{-2}\norm{x}\, \leq\, \Bignorm{\sum_{k=1}^{N}\sum_{j=1}^{n_k}\varepsilon_j\otimes 
\eta_k\otimes w(E_{1j}^{k})x}_{\rara{X}}
\,\leq\,C_{X} \norm{w}^{2} \norm{x},
$$
where $C_X\geq 1$ is a constant only depending on $\alpha(X)$.
\end{lemma}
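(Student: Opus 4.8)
The plan is to reduce the statement, via a standard block-diagonalization, to the case of a single matrix algebra $A=M_n$ with $w\colon M_n\to B(X)$ unital, and then exploit the rigidity of $*$-homomorphisms together with property $(\alpha)$. First I would observe that a unital homomorphism $w$ on $\bigoplus_k M_{n_k}$ decomposes as $w=\sum_k w_k$, where $w_k$ is the restriction to the $k$-th summand composed with the corner projection $w(1_k)=:p_k$; the $p_k$ are idempotents with $\sum_k p_k=I_X$, and $w_k$ is a homomorphism $M_{n_k}\to B(p_kX)$ which is unital on $p_kX$. The presence of the extra Rademacher variable $\eta_k$ in the statement is exactly there to handle this direct sum: by property $(\alpha)$ the double average over $(\varepsilon_j,\eta_k)$ of the vectors $w(E^k_{1j})x=w_k(E^k_{1j})p_kx$ is comparable (up to $\alpha(X)$) to a suitable combination of the single averages over $j$ of $w_k(E^k_{1j})p_kx$ for each fixed $k$, so it suffices to prove the double estimate for one block, i.e. for a unital $w\colon M_n\to B(Y)$ and $y\in Y$, with constants independent of $n$.

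For the single-block estimate, the key point is that although $w$ need not be a $*$-homomorphism, it is similar to one: since $M_n$ is a finite-dimensional $C^*$-algebra, averaging $w(u)^{*}w(\cdot)w(u)$ over the (compact) unitary group of $M_n$ — equivalently, using that any bounded unital representation of a finite-dimensional $C^*$-algebra on a Banach space is bounded in a way controlled by $\norm{w}$ — one does not get unitarizability on a general Banach space, so instead I would argue directly. Write $E^k_{1j}$ with the block index suppressed: the family $(E_{1j})_{j=1}^n$ consists of partial isometries with $E_{1j}E_{1j}^{*}=E_{11}$ and $\sum_j E_{1j}^{*}E_{1j}=1$, and $\sum_j E_{1j}x_j = $ the matrix whose first row is $(x_1,\dots,x_n)$, which has operator norm $(\sum_j|x_j|^2)^{1/2}$. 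Applying $w$ and using $\norm{w}$ gives, for the upper bound,
$$
\Bignorm{\sum_j \varepsilon_j\otimes w(E_{1j})y}_{\ra{X}}\le ?
$$
— here the subtlety is that the left side is a Rademacher average, not an operator norm, so a plain application of $\norm{w}$ does not suffice; this is precisely where property $(\alpha)$ re-enters. I expect the right tool is to view $\sum_j\varepsilon_j\otimes w(E_{1j})y$ as obtained by applying $w\otimes\mathrm{id}$ to an element of $M_n\otimes \ra{\mathbb{C}}$-type norm and to control the $\ra{X}$-norm by $\alpha(X)$ times the norm of $\sum_j E_{1j}y_j$ in $M_n(X)$ for an appropriate identification, after which $\norm{w}$ (or $\norm{w}^2$, accounting for passing through $w(E_{11})$ and back) gives the bound $C_X\norm{w}^2\norm{y}$.

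The lower bound should be dual/reciprocal: from $x = w(E_{11})w(E_{11})x$ and $w(E_{11})=w(E_{1j})w(E_{j1})^{?}$... more precisely $E_{11}=E_{1j}E_{j1}$ and $E_{j1}=E_{1j}^{*}$, so $w(E_{11})=\sum_j$-type reconstruction lets one recover $x$ (up to $w(E_{11})$, which is an idempotent of norm $\le\norm{w}$) from the family $(w(E_{1j})x)_j$, and running the same $(\alpha)$-estimate backwards yields the factor $C_X^{-1}\norm{w}^{-2}$. The main obstacle, and the step I would spend the most care on, is making precise the passage between the $\ra{X}$-valued average $\sum_j\varepsilon_j\otimes w(E_{1j})y$ and an honest norm in $M_n\otimes X$ (or $\ell^2_n\otimes X$) to which $\norm{w}$ can be legitimately applied; the reindexed Rademacher variables and property $(\alpha)$ are what bridge this gap, and getting the constant to depend only on $\alpha(X)$ (and not on $n$ or $N$) is the crux. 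Once the single-block double estimate is in hand with $n$-independent constants, reassembling via $\eta_k$ and property $(\alpha)$ finishes the proof, and the resulting $C_X$ absorbs a bounded number of applications of $\alpha(X)$.
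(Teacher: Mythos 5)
Your outline correctly locates the difficulty but does not resolve it, so there is a genuine gap at the central step. The problem, as you yourself note, is that $\sum_{k,j}\varepsilon_j\eta_k\, w(E_{1j}^{k})x = w\bigl(\sum_{k,j}\varepsilon_j\eta_k E_{1j}^{k}\bigr)x$, and the element $\sum_{j}\varepsilon_j E_{1j}^{k}$ has $C^*$-norm $\sqrt{n_k}$, so a direct application of $\norm{w}$ gives a bound growing like $\max_k\sqrt{n_k}$. Your proposed fix --- ``view the sum as $w\otimes\mathrm{id}$ applied to an element of $M_n\otimes\ra{\Cdb}$-type norm and control the $\ra{X}$-norm by $\alpha(X)$ times a norm in $M_n(X)$'' --- is exactly the step that needs an actual mechanism, and none is supplied; property $(\alpha)$ by itself only lets you change scalar coefficients in a multiple Rademacher average, it does not convert a Rademacher average into an operator norm to which $\norm{w}$ can be applied.

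The paper's resolution consists of three concrete moves that are missing from your plan. First, for each fixed choice of signs one multiplies the double sum by the single operator $w(\Delta_c)$, where $\Delta_c=\sum_{k,i}n_k^{-1/2}\theta_i E_{i1}^{k}$ has $C^*$-norm $1$; since $w$ is a homomorphism this converts the ``first row'' family $\bigl(w(E_{1j}^{k})x\bigr)$ into the full normalized family $\bigl(n_k^{-1/2}w(E_{ij}^{k})x\bigr)$ indexed by a third Rademacher variable $\theta_i$, at the cost of one factor $\norm{w}$; multiplying back by $\Delta_r=\sum_{k,j}n_k^{-1/2}\theta_j E_{1j}^{k}$ (using $\Delta_r\Delta_c=\sum_k E_{11}^{k}$) recovers the double sum at the cost of another $\norm{w}$. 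Second, for the resulting triple sum one observes that $\sum_{k,j,i}\varepsilon_j\eta_k\theta_i u_{ij}^{k}\, w(E_{ij}^{k})x = w(VUW)x$, where $V,W$ are diagonal unitaries built from the signs and $U=[u_{ij}^{k}]$ is any unitary of the algebra, so that unitality pins this quantity between $\norm{w}^{-1}\norm{x}$ and $\norm{w}\norm{x}$ pointwise in the signs. Third --- and this is where property $(\alpha)$ genuinely enters --- one chooses $U$ to be the Fourier-type unitary with all entries of modulus $n_k^{-1/2}$, so that passing between the coefficients $u_{ij}^{k}$ and $\vert u_{ij}^{k}\vert=n_k^{-1/2}$ in the triple Rademacher average costs only a constant $C_X$ depending on $\alpha(X)$. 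Without the block-normalized elements $\Delta_r,\Delta_c$ and the constant-modulus unitary, neither your upper nor your lower bound goes through with constants independent of $n$ and $N$. (Also, your preliminary reduction to a single block is unnecessary and itself unjustified as stated: the paper treats all blocks simultaneously, the variable $\eta_k$ being absorbed into the diagonal unitary $V$ rather than used to disassemble the direct sum via property $(\alpha)$.)
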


\begin{proof}
Let $\varepsilon_j=\pm 1, \theta_i=\pm 1$ and $\eta_k=\pm 1$ for $j,i,k\geq 1$.
We let
$$
\Delta_r = \sum_{k=1}^{N}\sum_{j=1}^{n_k} n_{k}^{-\frac{1}{2}}\theta_j\, E_{1j}^{k} \qquad\hbox{and}\qquad 
\Delta_c = \sum_{k=1}^{N}\sum_{i=1}^{n_k} n_{k}^{-\frac{1}{2}}\theta_i\, E_{i1}^{k}.
$$
It is plain that
$$
\norm{\Delta_r} = \norm{\Delta_c}=1 \qquad\hbox{and}\qquad \Delta_r\Delta_c=\sum_{k=1}^{N} E_{11}^{k}.
$$
Since $w$ is a homomorphism, we have
\begin{align*}
w(\Delta_c)\Bigl(\sum_{k,j}\varepsilon_j\eta_k \, w(E_{1j}^{k})x\Bigr)\, & = \,
\Bigl(\sum_{k,i} n_{k}^{-\frac{1}{2}}\theta_i\, w(E_{i1}^{k})\Bigr)\,
\Bigl(\sum_{k,j}\varepsilon_j\eta_k \, w(E_{1j}^{k})x\Bigr)\\ &=\,
\sum_{k,j,i} n_{k}^{-\frac{1}{2}}\varepsilon_j\eta_k\theta_i \, w(E_{ij}^{k})x.
\end{align*}
We deduce that
\begin{equation}\label{NEq2}
\Bignorm{\sum_{k,j,i} n_{k}^{-\frac{1}{2}}\varepsilon_j\eta_k\theta_i \, w(E_{ij}^{k})x}\,\leq\,\norm{w} 
\Bignorm{\sum_{k,j}\varepsilon_j\eta_k \, w(E_{1j}^{k})x}.
\end{equation}
Continuing the above calculation, we obtain further that 
\begin{align*}
w(\Delta_r)\Bigl(\sum_{k,j,i} n_{k}^{-\frac{1}{2}}\varepsilon_j\eta_k\theta_i \, w(E_{ij}^{k})x\Bigr)\, & =\, 
w(\Delta_r\Delta_c)\Bigl(\sum_{k,j}\varepsilon_j\eta_k \, w(E_{1j}^{k})x\Bigr)\\ &=\, 
\Bigl(\sum_k w(E_{11}^{k})\Bigr)\,\Bigl(\sum_{k,j}\varepsilon_j\eta_k \, w(E_{1j}^{k})x\Bigr)\\ & =\, 
\sum_{k,j}\varepsilon_j\eta_k \, w(E_{1j}^{k})x.
\end{align*}
Consequently,
\begin{equation}\label{NEq3}
\Bignorm{\sum_{k,j}\varepsilon_j\eta_k \, w(E_{1j}^{k})x}\,\leq\,\norm{w} 
\Bignorm{\sum_{k,j,i} n_{k}^{-\frac{1}{2}}\varepsilon_j\eta_k\theta_i \, w(E_{ij}^{k})x}.
\end{equation}
Now let $U=[u_{ij}^{1}]\oplus\cdots\oplus [u_{ij}^{N}]$ be a fixed unitary of $\mathop{\oplus}\limits_{k=1}^{N}M_{n_k}$.
Then consider the diagonal (unitary) elements 
$$
V=\sum_{k=1}^{N}\sum_{i=1}^{n_k}\eta_k \theta_i\, E_{ii}^{k} \qquad\hbox{and}\qquad 
W= \sum_{k=1}^{N}\sum_{j=1}^{n_k}\varepsilon_j\, E_{jj}^{k}.
$$
Then $VUW$ is a unitary and
$$
w(VUW)x\,=\,\sum_{k,j,i}\varepsilon_j \eta_k \theta_i\, u_{ij}^{k}\, w(E_{ij}^{k})x.
$$
Since $w$ is unital, we deduce that
\begin{equation}\label{NEq4}
\norm{w}^{-1}\norm{x}\leq\,\Bignorm{\sum_{k,j,i}\varepsilon_j \eta_k \theta_i\, u_{ij}^{k} \, w(E_{ij}^{k})x}\, \leq \norm{w}\norm{x}.
\end{equation}
Let us apply the above with the special unitary $U$ defined by 
$$
u_{ij}^{k}\,=\,n_{k}^{-\frac{1}{2}}\, {\rm exp} 
\Bigl\{\frac{2\pi\sqrt{-1}}{n^k}(ij)\Bigr\},
\qquad k=1,\ldots, N,\ i,j=1,\ldots, n_k.
$$
Its main feature is that $\vert u_{ij}^{k}\vert = n_{k}^{-\frac{1}{2}}$
for any $i,j,k$.
Since $X$ has property $(\alpha)$, this implies that for some 
constant $C_X\geq 1$ only depending on $\alpha(X)$, we have
$$
\Bignorm{\sum_{k,j,i} n_{k}^{-\frac{1}{2}} \varepsilon_j \otimes \eta_k\otimes 
\theta_i\otimes w(E_{ij}^{k})x}_{\rarara{X}}\,\leq C_X\, 
\Bignorm{\sum_{k,j,i}\varepsilon_j \otimes\eta_k\otimes \theta_i\otimes u_{ij}^{k} 
w(E_{ij}^{k})x}_{\rarara{X}}
$$
and
$$
\Bignorm{\sum_{k,j,i}\varepsilon_j \otimes\eta_k\otimes \theta_i\otimes u_{ij}^{k} 
w(E_{ij}^{k})x}_{\rarara{X}}
\,\leq C_X\, 
\Bignorm{\sum_{k,j,i}n_{k}^{-\frac{1}{2}} \varepsilon_j 
\otimes\eta_k\otimes \theta_i\otimes w(E_{ij}^{k})x}_{\rarara{X}} .
$$
Combining  with (\ref{NEq2}), (\ref{NEq3}) and (\ref{NEq4}), we get the result.
\end{proof}

For any integer $m\geq 1$, we let
$$
\sigma_{m,X}\colon M_{m}\longrightarrow B\bigl({\rm Rad}_{m}(X)\bigr)
$$
be the canonical homomorphism defined by letting $\sigma_{m,X}(a) =a\otimes I_X$ for any $a\in M_m$.
According to \cite[Lem. 4.3]{KL}, the mappings $\sigma_{m,X}$ are uniformly $R$-bounded. 
The same proof shows they are actually uniformly matricially $R$-bounded. We record this fact for further use.

\begin{lemma}\label{NAlpha}
Let $X$ be a Banach space with property $(\alpha)$. Then
$$
D_X : =\sup_{m\geq 1}\matnormR{\sigma_{m,X}}\,<\,\infty\,.
$$
\end{lemma}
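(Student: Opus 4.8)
The plan is to run the argument of \cite[Lem. 4.3]{KL}, keeping track of the matricial structure and of the constants. Fix $m\geq 1$; according to Definition \ref{4Mat} we must bound, uniformly in $P\geq 1$, the quantity
\[
\Bignorm{\sum_{p,q=1}^{P}\varepsilon_p\otimes\sigma_{m,X}(b_{pq})\,\xi_q}_{\rara{X}}
\]
by $\bignorm{[b_{pq}]}_{M_P(M_m)}$ times $\bignorm{\sum_q\varepsilon_q\otimes\xi_q}_{\rara{X}}$, for $[b_{pq}]\in M_P(M_m)$ and $\xi_1,\dots,\xi_P\in{\rm Rad}_m(X)$. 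Writing $\xi_q=\sum_{l=1}^{m}\theta_l\otimes x_{ql}$ and expanding $\sigma_{m,X}(b_{pq})=b_{pq}\otimes I_X$, one gets $\sum_{p,q}\varepsilon_p\otimes\sigma_{m,X}(b_{pq})\xi_q=\sum_{p,k}\varepsilon_p\otimes\theta_k\otimes(c\mathbf{x})_{(p,k)}$, where, under the canonical identification $M_P(M_m)=M_{Pm}$, the matrix $c=[(b_{pq})_{kl}]\in M_{Pm}$ (rows indexed by $(p,k)$, columns by $(q,l)$) satisfies $\norm{c}_{M_{Pm}}=\bignorm{[b_{pq}]}_{M_P(M_m)}$ and $\mathbf{x}=(x_{ql})_{q,l}$. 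Since likewise $\sum_q\varepsilon_q\otimes\xi_q=\sum_{q,l}\varepsilon_q\otimes\theta_l\otimes x_{ql}$, the lemma reduces to the estimate
\[
\Bignorm{\sum_{p,k}\varepsilon_p\otimes\theta_k\otimes(c\mathbf{x})_{(p,k)}}_{\rara{X}}\ \leq\ D_X\,\norm{c}_{M_{Pm}}\ \Bignorm{\sum_{q,l}\varepsilon_q\otimes\theta_l\otimes x_{ql}}_{\rara{X}},
\]
with a constant $D_X$ depending neither on $m$, nor on $P$, nor on $c$.

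I would obtain this through three comparisons. First, since $X$ has property $(\alpha)$, the iterated average $\bignorm{\sum_{p,k}\varepsilon_p\otimes\theta_k\otimes z_{(p,k)}}_{\rara{X}}$ is equivalent, with constants depending only on $\alpha(X)$ and uniformly in $P,m$, to the single Rademacher average $\bignorm{\sum_{p,k}\delta_{(p,k)}\otimes z_{(p,k)}}_{\ra{X}}$ obtained by reindexing the pairs $(p,k)$ by one Rademacher sequence $(\delta_{(p,k)})$; this is immediate from $(\ref{NAlpha(X)})$ by writing $\delta_{(p,k)}=\varepsilon_p\theta_k\rho_{(p,k)}$ with an auxiliary Rademacher sequence $(\rho_{(p,k)})$, conditioning on $\rho$, and applying property $(\alpha)$ with the signs $t_{(p,k)}=\rho_{(p,k)}$ in both directions. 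Second, property $(\alpha)$ forces $X$ to have finite cotype, so Rademacher and Gaussian averages on $X$ are equivalent (see \cite[Chap. 3]{Pis3}), whence $\bignorm{\sum_{p,k}\delta_{(p,k)}\otimes z_{(p,k)}}_{\ra{X}}$ is equivalent to the Gaussian average $\bignorm{\sum_{p,k}g_{(p,k)}\otimes z_{(p,k)}}_{G(X)}$, with a constant controlled by the cotype of $X$, hence ultimately by $\alpha(X)$. Third, in the Gaussian picture Lemma \ref{2Inv} applies directly: since $\sum_{p,k}g_{(p,k)}\otimes(c\mathbf{x})_{(p,k)}=\sum_{(p,k),(q,l)}c_{(p,k),(q,l)}\,g_{(p,k)}\otimes x_{ql}$, we get $\bignorm{\sum_{p,k}g_{(p,k)}\otimes(c\mathbf{x})_{(p,k)}}_{G(X)}\leq\norm{c}_{M_{Pm}}\,\bignorm{\sum_{q,l}g_{(q,l)}\otimes x_{ql}}_{G(X)}$, with no dependence on $Pm$. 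Running the first two comparisons forward, applying Lemma \ref{2Inv}, then running the first two backwards yields the displayed estimate with $D_X$ bounded by the square of the product of the $(\alpha)$-constant of $X$ and its Rademacher--Gaussian comparison constant, i.e. by a function of $\alpha(X)$ alone, independent of $m$. Taking the supremum over $m$ gives the lemma.

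The heart of the matter is the first comparison: it is exactly the point where property $(\alpha)$, and not merely the finite cotype of $X$, is used, and it is what lets one collapse the two layers of Rademacher averages so that the matrix $c$ can be transferred into the Gaussian setting, where Lemma \ref{2Inv} provides a bound free of any dependence on the size of $c$. This is precisely the mechanism of \cite[Lem. 4.3]{KL}; once it is in place, the passage from block-diagonal matrices $[b_{pq}]$ (which already gives the $R$-bound of $\sigma_{m,X}$) to arbitrary matrices $[b_{pq}]\in M_P(M_m)$ (which gives the \emph{matricial} $R$-bound) costs nothing further. I do not anticipate any genuine difficulty beyond the bookkeeping needed to verify that none of the three comparison constants depends on $P$ or $m$.
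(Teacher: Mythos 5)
Your proof is correct and is essentially the argument the paper has in mind: the paper gives no proof of Lemma \ref{NAlpha}, simply citing \cite[Lem. 4.3]{KL} and asserting that "the same proof" yields the matricial version, and your reconstruction (collapse the double Rademacher average to a single one over the product index set via property $(\alpha)$, pass to Gaussians using finite cotype, then apply Lemma \ref{2Inv} to the $Pm\times Pm$ matrix $c$, whose norm equals $\norm{[b_{pq}]}_{M_P(M_m)}$) is exactly that mechanism. No gaps.
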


\begin{proof}[Proof of Theorem \ref{NMain}]
Throughout we let $w\colon A\to B(X)$ be a bounded homomorphism. By standard arguments, 
it will suffice to consider the case when $w$ is nondegenerate. The proof will be divided into three steps.

\smallskip\noindent
{\it First step: we assume that $A$ is finite-dimensional,  $w$ is unital and  $\norm{w}=1$}. 
Thus (\ref{NFD}) holds for some positive integers 
$n_1,\ldots, n_N$. Let $m=n_1+\cdots+n_N$, so that $A\subset M_m$ in a canonical way.
Let $(\varepsilon_{jk})_{j,k\geq 1}$ be a doubly indexed family of independent Rademacher variables, and let
$$
S\colon X\longrightarrow {\rm Rad}_{m}(X)
$$
be defined by 
$$
S(x)=\sum_{k=1}^{N}\sum_{j=1}^{n_k} \varepsilon_{jk}\otimes w(E_{1j}^{k})x,\qquad x\in X.
$$
Let $Y\subset {\rm Rad}_{m}(X)$ be the range of $S$. According to Lemma \ref{NEq1} and the 
assumption that $X$ has property $(\alpha)$, $S$ is an isomorphism onto $Y$ and there exist 
a constant $B_X\geq 1$ only depending on $\alpha(X)$ such that 
\begin{equation}\label{NS}
\norm{S}\leq B_X\qquad\hbox{and}\qquad \norm{S^{-1}\colon Y\to X}\leq B_X.
\end{equation}
Let $a=[a_{ij}^{1}]\oplus\cdots\oplus [a_{ij}^{N}]\in A$. For any  $x\in X$, we have
$$
\bigl[\sigma_{m,X} (a)\bigr]\bigl(S(x)\bigr)=\,\sum_{k,j,i}\varepsilon_{ik}\otimes a_{ij}^{k} w(E_{1j}^{k})x.
$$
On the other hand  we have for any $k,i$ that
$E_{1i}^{k} a =\sum_j a_{ij}^{k} E_{1j}^{k}$. 
Hence
$$
w(E_{1i}^{k}) w(a)x = \sum_j a_{ij}^{k} w(E_{1j}^{k})x,
$$
and then
$$
\sum_{k,j,i}\varepsilon_{ik}\otimes a_{ij}^{k} w(E_{1j})x = \sum_{k,i}\varepsilon_{ik}\otimes 
w(E_{1i}^{k}) w(a)x = S\bigl(w(a)x\bigr).
$$
This shows that $\sigma_{m,X}(a) S=S w(a)$. Thus $Y$ is invariant under the action of ${\sigma_{m,X}}_{\vert A}$ and if we let 
$\sigma\colon A\to B(Y)$ be the homomorphism induced by $\sigma_{m_,X}$, we have shown that 
$$
w(a) = S^{-1} \sigma(a) S,\qquad a\in A.
$$
Appealing to (\ref{NS}), this implies that 
$$
\matnormR{w}\leq\norm{S^{-1}}\norm{S}\matnormR{\sigma}\leq \norm{S^{-1}}\norm{S}\matnormR{\sigma_{m,X}}
\leq B_{X}^{2} D_X.
$$

\smallskip\noindent
{\it Second step: we merely assume that $A$ is finite-dimensional and $w$ is unital}.
Let $\U$ be the unitary group of $A$
and let $d\tau$ denote the Haar measure on $\U$.
We define a new norm on $X$ by letting
$$
\triple{x}=\,\Bigl(\int_{\footnotesize{\U}}\norm{w(U)x}^{2}\, d\tau(U)\,\Bigr)^{\frac{1}{2}}, \qquad x\in X.
$$
Since $w$ is unital, this is an equivalent norm on $X$ and 
\begin{equation}\label{NTriple}
\norm{w}^{-1}\norm{x}\,\leq\,\triple{x}\,\leq\,\norm{w}\norm{x},\qquad x\in X.
\end{equation}
Let $\widetilde{X}$ be the Banach space $(X,\triple{\ \cdotp\ })$ and let 
$\widetilde{w}\colon A\to B(\widetilde{X})$ be induced by $w$.
It readily follows from 
(\ref{NTriple}) that 
$$
\matnormR{w}\leq\norm{w}^{2}\matnormR{\widetilde{w}}.
$$
Using Fubini's Theorem it is easy to see that we further have
$$
\alpha(\widetilde{X})\leq \alpha(X).
$$
The first step  shows that we have
$\matnormR{\widetilde{w}} \leq K$  for some constant $K$ only depending on 
$\alpha(\widetilde{X})$. The above observation shows that $K$ does 
actually depend only on $\alpha(X)$, and we therefore obtain an estimate 
$\matnormR{w}\leq K_X\norm{w}^{2}$.

\smallskip\noindent
{\it Third step: $A$ is infinite dimensional and $w$ is nondegenerate}.
We will use second duals in a rather standard way. However the fact that $X$ 
may not be reflexive leads to some technicalities. Observe that using 
Connes's Theorem \cite{Co} and arguing e.g. as in \cite[p. 135]{Pis2} (see also \cite{L}), 
we may assume that there exists a 
directed net $(A_\lambda)_\lambda$ of finite dimensional von Neumann subalgebras of $A^{**}$ such that
$$
A^{**}=\,\overline{\bigcup_\lambda A_\lambda}^{w^{*}}.
$$
Let $u\colon A\to B(X^{**})$ be the homomorphism defined by letting $u(a)=w(a)^{**}$ for any $a\in A$. 
According to \cite[Lem. 2.3]{KL}, there exists a 
(necessarily unique) $w^{*}$-continuous homomorphism $\widehat{u}\colon A^{**}\to B(X^{**})$ 
extending $u$. We claim that
$$
\widehat{u}(1)x=x,\qquad x\in X.
$$
Indeed let $(a_t)_t$ be a contractive approximate identity of $A$ 
and note that since $w$ is nondegenerate, $w(a_t)$ converges strongly to $I_X$.
This implies that $u(a_t)x=w(a_t)x\to x$. Since $a_t\to 1$ in the $w^*$-topology of $A^{**}$, we also
have that $u(a_t)x\to \widehat{u}(1)x$  weakly, which yields the above equality.

Let $Z\subset X^{**}$ be  the range of the projection $\widehat{u}(1)\colon X^{**}\to X^{**}$. 
The above property means that $X \subset Z$.
For any $\lambda$, we let $\widehat{u}_\lambda\colon A_\lambda\to B(Z)$ 
denote the unital homomorphism induced by the restriction of $\widehat{u}$ to $A_\lambda$. 
Since $X$ has property $(\alpha)$, its second dual $X^{**}$ has property $(\alpha)$ as well and
$\alpha(X^{**})=\alpha(X)$, by (\ref{NSecond}). 
Moreover $\norm{\widehat{u}_{\lambda}}\leq \norm{\widehat{u}}=\norm{u}=\norm{w}$. 
Hence by the second step of this proof,
we have a uniform estimate
\begin{equation}\label{NUniform}
\matnormR{\widehat{u}_\lambda}\leq  K_X\norm{w}^{2}.
\end{equation}
Consider $[a_{ij}]\in M_n(A)$ and assume that $\norm{[a_{ij}]}\leq 1$. 
Let us regard $[a_{ij}]$ as an element of $M_n(A^{**})$. Then by Kaplansky's density 
Theorem (see e.g. \cite[Thm. 5.3.5]{KR}), there exist a net $(\lambda_s)_s$ and, for any $s$, a matrix
$[a_{ij}^{s}]$ belonging to the unit ball of $M_n(A_{\lambda_s})$, such that for any 
$i,j=1,\ldots,n$, $a_{ij}^{s}\to a_{ij}$ in the $w^{*}$-topology of $A^{**}$. 
Then for any $x_1,\ldots,x_n$ in $X$ and 
$\varphi_1,\ldots,\varphi_n$ in $X^{*}$, we have
$$
\lim_s \sum_{i,j} \langle \varphi_i, \widehat{u}_{\lambda_{s}}(a_{ij}^{s})x_j\rangle\, =\, 
\sum_{i,j} \langle \varphi_i, w(a_{ij})x_j\rangle\,.
$$
Applying (\ref{NUniform}) we deduce  that
$$
\Bignorm{\sum_{i,j}\varepsilon_i\otimes w(a_{ij}) x_j}_{{\rm Rad}(X)}\,\leq \, K_X\norm{w}^{2}
\Bignorm{\sum_j\varepsilon_j\otimes x_j}_{{\rm Rad}(X)}.
$$
\end{proof}

\begin{remark} 
When $X=H$ is a Hilbert space, the above proof yields $K_H=1$, and we recover the classical 
result that any bounded homorphism $u\colon A\to B(H)$ on a nuclear $C^*$-algebra
is completely bounded, with $\cbnorm{u}\leq \norm{u}^{2}$ (see \cite{B,C, Pis2}). 
\end{remark}

\begin{remark} Let $\norm{\ }_\gamma$ be a cross-norm on $\ell^2\otimes\ell^2$ (in the sense that 
$\norm{z_1\otimes z_2}_\gamma=\norm{z_1}\norm{z_2}$ for all $z_1,z_2$ in $\ell^2$)
and let $\ell^2\otimes_\gamma\ell^2$ denote the completion of the normed space 
$(\ell^2\otimes\ell^2, \norm{\ }_\gamma)$. 
Assume moreover that any bounded operator $a\colon\ell^2\to\ell^2$ 
has a bounded tensor extension
$a\otimes I_{\ell^2}\colon 
\ell^2\otimes_\gamma\ell^2\to\ell^2\otimes_\gamma\ell^2$. 
It follows from the above results that if the Banach space
$\ell^2\otimes_\gamma\ell^2$ has property $(\alpha)$, 
then $\norm{\ }_\gamma$ is equivalent to the
Hilbert tensor norm $\norm{\ }_2$, and hence 
$$
\ell^2\otimes_\gamma\ell^2\,\approx\, S^2,
$$
the space of Hilbert-Schmidt operators on $\ell^2$.
Indeed by the closed graph theorem, there is a constant $K\geq 1$ such that 
$\norm{a\otimes I_{\ell^{2}}}\leq K\norm{a}$ for any 
$a\in B(\ell^2)$. Let
$w\colon B(\ell^2)\to B(\ell^2\otimes_\gamma\ell^2)$ be the 
bounded homomorphism defined by $w(a) =a\otimes I_{\ell^2}$. 
According to Lemma \ref{NEq1}, there is a constant $C\geq 1$ such that for any $n\geq 1$,
$$
C^{-1}\norm{x}\,\leq\, \Bignorm{\sum_{k=1}^n\varepsilon_k
\otimes w(E_{1 k})x}_{\ra{\ell^2\otimes_\gamma\ell^2}}\,\leq\, C\norm{x}
$$
whenever $x$ is a linear combination of the $e_i\otimes e_j$, with $1\leq i,j\leq n$.
For any scalars 
$(s_{ij})_{1\leq i,j\leq n}$ and any $\varepsilon_k=\pm 1$, we have
$$
\sum_{k=1}^n\varepsilon_k w(E_{1k}) \Bigl(\sum_{i,j=1}^{n} s_{ij}\,
 e_i\otimes e_j\Bigr) = e_1\otimes \Bigl(\sum_{i,j=1}^n\varepsilon_i  s_{ij} e_j\Bigr).
$$
Hence for $x=\sum_{i,j=1}^{n} s_{ij}\, e_i\otimes e_j$, we have
\begin{align*}
\Bignorm{\sum_{k=1}^n\varepsilon_k
\otimes w(E_{1 k})x}_{\ra{\ell^2\otimes_\gamma\ell^2}} \, & =\, \Bignorm{
\sum_{i=1}^n\varepsilon_i\otimes \Bigl(\sum_{j=1}^n s_{ij}\, e_j\Bigr)}_{\ra{\ell^2}}\\ & =\, 
\Bigl(\sum_{i=1}^n\Bignorm{\sum_{j=1}^n s_{ij} e_j}^{2}\Bigr)^{\frac{1}{2}}\\ & =\,\Bigl(\sum_{i,j=1}^{n}\vert s_{ij}\vert^{2}\Bigr)^{\frac{1}{2}}\,=\, \norm{x}_{2}.
\end{align*}
This shows that $\norm{x}\approx\norm{x}_2$ and the result follows by density. 

That result is a variant of \cite[Thm 2.2]{KP}, a classical unconditional characterization of $S^2$.
\end{remark}

\medskip
\section{Examples and applications}
In the case when $X$ has property $(\alpha)$, Theorem \ref{NMain} leads to a simplied version of Theorem
\ref{4MainThm}, as follows.

\begin{corollary}\label{5Alpha} Let $G$ be an amenable group and assume that $X$
has property $(\alpha)$. Let $\pi\colon G\to B(X)$ be a bounded
representation. Then $\pi$ is $R$-bounded if and only if it
extends to a bounded homomorphism $w\colon
C^{*}_{\lambda}(G)\to B(X)$.
\end{corollary}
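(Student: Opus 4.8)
The plan is to read this off from Theorems \ref{4MainThm} and \ref{NMain}, using the fact that a Banach space with property $(\alpha)$ has finite cotype, so that on $X$ (and hence on $B(X)$) the notions of $R$-boundedness and $\gamma$-boundedness — and likewise their matricial versions — coincide.

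First I would treat the ``only if'' implication. If $\{\pi(t):t\in G\}$ is $R$-bounded, then it is $\gamma$-bounded (we always have $\gamma(F)\leq R(F)$, and here the two are in fact equivalent), so $\pi$ is a $\gamma$-bounded representation and Theorem \ref{4MainThm}, implication (i)$\Rightarrow$(ii), produces the desired bounded homomorphism $w\colon C^{*}_{\lambda}(G)\to B(X)$ extending $\pi$.

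For the converse, suppose $w\colon C^{*}_{\lambda}(G)\to B(X)$ is a bounded homomorphism with $w\circ\sigma_\lambda=\sigma_\pi$. Since $G$ is amenable, $C^{*}_{\lambda}(G)=C^{*}(G)$ is a nuclear $C^{*}$-algebra (see e.g. \cite[(1.31)]{Pat}), so Theorem \ref{NMain} applies and shows that $w$ is matricially $R$-bounded, in particular $R$-bounded. Because $\norm{\sigma_\lambda(k)}\leq\norm{k}_1$, the homomorphism $\sigma_\pi=w\circ\sigma_\lambda$ maps the unit ball of $L^{1}(G)$ into the set $\{w(b)\,:\,b\in C^{*}_{\lambda}(G),\ \norm{b}\leq 1\}$; hence $\sigma_\pi$ is $R$-bounded, and therefore $\gamma$-bounded by finite cotype. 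Lemma \ref{2Equiv} then yields that $\pi$ itself is $\gamma$-bounded, equivalently $R$-bounded, which is the claim.

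The substance of the argument is concentrated in the converse direction: the key input is the nuclearity of $C^{*}_{\lambda}(G)$ — which is exactly where amenability re-enters — combined with Ricard's Theorem \ref{NMain}. Everything else (passing between Gaussian and Rademacher averages, going from $\sigma_\pi$ back to $\pi$ via Lemma \ref{2Equiv}, and restricting a bounded map to unit balls) is routine once those two facts are available, so I do not expect a genuine obstacle beyond correctly assembling these ingredients.
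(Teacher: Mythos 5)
Your proposal is correct and follows essentially the same route as the paper: the forward direction is the implication (i)$\Rightarrow$(ii) of Theorem \ref{4MainThm}, and the converse combines nuclearity of $C^{*}_{\lambda}(G)$ for amenable $G$ with Theorem \ref{NMain} and the equivalence of $R$- and $\gamma$-boundedness on spaces of finite cotype. The paper simply cites Theorem \ref{4MainThm} for the final passage from the $\gamma$-boundedness of $w$ back to that of $\pi$, whereas you unfold that step through Lemma \ref{2Equiv}; this is exactly how the paper proves that implication anyway.
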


\begin{proof} Since $G$ is amenable, the $C^*$-algebra $C^{*}_{\lambda}(G)$ is nuclear. Hence 
any bounded homomorphism $w\colon C^{*}_{\lambda}(G)\to B(X)$ is $R$-bounded, by 
Theorem \ref{NMain}. The equivalence therefore follows from Theorem \ref{4MainThm}.
\end{proof}

The following is a noncommutative generalization of the fact that if $G$ is an infinite abelian group $G$
and $p\not=2$, there exist bounded functions $\widehat{G}\to \Cdb$ which are not bounded Fourier 
multipliers on $L^p(G)$.

\begin{corollary}\label{5Regular} Let $G$ be an infinite amenable group and let $1\leq p <\infty$.
Let $\lambda_p\colon G\to B(L^{p}(G))$ be the `left regular
representation' defined by letting $\bigl[\lambda_p(t)f](s)
=f(t^{-1}s)$ for any $f\in L^{p}(G)$. Then $\lambda_p$ extends to
a bounded homomorphism $C^{*}_\lambda(G)\to B(L^{p}(G))$ (if and) only if
$p=2$.
\end{corollary}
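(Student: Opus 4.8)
The plan is to reduce the nontrivial implication to a short explicit computation with translation operators, via Corollary \ref{5Alpha}. The direction $p=2\Rightarrow$ extension is immediate: then $\lambda_2=\lambda$ and, by construction, $C^{*}_{\lambda}(G)$ is a $C^{*}$-subalgebra of $B(L^{2}(G))$, so the inclusion map is the desired bounded homomorphism. For the converse, one first checks that $\lambda_p$ is a bounded representation in the sense of this paper: it is strongly continuous, each $\lambda_p(t)$ is an isometry by left invariance of the Haar measure, and $\sigma_{\lambda_p}(k)=k*\cdotp$ for $k\in L^{1}(G)$. Moreover, for $1\le p<\infty$ the space $L^{p}(G)$ is a Banach lattice of finite cotype, hence has property $(\alpha)$. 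Since $G$ is amenable, Corollary \ref{5Alpha} then shows that if $\lambda_p$ extends to a bounded homomorphism $C^{*}_{\lambda}(G)\to B(L^{p}(G))$, the set $\{\lambda_p(t)\,:\,t\in G\}$ must be $R$-bounded; write $R$ for its $R$-bound. It remains to show that, for $G$ infinite, this forces $p=2$.

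The key step is to test the $R$-boundedness inequality against indicators of pairwise disjoint translates. The geometric input is elementary: since $G$ is infinite, for every $N\ge 1$ there exist a neighbourhood $V$ of $e$ with $0<\vert V\vert<\infty$ and elements $t_1,\ldots,t_N\in G$ such that $t_1V,\ldots,t_NV$ are pairwise disjoint (take $V=\{e\}$ when $G$ is discrete; otherwise shrink a relatively compact neighbourhood of $e$, treating the non-compact and the compact-infinite cases separately). Put $f=\vert V\vert^{-1/p}\chi_V$, so $\norm{f}_{p}=1$. Then $\lambda_p(t_k)f=\vert V\vert^{-1/p}\chi_{t_kV}$, and since these functions are disjointly supported with $\vert t_kV\vert=\vert V\vert$ one gets $\bignorm{\sum_k\varepsilon_k\otimes\lambda_p(t_k)f}_{\ra{L^{p}(G)}}=N^{1/p}$, whereas the constant family $(f)_k$ gives $\bignorm{\sum_k\varepsilon_k\otimes f}_{\ra{L^{p}(G)}}=N^{1/2}$. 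Feeding this into the $R$-boundedness estimate for $\{\lambda_p(t_k)\}_k$ yields $N^{1/p}\le R\,N^{1/2}$ for all $N$, hence $p\ge 2$. Running the same test in reverse — with inputs $f_k=\vert V\vert^{-1/p}\chi_{t_kV}$, disjointly supported so that $\bignorm{\sum_k\varepsilon_k\otimes f_k}_{\ra{L^{p}(G)}}=N^{1/p}$, and observing that $\lambda_p(t_k^{-1})f_k=f$ for every $k$, whence $\bignorm{\sum_k\varepsilon_k\otimes\lambda_p(t_k^{-1})f_k}_{\ra{L^{p}(G)}}=N^{1/2}$ — yields $N^{1/2}\le R\,N^{1/p}$ for all $N$, hence $p\le 2$. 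Combining the two inequalities gives $p=2$.

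I expect the only point requiring a little care to be the covering/separation lemma producing, for each $N$, a neighbourhood of $e$ of positive finite Haar measure together with $N$ points whose $V$-translates are pairwise disjoint; granting that, the rest reduces to left invariance of the Haar measure, the $\ell^p$-like behaviour of disjointly supported vectors under Rademacher averaging, and the single appeal to Corollary \ref{5Alpha}. In particular, and in contrast with the classical abelian argument recalled in the introduction, this approach needs no harmonic analysis on $\widehat{G}$ and no appeal to \cite{La}.
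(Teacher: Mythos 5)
Your proof is correct, and it follows the paper's skeleton exactly up to the last step: both arguments note that $L^p(G)$ has property $(\alpha)$, invoke Corollary \ref{5Alpha} to conclude that $\{\lambda_p(t):t\in G\}$ must be $R$-bounded, and then argue that $R$-boundedness of the translations forces $p=2$. The difference is in how that final implication is handled. The paper simply cites \cite[Prop.\ 2.11]{DR2} (with a parenthetical remark that the abelian proof there adapts to general groups), whereas you prove it from scratch by testing the $R$-boundedness inequality on indicators of $N$ pairwise disjoint translates $t_1V,\ldots,t_NV$: the disjointly supported outputs give $N^{1/p}\le R\,N^{1/2}$, and the reversed test (disjoint inputs, coinciding outputs) gives $N^{1/2}\le R\,N^{1/p}$, forcing $p=2$. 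Your separation lemma is unproblematic: for any infinite Hausdorff locally compact group one picks $N$ distinct points and a relatively compact neighbourhood $V$ of $e$ with $VV^{-1}$ disjoint from $\{t_i^{-1}t_j: i\ne j\}$, and the norm computations (deterministic value $N^{1/p}$ for disjointly supported unit vectors, $N^{1/2}$ for a repeated vector, by left invariance of Haar measure) are exact. What your version buys is a self-contained, citation-free argument that works verbatim for non-abelian $G$ and uses no harmonic analysis on $\widehat{G}$; what the paper's version buys is brevity, at the cost of leaning on an external result stated only for abelian groups and asserting that its proof transfers.
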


\begin{proof} Assume that $\lambda_p$ has an extension to
$C^{*}_\lambda(G)$. Since $L^p(G)$ has property $(\alpha)$, Corollary \ref{5Alpha} ensures that 
$\{\lambda_p(t)\, :\ t\in G\}$ is $R$-bounded. According to \cite[Prop. 2.11]{DR2}, this implies that 
$p=2$. (The latter paper considers abelian groups only but the proof works as well in the non abelian case.)
\end{proof}

\bigskip
We will now focus on the three classical groups $\Zdb$, $\Rdb$ and $\Tdb$. 
We wish to mention the remarkable work of Berkson, Gillespie and Muhly 
\cite{BG, BGM} on bounded representations of these groups on UMD Banach spaces. 
Roughly speaking, their results
say that when $G=\Zdb$, $\Rdb$ or $\Tdb$, and $X$ is UMD, any bounded 
representation $\pi\colon G\to B(X)$   
gives rise to a spectral family $E_\pi$ of projections allowing a natural spectral decomposition of $\pi$ 
(see \cite{BG, BGM} for a precise statement).
According to Remark \ref{4Abelian}, our results imply that if $\pi\colon G\to B(X)$ 
is actually $\gamma$-bounded, then $E_\pi$ is induced by a spectral measure.

Representations $\pi\colon \Zdb\to B(X)$ are of the form $\pi(k)=T^{k}$, 
where $T\colon X\to X$ is a bounded invertible operator. Furthermore 
$C^{*}_\lambda(\Zdb)$ coincides with $C(\Tdb)$. In the next statement, 
we let $\kappa\in C(\Tdb)$ be the function defined by $\kappa(z)=z$, and 
we let $\sigma(T)$ denote the spectrum of $T$. We refer to \cite{D} for 
some background on spectral decompositions and scalar type operators.

\begin{proposition}\label{5T} Let $T\colon X\to X$ be a bounded invertible operator.

\begin{itemize}
\item [(1)] The set $\{ T^k\, :\, k\in\Zdb\}$ is $\gamma$-bounded if and only if 
there exists a $\gamma$-bounded unital homomorphim $w\colon C(\Tdb)\to B(X)$ such that $w(\kappa)=T$.
\item [(2)] Assume that $X$ has property $(\alpha)$. Then the following are equivalent.
\begin{itemize}
\item [(i)] The set  $\{ T^k\, :\, k\in\Zdb\}$ is $R$-bounded.
\item [(ii)] There is a  bounded unital homomorphim $w\colon C(\Tdb)\to B(X)$ 
such that $w(\kappa)=T$.
\item [(iii)] $T$ is a scalar type spectral operator and $\sigma(T)\subset\Tdb$.
\end{itemize}
\end{itemize}
\end{proposition}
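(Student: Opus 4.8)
The plan is to reduce the statement to results already in the paper via the identification $C^{*}_{\lambda}(\Zdb)=C(\Tdb)$. Since $\Zdb$ is amenable, $C^{*}_{\lambda}(\Zdb)=C^{*}(\Zdb)$, and the Fourier transform identifies $C^{*}(\Zdb)$ with $C(\widehat{\Zdb})=C(\Tdb)$ in such a way that $\sigma_\lambda(\delta_k)$ corresponds to $\kappa^{k}$ for every $k\in\Zdb$, where $\delta_k$ denotes the point mass at $k$. The first task is a translation: writing $\pi(k)=T^{k}$, a bounded operator $w\colon C(\Tdb)\to B(X)$ satisfies $w\circ\sigma_\lambda=\sigma_\pi$ (in the sense of Definition \ref{2Def}) if and only if $w$ is a unital homomorphism with $w(\kappa)=T$. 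If $w\circ\sigma_\lambda=\sigma_\pi$, then evaluating at $\delta_1$ gives $w(\kappa)=\sigma_\pi(\delta_1)=T$; moreover $w$ is automatically a homomorphism (as recorded after Definition \ref{2Def}), and it is nondegenerate because $\sigma_\pi$ is, so $w(1)=I_X$. Conversely, a unital homomorphism with $w(\kappa)=T$ has $w(\kappa^{-1})=T^{-1}$ (since $w(\kappa)w(\kappa^{-1})=I_X$ and $T$ is invertible), hence $w(\kappa^{k})=T^{k}$ for all $k\in\Zdb$, so $w\circ\sigma_\lambda$ and $\sigma_\pi$ agree on each $\delta_k$ and therefore on all of $\ell^{1}(\Zdb)$ by density of the finitely supported sequences.

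Granting this, part (1) is immediate from Theorem \ref{4MainThm} for the amenable group $\Zdb$: the set $\{T^{k}:k\in\Zdb\}$ is $\gamma$-bounded if and only if $\pi$ is, if and only if $\pi$ extends to a $\gamma$-bounded homomorphism $w\colon C^{*}_{\lambda}(\Zdb)\to B(X)$, if and only if there exists a $\gamma$-bounded unital homomorphism $w\colon C(\Tdb)\to B(X)$ with $w(\kappa)=T$. (For the last implication one also uses that each $\kappa^{k}$ lies in the unit ball of $C(\Tdb)$, so that $\gamma$-boundedness of $w$ returns that of $\{T^{k}\}$.)

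For part (2) recall that property $(\alpha)$ makes $R$-boundedness and $\gamma$-boundedness coincide on $X$. The equivalence (i)$\,\Leftrightarrow\,$(ii) is proved exactly as above, with Corollary \ref{5Alpha} in place of Theorem \ref{4MainThm}: $\{T^{k}:k\in\Zdb\}$ is $R$-bounded if and only if $\pi$ extends to a bounded homomorphism $w\colon C^{*}_{\lambda}(\Zdb)\to B(X)$, which by the translation is precisely assertion (ii). For (ii)$\,\Rightarrow\,$(iii): since $C(\Tdb)$ is a nuclear $C^*$-algebra, Theorem \ref{NMain} shows the homomorphism $w$ of (ii) is $R$-bounded, hence $\gamma$-bounded; being nondegenerate, it satisfies the hypotheses of Remark \ref{4Abelian} (with $G=\Zdb$, $\widehat{G}=\Tdb$), which furnishes a regular, $\gamma$-bounded, strong operator $\sigma$-additive spectral measure $P\colon\B(\Tdb)\to B(X)$ with $w(h)=\int_{\Tdb}h\,dP$ for $h\in C(\Tdb)$; evaluating at $\kappa$ yields $T=\int_{\Tdb}z\,dP(z)$. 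As a $\gamma$-bounded spectral measure has bounded range, $P$ is a bounded strong operator $\sigma$-additive spectral measure, so by the Dunford theory of spectral operators (see \cite{D}) $T$ is a scalar type spectral operator with resolution of the identity $P$ and $\sigma(T)\subset{\rm supp}(P)\subset\Tdb$.

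Finally, for (iii)$\,\Rightarrow\,$(ii): if $T$ is a scalar type spectral operator with resolution of the identity $E$ and $\sigma(T)\subset\Tdb$, then $E$ is a bounded strong operator $\sigma$-additive spectral measure concentrated on $\Tdb$, and $w(h)=\int_{\Tdb}h\,dE$ defines a bounded unital homomorphism on $C(\Tdb)$ (unital because $E(\Tdb)=I_X$, multiplicative by multiplicativity of $E$, bounded because $\sup_{A}\norm{E(A)}<\infty$) with $w(\kappa)=\int_{\Tdb}z\,dE(z)=T$. The non-formal ingredients are thus Theorems \ref{4MainThm} and \ref{NMain} and Remark \ref{4Abelian}, already proved, together with the classical fact invoked in (ii)$\,\Leftrightarrow\,$(iii) that integrating a bounded $\sigma$-additive spectral measure against the identity function produces exactly the scalar type spectral operators; I expect the only step genuinely needing care to be the identification $C^{*}_{\lambda}(\Zdb)=C(\Tdb)$ and the density arguments behind the translation.
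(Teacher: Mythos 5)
Your proof is correct, and for part (1) and the equivalence (i)$\Leftrightarrow$(ii) of part (2) it follows the paper exactly (Theorem \ref{4MainThm} for $G=\Zdb$, resp. Corollary \ref{5Alpha}, plus the routine identification of extensions in the sense of Definition \ref{2Def} with unital homomorphisms sending $\kappa$ to $T$, which the paper leaves implicit and you rightly spell out). Where you diverge is the implication (ii)$\Rightarrow$(iii). The paper factors $w$ through its kernel to obtain an isomorphic unital homomorphism $v\colon C(\sigma(T))\to B(X)$ with $\sigma(v(f))=f(\sigma(T))$, invokes the fact that $X$ (having property $(\alpha)$) cannot contain $c_0$ so that every bounded map $C(\sigma(T))\to X$ is weakly compact, and then applies Dowson's characterization of scalar type operators; this route yields the sharper spectral mapping information as a by-product. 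You instead upgrade $w$ to an $R$-bounded (hence $\gamma$-bounded) homomorphism via Theorem \ref{NMain} and nuclearity of $C(\Tdb)$, and then read off a strong operator $\sigma$-additive spectral measure on $\B(\Tdb)$ from Remark \ref{4Abelian} (i.e.\ the De Pagter--Ricker representation), so that $T=\int_{\Tdb}z\,dP(z)$ is scalar type spectral with $\sigma(T)\subset{\rm supp}(P)\subset\Tdb$. Both arguments ultimately outsource the same analytic content (producing a countably additive spectral measure from a $C(K)$-representation) to an external reference --- the paper to \cite{DU} and \cite{D}, you to \cite{DR2} via Remark \ref{4Abelian} --- and your version has the mild aesthetic advantage of staying inside the circle of results already set up in Sections 4 and 5. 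The converse (iii)$\Rightarrow$(ii) is the same in both treatments: integrating $C(\Tdb)$ against the resolution of the identity.
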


\begin{proof} Part (1) corresponds to Theorem \ref{4MainThm} when $G=\Zdb$ and in part (2), the 
equivalence between (i) and (ii) is given by Corollary \ref{5Alpha}. The implication
`(iii)$\,\Rightarrow\,$(ii)' follows from \cite[Thm. 6.24]{D}. Conversely, assume (ii). Then 
by \cite[Lem. 3.8]{KL}, $\sigma(T)\subset\Tdb$ and there is a bounded unital homomorphism 
$v\colon C(\sigma(T))\to B(X)$ (obtained by factorizing $w$ through its kernel) such that 
$v(\kappa)=T$, $\sigma(v(f))=f(\sigma(T))$ for any $f\in C(\sigma(T))$, and $v$ is an isomorphism onto its range. 
Since $X$ has property $(\alpha)$, it cannot contain $c_0$. Hence by \cite[VI, Thm. 15]{DU}, any bounded map
$C(\sigma(T))\to X$ is weakly compact. Applying \cite[Thm. 6.24]{D}, we deduce the assertion (iii).
\end{proof}

Turning to representations of the real line, let $(T_t)_{t\in\footnotesize{\Rdb}}$ 
be a bounded $c_0$-group on $X$, and let $A$ denote its infinitesimal generator. It 
spectrum $\sigma(A)$ is included in the
imaginary axis $i\Rdb$. Let ${\rm Rat}\subset C_0(\Rdb)$ denote the subalgebra of all 
rational functions $g$ with poles lying outside the real line and such that ${\rm deg}(g)\leq -1$. 
Rational functional calculus yields a natural definition
of $g(iA)$ for any such $g$. The following is the analog of 
Proposition \ref{5T} for the real line
and has an identical proof. Note that a special case of that result 
is announced in \cite[Cor. 7.6]{W2}, as a consequence of some unpublished work of Kalton and Weis.

\begin{proposition}\label{5R} Let $(T_t)_{t\in\footnotesize{\Rdb}}$ be a bounded $c_0$-group with generator $A$.

\begin{itemize}
\item [(1)] The set $\{ T_t\, :\, t\in\Rdb\}$ is $\gamma$-bounded if and only if there exists a $\gamma$-bounded nondegenerate homomorphim $w\colon C_0(\Rdb)\to B(X)$ such that $w(g)=g(iA)$ for any $g\in {\rm Rat}$.
\item [(2)] Assume that $X$ has property $(\alpha)$. Then the following are equivalent.
\begin{itemize}
\item [(i)] The set  $\{ T_t\, :\, t\in\Rdb\}$ is $R$-bounded.
\item [(ii)] There is a bounded nondegenerate homomorphim $w\colon C_0(\Rdb)\to B(X)$ such that $w(g)=g(iA)$ for any $g\in {\rm Rat}$.
\item [(iii)] $A$ is a scalar type spectral operator.
\end{itemize}
\end{itemize}
\end{proposition}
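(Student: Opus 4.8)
The plan is to transcribe the proof of Proposition \ref{5T}, replacing $\Zdb$ by $\Rdb$. Since $\Rdb$ is amenable with $\widehat{\Rdb}\cong\Rdb$, the Fourier transform identifies $C^{*}_{\lambda}(\Rdb)=C^{*}(\Rdb)$ with $C_{0}(\Rdb)$ and turns $\sigma_{\lambda}\colon L^{1}(\Rdb)\to C_{0}(\Rdb)$ into $k\mapsto\widehat{k}$ (see Remark \ref{4Abelian}). Through this identification, Theorem \ref{4MainThm}, applied to $G=\Rdb$ and the representation $t\mapsto T_{t}$, already asserts that $\{T_{t}:t\in\Rdb\}$ is $\gamma$-bounded if and only if $\pi$ extends, in the sense of Definition \ref{2Def}, to a $\gamma$-bounded homomorphism $w\colon C_{0}(\Rdb)\to B(X)$; such a $w$ is automatically nondegenerate, since $\sigma_{\pi}(k)=w(\widehat{k})$ and the span (\ref{2Ess}) is dense in $X$. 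So for part (1) it remains to check that, for a bounded homomorphism $w$, the condition $w\circ\sigma_{\lambda}=\sigma_{\pi}$ is the same as the condition ``$w(g)=g(iA)$ for all $g\in{\rm Rat}$'' of the statement.

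For this I would use the Laplace representation of the resolvent. For $\mu\in\Cdb$ with ${\rm Re}\,\mu>0$, the function $k_{\mu}(t)=e^{-\mu t}\chi_{(0,\infty)}(t)$ lies in $L^{1}(\Rdb)$, its Fourier transform is $g_{\mu}(s)=(\mu+is)^{-1}\in{\rm Rat}$, and
$$
\sigma_{\pi}(k_{\mu})=\int_{0}^{\infty}e^{-\mu t}T_{t}\,dt=(\mu I-A)^{-1}=g_{\mu}(iA),
$$
with the mirror computation using $e^{\mu t}\chi_{(-\infty,0)}$ handling $(\mu-is)^{-1}$. The functions $g_{\mu}$ and their mirror images generate ${\rm Rat}$ as an algebra (every pole off $\Rdb$ occurs as some $i\mu$ or $-i\mu$ with ${\rm Re}\,\mu>0$, higher-order poles arising as products), while the $k_{\mu}$'s together with their reflections span a dense subspace of $L^{1}(\Rdb)$ (by injectivity of the Laplace transform on the two half-lines). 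Hence, for a bounded homomorphism $w$: if $w\circ\sigma_{\lambda}=\sigma_{\pi}$, then $w$ and $g\mapsto g(iA)$ are algebra homomorphisms on ${\rm Rat}$ agreeing on a generating set, so they coincide on ${\rm Rat}$; conversely, if $w(g)=g(iA)$ on ${\rm Rat}$, then the bounded maps $w\circ\sigma_{\lambda}$ and $\sigma_{\pi}$ agree on the dense span of the $k_{\mu}$'s, so they coincide. Thus part (1) is exactly Theorem \ref{4MainThm} read through this dictionary, $w$ being unique by the remark following Definition \ref{2Def}.

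For part (2), the equivalence of (i) and (ii) is Corollary \ref{5Alpha} combined with part (1), a bounded homomorphism on the nuclear $C^{*}$-algebra $C^{*}_{\lambda}(\Rdb)$ being automatically $R$-bounded, hence $\gamma$-bounded, by Theorem \ref{NMain}. The implication `(iii)$\,\Rightarrow\,$(ii)' follows from \cite[Thm. 6.24]{D}: the resolution of the identity of a scalar type $A$, transported from $i\Rdb$ to $\Rdb$, gives a spectral measure $P$ with $iA=\int_{\Rdb}s\,dP(s)$, whence $w(g)=\int_{\Rdb}g\,dP$ is a bounded nondegenerate homomorphism on $C_{0}(\Rdb)$ with $w(g)=g(iA)$ for $g\in{\rm Rat}$. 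For `(ii)$\,\Rightarrow\,$(iii)': since $w$ is nondegenerate it extends to a unital homomorphism on the unitization of $C_{0}(\Rdb)$, which is isometrically $C(K)$ with $K=\Rdb\cup\{\infty\}$ the one-point compactification; running the $C(K)$-argument of \cite[Lem. 3.8]{KL} factors this through a quotient $C(L)$ with $L\subseteq K$ compact, producing a bounded unital homomorphism $v\colon C(L)\to B(X)$ that is an isomorphism onto its range and satisfies $\sigma(v(f))=f(L)$ for every $f\in C(L)$. Because $X$ has property $(\alpha)$ it contains no copy of $c_{0}$, so every bounded operator $C(L)\to X$ is weakly compact by \cite[VI, Thm. 15]{DU}; applying \cite[Thm. 6.24]{D} as in Proposition \ref{5T} then shows that $iA$, and hence $A$, is a scalar type spectral operator, its resolvents being recovered from $v$ via part (1).

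The step I expect to be the main obstacle is the bookkeeping needed to make the dictionary of the first two paragraphs watertight --- in particular, confirming that ``$w$ extends $\pi$'' and ``$w(g)=g(iA)$ on ${\rm Rat}$'' are literally the same hypothesis --- together with, in `(ii)$\,\Rightarrow\,$(iii)', the point at infinity appearing when $A$ is unbounded: one must verify that the compactified forms of \cite[Lem. 3.8]{KL} and of Dunford's \cite[Thm. 6.24]{D} still apply when $L$ contains $\infty$. Both are routine adaptations of the compact-group situation, which is exactly why, as stated, Proposition \ref{5R} admits an essentially identical proof to Proposition \ref{5T}.
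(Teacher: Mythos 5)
Your proposal is correct and follows exactly the route the paper intends: the paper's entire proof of this proposition is the remark that it ``has an identical proof'' to the $\Zdb$-case (Proposition \ref{5T}), i.e.\ Theorem \ref{4MainThm} for part (1), and Corollary \ref{5Alpha} together with \cite[Lem. 3.8]{KL}, \cite[VI, Thm. 15]{DU} and \cite[Thm. 6.24]{D} for part (2). The only thing you add beyond the paper is the explicit Laplace-transform dictionary identifying ``$w$ extends $\pi$'' with ``$w(g)=g(iA)$ on ${\rm Rat}$'', which the paper leaves implicit and which you carry out correctly.
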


Let $(X_n)_{n\in\footnotesize{\Zdb}}$ be an unconditional decomposition of a Banach space $X$. 
For any bounded sequence $\theta=(\theta_n)_{n\in\footnotesize{\Zdb}}$ of complex numbers,
let $T_\theta\colon X\to X$ be the associated multiplier operator defined by
$$
T_\theta\Bigl(\sum_n x_n\Bigr)\,=\,\sum_n\theta_n x_n,\qquad x_n\in X_n.
$$
We say that the decomposition $(X_n)_{n\in\footnotesize{\Zdb}}$ is $\gamma$-unconditional
(resp. $R$-unconditional) if the set
$$
\bigl\{T_\theta\,:\,\theta\in\ell^{\infty}_{\footnotesize{\Zdb}},\ \norm{\theta}_\infty\leq 1\bigr\}\, \subset\, B(X)
$$
is $\gamma$-bounded (resp. $R$-bounded).

For any bounded representation $\pi\colon\Tdb\to B(X)$, and any $n\in\Zdb$, we let $\widehat{\pi}(n)$ 
denote the $n$th Fourier coefficient of $\pi$, defined by
$$
\widehat{\pi}(n)\,=\,\frac{1}{2\pi}\,\int_{0}^{2\pi} \pi(t)e^{-int}\, dt\,.
$$
Equivalently, $\widehat{\pi}(n)=\sigma_{\pi}(t\mapsto e^{-int})$. Each $\widehat{\pi}(n)\colon X\to X$ 
is a bounded projection, the ranges $\widehat{\pi}(n)X$ form a direct sum and 
$\oplus_n  \widehat{\pi}(n)X$ is dense in $X$. However $\bigl(\widehat{\pi}(n)X\bigr)_{n\in\footnotesize{\Zdb}}$ is not a Schauder decomposition in general. (Indeed, take $X=L^{1}(\Tdb)$ and let $\pi$ be the regular representation of $\Tdb$ on
$L^{1}(\Tdb)$. Then $\widehat{\pi}(n)f=\widehat{f}(-n) e^{-in\bullet}$ for any $f$, and the Fourier decomposition on $L^{1}(\Tdb)$ is not a Schauder decomposition.)

\begin{proposition}\label{5R} 
Let $\pi\colon\Tdb\to B(X)$ be a bounded representation.

\begin{itemize}
\item [(1)] $\pi$ is $\gamma$-bounded if and only if 
$\bigl(\widehat{\pi}(n)X\bigr)_{n\in\footnotesize{\Zdb}}$ is a $\gamma$-unconditional decomposition of $X$.
\item [(2)] Assume that $X$ has property $(\alpha)$. Then $\pi$ is $R$-bounded if and only if 
$\bigl(\widehat{\pi}(n)X\bigr)_{n\in\footnotesize{\Zdb}}$ is an unconditional decomposition of $X$.
\end{itemize}
\end{proposition}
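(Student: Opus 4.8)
The plan is to deduce both parts from the results of Sections 4 and 5, through the Fourier identification of $C^{*}_{\lambda}(\Tdb)$ with $c_{0}(\Zdb)$. Since $\Tdb$ is amenable and abelian, $C^{*}_{\lambda}(\Tdb)=C^{*}(\Tdb)=C_{0}(\widehat{\Tdb})=c_{0}(\Zdb)$, and under this identification (coming from Parseval's theorem on $L^{2}(\Tdb)$) the operator $\sigma_{\lambda}(k)$ corresponds, for $k\in L^{1}(\Tdb)$, to diagonal multiplication by the sequence $(\widehat{k}(n))_{n}$ of Fourier coefficients of $k$; in particular $\sigma_{\lambda}(t\mapsto e^{-int})$ corresponds to the unit vector $\delta_{-n}\in c_{0}(\Zdb)$. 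Two elementary identities, both coming straight from $\widehat{\pi}(n)=\sigma_{\pi}(t\mapsto e^{-int})$, will be used repeatedly: first, $\pi(s)\widehat{\pi}(n)=e^{ins}\widehat{\pi}(n)$ for all $s\in\Tdb$ and $n\in\Zdb$ (so that whenever $(\widehat{\pi}(n)X)_{n}$ is a Schauder decomposition, $\pi(s)$ is the multiplier with symbol $(e^{ins})_{n}$); and second, $\sigma_{\pi}(k)\widehat{\pi}(n)=\widehat{k}(-n)\widehat{\pi}(n)$ for $k\in L^{1}(\Tdb)$ (so that $\widehat{\pi}(n)$ is the decomposition projection and $\sigma_{\pi}(k)$ is the multiplier with symbol $(\widehat{k}(-n))_{n}$).

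The heart of the matter is the following equivalence, valid for an arbitrary bounded representation $\pi\colon\Tdb\to B(X)$: \emph{$\pi$ extends to a bounded homomorphism $w\colon C^{*}_{\lambda}(\Tdb)\to B(X)$ if and only if $(\widehat{\pi}(n)X)_{n\in\Zdb}$ is an unconditional Schauder decomposition of $X$; and in that case $w$ is $\gamma$-bounded (resp. $R$-bounded) if and only if that decomposition is $\gamma$-unconditional (resp. $R$-unconditional).} For the ``if'' direction: given the decomposition, let $w(\theta)$ be the multiplier acting as $\theta_{-n}$ on $\widehat{\pi}(n)X$; unconditionality makes $w\colon c_{0}(\Zdb)\to B(X)$ a bounded homomorphism, the identity $\sigma_{\pi}(k)\widehat{\pi}(n)=\widehat{k}(-n)\widehat{\pi}(n)$ gives $w\circ\sigma_{\lambda}=\sigma_{\pi}$, and $\{w(\theta):\norm{\theta}_{c_{0}}\leq 1\}$ is contained in the set $\{T_{\theta}:\norm{\theta}_{\infty}\leq 1\}$ occurring in the definition, hence inherits its $\gamma$- (or $R$-) boundedness. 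For the ``only if'' direction: $w$ is automatically nondegenerate, since its range contains $\sigma_{\pi}(L^{1}(\Tdb))$, which spans a dense subspace by (\ref{2Ess}); setting $P_{n}:=w(\delta_{-n})$ one gets $P_{n}=\widehat{\pi}(n)$, the $P_{n}$ are pairwise disjoint idempotents, and $\sum_{\vert n\vert\leq N}P_{n}=w\bigl(\sum_{\vert n\vert\leq N}\delta_{n}\bigr)\to I_{X}$ strongly (because $\bigl(\sum_{\vert n\vert\leq N}\delta_{n}\bigr)_{N}$ is a contractive approximate identity of $c_{0}(\Zdb)$ and $w$ is nondegenerate), so $(\widehat{\pi}(n)X)_{n}$ is a Schauder decomposition. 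To upgrade this to unconditionality --- and, under the $\gamma$-boundedness hypothesis on $w$, to $\gamma$-unconditionality --- I would exploit that $\ell^{\infty}(\Zdb)=(c_{0}(\Zdb))^{**}$: every $\theta$ in the unit ball of $\ell^{\infty}(\Zdb)$ is the weak${}^{*}$-limit of a net $(\psi_{a})$ in the unit ball of $c_{0}(\Zdb)$, so $(\psi_{a})_{n}\to\theta_{n}$ for each $n$, the $w(\psi_{a})$ are uniformly bounded, and $w(\psi_{a})$ converges strongly on the dense subspace $\oplus_{n}\widehat{\pi}(n)X$ to the multiplier with symbol $(\theta_{-n})_{n}$, hence strongly on all of $X$. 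This shows at once that every such multiplier is bounded (so the decomposition is unconditional) and that $\{T_{\theta}:\norm{\theta}_{\infty}\leq 1\}$ lies in the strong-operator closure of the $\gamma$-bounded set $\{w(\psi):\norm{\psi}_{c_{0}}\leq 1\}$, which remains $\gamma$-bounded with the same bound since the $G(X)$-norm is continuous; the $R$-version is identical.

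Granting this equivalence, part (1) follows by combining it with Theorem \ref{4MainThm} applied to $G=\Tdb$ (which asserts that $\pi$ is $\gamma$-bounded exactly when it extends to a bounded homomorphism $w$ on $C^{*}_{\lambda}(\Tdb)$ that is moreover $\gamma$-bounded, i.e. exactly when $(\widehat{\pi}(n)X)_{n}$ is a $\gamma$-unconditional decomposition), and part (2) follows by combining it with Corollary \ref{5Alpha} (which, for $X$ with property $(\alpha)$, asserts that $\pi$ is $R$-bounded exactly when it extends to a bounded homomorphism on $C^{*}_{\lambda}(\Tdb)$, i.e. exactly when $(\widehat{\pi}(n)X)_{n}$ is an unconditional Schauder decomposition). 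I expect the only genuinely delicate point to be this passage from $c_{0}(\Zdb)$ to $\ell^{\infty}(\Zdb)$: the transference machinery of Section 4 produces a homomorphism only on the group $C^{*}$-algebra $c_{0}(\Zdb)$, whereas an unconditional (or $\gamma$-unconditional) decomposition is a statement about \emph{all} bounded multipliers, so $w$ must be extended by a weak${}^{*}$-density / strong-limit argument that preserves the relevant boundedness constant; the remainder is routine, the one thing to watch being the Fourier conventions (in particular the index reversal $n\mapsto -n$) so as to land on the correct normalization $w(\delta_{-n})=\widehat{\pi}(n)$.
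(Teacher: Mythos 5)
Your proof is correct and follows essentially the same route as the paper: identify $C^{*}_{\lambda}(\Tdb)$ with $c_{0}(\Zdb)$, observe that a bounded homomorphism $w$ extending $\pi$ must act on finitely supported sequences by $w\bigl((\theta_n)_n\bigr)=\sum_n\theta_{-n}\widehat{\pi}(n)$, relate ($\gamma$-)boundedness of $w$ to ($\gamma$-)unconditionality of the decomposition, and conclude via Theorem \ref{4MainThm} and Corollary \ref{5Alpha}. The only difference is that you spell out the weak${}^{*}$-density passage from $c_{0}(\Zdb)$ to $\ell^{\infty}(\Zdb)$ (and the preservation of the $\gamma$-bound under strong-operator limits) which the paper dismisses as ``clear''; this is a correct and welcome elaboration, not a different argument.
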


\begin{proof} Assume that $\pi$ extends to a bounded homomorphism
$w\colon c_{0,\footnotesize{\Zdb}}\to B(X)$. 
Then for any finitely supported scalar sequence $(\theta_n)_{n\in\footnotesize{\Zdb}}$, we have
$$
w\bigl((\theta_n)_{n}\bigr)\,=\,\sum_{n} \theta_{-n}\widehat{\pi}(n).
$$
Since $w$ is nondegenerate and bounded, this implies that 
$\bigl(\widehat{\pi}(n)X\bigr)_{n\in\footnotesize{\Zdb}}$ is an unconditional decomposition of 
$X$. It is clear that $\bigl(\widehat{\pi}(n)X\bigr)_{n\in\footnotesize{\Zdb}}$ is actually 
$\gamma$-unconditional if and only if $w$ is $\gamma$-bounded. 
The result therefore follows from
Theorem \ref{4MainThm} and Corollary \ref{5Alpha}.
\end{proof}

\bigskip
In the last part of this section, we are going to discuss the failure of the equivalence (i)$\,\Leftrightarrow\,$(ii) in Proposition 
\ref{5T}, (2), when $X$ is not supposed to have property $(\alpha)$. We use ideas from 
\cite{DSW} and \cite{KL}. Let $(P_n)_{n\geq 1}$ be a sequence of 
bounded projections on some Banach space $X$. We say that this sequence is unconditional if  $\bigl(P_nX\bigr)_{n\geq 1}$ is an unconditional decomposition of $X$, and we say that $(P_n)_{n\geq 1}$ has property $(\alpha)$ if further there is a constant $\alpha\geq 1$ such that
\begin{equation}\label{5Alpha2}
\Bignorm{\sum_{i,j}\varepsilon_i \otimes t_{ij} P_j(x_i)}_{\ra{X}}\,\leq\,\alpha\,\sup_{i,j}\vert t_{ij}\vert\, 
\Bignorm{\sum_{i,j}\varepsilon_i \otimes P_j(x_i)}_{\ra{X}}
\end{equation}
for any finite families $(x_{j})_{j}$ in $X$ and $(t_{ij})_{i,j}$ in $\Cdb$. 
If $(P_n)_{n\geq 1}$ is unconditional, then we have a uniform equivalence
$$
\Bignorm{\sum_{i,j}\varepsilon_i \otimes P_j(x_i)}_{\ra{X}}\,\approx\,
\Bignorm{\sum_{i,j}\varepsilon_i\otimes\varepsilon_j \otimes P_j(x_i)}_{\rara{X}}.
$$
Hence if $X$ has property $(\alpha)$, any unconditional sequence $(P_n)_{n\geq 1}$ on $X$
has property $(\alpha)$. Conversely, let $P_n\colon\ra{X}\to\ra{X}$ be 
the canonical projection defined by letting
$$
P_n\Bigl(\sum_{j\geq 1} \varepsilon_j\otimes x_j\Bigr)\, =\varepsilon_n\otimes x_n.
$$
Then $(P_n)_{n\geq 1}$ is unconditional on $\ra{X}$ for any $X$, and this sequence has 
property $(\alpha)$ on $\ra{X}$ if and only if $X$ has property $(\alpha)$.

Here is another typical example. For any $1\leq p<\infty$, let $S^p$ denote the Schatten
$p$-class on $\ell^2$ and regard any element of $S^p$ as a bi-infinite matrix
$a=[a_{ij}]_{i,j\geq 1}$ in the usual way. We let $E_{ij}$ denote the matrix units of $B(\ell^{2})$ 
and write $a=\sum_{i,j} a_{ij} E_{ij}\,$ for simplicity. For any $n\geq 1$, let $P_n\colon S^p\to S^p$
be the `$n$th column projection' defined by 
$$
P_n\Bigl(\sum_{i,j}a_{ij} E_{ij}\Bigr)\,=\sum_i a_{in} E_{in}.
$$
It is clear that the sequence $(P_n)_{n\geq 1}$ is unconditional on $S^p$. However if $p\not=2$,
$(P_n)_{n\geq 1}$ does not have property $(\alpha)$. This follows from the lack of unconditionality of 
the matrix decomposition on $S^p$. Indeed, let $a=\sum_{i,j} a_{ij} E_{ij}\,$, let $(t_{ij})_{i,j}$ 
be a finite family of complex numbers and set $x_i=\sum_j a_{ij} E_{ij}\,$ for any $i\geq 1$.
Then 
$$
\Bignorm{\sum_{i,j}\varepsilon_i \otimes  P_j(x_i)}_{\ra{X}}\,=\,\bignorm{[a_{ij}]}_{S^p}\qquad \hbox{and}\qquad
\Bignorm{\sum_{i,j}\varepsilon_i \otimes t_{ij} P_j(x_i)}_{\ra{X}}\,=\,\bignorm{[t_{ij}a_{ij}]}_{S^p}.
$$
Hence (\ref{5Alpha2}) cannot hold true.

\begin{proposition}\label{5CounterExample} Assume that $X$ has a finite cotype
and admits a sequence $(P_n)_{n\geq 1}$ of projections which is unconditional but does not have
property $(\alpha)$. Then there exists an invertible operator $T\colon X\to X$ such that 
the set $\{T^k\, :\, k\in\Zdb\}$ is not $R$-bounded, but there exists a 
bounded unital homomorphism $w\colon C(\Tdb) \to B(X)$ such that $w(\kappa)=T$.
\end{proposition}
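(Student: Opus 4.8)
The plan is to realise $T$ as a diagonal operator along the decomposition $\bigl(P_nX\bigr)_n$ whose eigenvalues fill the circle in a sufficiently ``free'' way. First I would fix real numbers $\alpha_1,\alpha_2,\ldots$ for which $\{1,\alpha_1,\alpha_2,\ldots\}$ is linearly independent over $\Qdb$, and put $\lambda_n=e^{2\pi i\alpha_n}\in\Tdb$. Since $\bigl(P_nX\bigr)_n$ is an unconditional decomposition of $X$, the map $\theta=(\theta_n)_n\mapsto T_\theta=\sum_n\theta_nP_n$ (defined by strong convergence on the dense subspace $\bigoplus_nP_nX$) is a bounded unital homomorphism from $\ell^{\infty}$ into $B(X)$, with some norm $C$. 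Composing it with the contractive unital homomorphism $C(\Tdb)\to\ell^{\infty}$, $f\mapsto\bigl(f(\lambda_n)\bigr)_n$, produces a bounded unital homomorphism $w\colon C(\Tdb)\to B(X)$ given by $w(f)=\sum_nf(\lambda_n)P_n$, with $w(\kappa)=\sum_n\lambda_nP_n=:T$. As $\vert\lambda_n\vert=1$ for all $n$, the operator $w(\overline\kappa)=\sum_n\overline{\lambda_n}P_n$ is a two-sided inverse of $T$, so $T$ is invertible. It then remains only to check that $\{T^{k}:k\in\Zdb\}$ is not $R$-bounded.

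I would prove this by contraposition: assuming $\{T^{k}:k\in\Zdb\}$ is $R$-bounded with $R$-bound $R$, I would deduce that $(P_n)_n$ has property $(\alpha)$, contradicting the hypothesis. Fix $N\geq1$, a scalar matrix $(t_{ij})_{1\leq i,j\leq N}$ with $\sup_{i,j}\vert t_{ij}\vert\leq 1$, and vectors $x_1,\ldots,x_N\in X$, and let $Q_N=\sum_{j=1}^{N}P_j$. Since both sides of $(\ref{5Alpha2})$ depend only on the $Q_Nx_i$'s, I may assume $x_i\in Q_NX$. For $c=(c_1,\ldots,c_N)\in\Cdb^{N}$ write $R_{(c)}=\sum_{j=1}^{N}c_jP_j$; then on $Q_NX$ one has $T^{k}=R_{(\lambda_1^{k},\ldots,\lambda_N^{k})}$ and $\sum_{j=1}^{N}t_{ij}P_j(x_i)=R_{(t_{i1},\ldots,t_{iN})}(x_i)$, and moreover $Q_NX$ is invariant under $T$, so $\{T^{k}|_{Q_NX}:k\in\Zdb\}$ is $R$-bounded on $Q_NX$ with constant $\leq R$.

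The geometric heart of the argument is the following: by Kronecker's theorem the orbit $\{(\lambda_1^{k},\ldots,\lambda_N^{k}):k\in\Zdb\}$ is dense in $\Tdb^{N}$, and the closed absolutely convex hull of $\Tdb^{N}$ is the closed unit polydisc $\overline{\Ddb}^{N}$ (indeed $\Tdb^{N}$ is the set of extreme points of $\overline{\Ddb}^{N}$, so Krein--Milman applies). Hence each row $(t_{i1},\ldots,t_{iN})$ belongs to $\overline{\rm aco}\{(\lambda_1^{k},\ldots,\lambda_N^{k}):k\in\Zdb\}$; pushing this through the norm-continuous linear map $c\mapsto R_{(c)}|_{Q_NX}$ shows that $R_{(t_{i1},\ldots,t_{iN})}|_{Q_NX}$ lies in the strongly closed absolutely convex hull of $\{T^{k}|_{Q_NX}:k\in\Zdb\}$. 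By \cite[Lem. 3.2]{CPSW} the latter hull is $R$-bounded with constant $\leq 2R$, so $\{R_{(t_{i1},\ldots,t_{iN})}|_{Q_NX}:i=1,\ldots,N\}$ is $R$-bounded with constant $\leq 2R$. Evaluating this $R$-bound on $x_1,\ldots,x_N\in Q_NX$ and undoing the normalisation $\sup_{i,j}\vert t_{ij}\vert\leq1$ gives
$$
\Bignorm{\sum_{i,j}\varepsilon_i\otimes t_{ij}P_j(x_i)}_{\ra{X}}\;\leq\;2R\,\sup_{i,j}\vert t_{ij}\vert\;\Bignorm{\sum_{i,j}\varepsilon_i\otimes P_j(x_i)}_{\ra{X}},
$$
which is precisely $(\ref{5Alpha2})$ with $\alpha=2R$, uniformly in $N$; this contradicts the failure of property $(\alpha)$ for $(P_n)_n$ and finishes the proof. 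The finite cotype hypothesis is convenient only to pass freely between Rademacher and Gaussian averages.

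I expect the main obstacle to be precisely the step that converts $R$-boundedness of the single sequence $\{T^{k}\}$ into the genuinely two-dimensional matricial estimate $(\ref{5Alpha2})$: it requires choosing the eigenvalues $\lambda_n$ so that every finite ``orbit'' $\{(\lambda_1^{k},\ldots,\lambda_N^{k}):k\in\Zdb\}$ is dense in $\Tdb^{N}$ (whence the rational-independence condition on the $\alpha_n$), together with the identification of $\overline{\rm aco}(\Tdb^{N})$ with the full polydisc so that an arbitrary contractive row vector can be reached. Everything else --- boundedness of $w$, invertibility of $T$, and the reduction to $Q_NX$ --- is routine once the decomposition is known to be unconditional.
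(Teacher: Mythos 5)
Your proof is correct, but it establishes the key point --- that $\{T^k\,:\,k\in\Zdb\}$ is not $R$-bounded --- by a genuinely different route from the paper. The construction of $w$ and $T=w(\kappa)$ is the same in both arguments; the difference lies entirely in how the failure of $R$-boundedness of the powers is extracted from the failure of property $(\alpha)$ for $(P_n)_n$. The paper takes an arbitrary sequence of \emph{distinct} points $\zeta_j\in\Tdb$, shows directly (as in \cite[Remark 4.6]{KL}, by interpolating a prescribed contractive matrix $(t_{ij})$ at finitely many of the $\zeta_j$ by functions of sup-norm at most $\sup_{i,j}|t_{ij}|$) that $w$ itself cannot be $R$-bounded, and then invokes Theorem \ref{4MainThm} together with the finite cotype hypothesis: if $\{T^k\}$ were $R$-bounded, the unique extension $w$ would be $\gamma$-bounded, hence $R$-bounded. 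You instead choose the eigenvalues $\lambda_n$ rationally independent so that Kronecker's theorem makes each finite orbit $\{(\lambda_1^k,\ldots,\lambda_N^k)\}$ dense in $\Tdb^N$, and combine this with Krein--Milman and \cite[Lem. 3.2]{CPSW} to pass directly from the $R$-bound of $\{T^k|_{Q_NX}\}$ to the estimate (\ref{5Alpha2}) with constant $2R$. This is more self-contained, since it bypasses the transference Theorem \ref{4MainThm} entirely, and --- as you observe only in passing --- it never actually uses the finite cotype hypothesis, so it proves a marginally stronger statement; the price is the special choice of the $\lambda_n$ and the density argument, whereas the paper's version works for any distinct $\zeta_j$ and is a two-line consequence of machinery already developed.
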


\begin{proof} Let $(\zeta_j)_{j\geq 1}$ be a sequence of distinct points of $\Tdb$. 
Since $(P_n)_{n\geq 1}$ is unconditional, one defines a bounded 
unital homomorphism $w\colon C(\Tdb) \to B(X)$ by letting
$$
w(f)\,=\,\sum_{j=1}^{\infty} f(\zeta_j)\, P_j,\qquad f\in C(\Tdb).
$$
Arguing as in \cite[Remark 4.6]{KL}, we obtain that $w$ is not $R$-bounded. 

Let $T=w(\kappa)$, this is an invertible operator. If $\{T^k\, :\, k\in\Zdb\}$ were 
$R$-bounded, then $w$ would be $R$-bounded as well, by Theorem \ref{4MainThm} and the cotype assumption.
\end{proof}

According to the above discussion, Proposition \ref{5CounterExample}
applies on $S^p$ for any $1\leq p\not=2<\infty$, as well as on any space of the form $\ra{X}$ 
when $X$ does not have property $(\alpha)$ but has a finite cotype. 
This leads to the following general question: 

{\it When $X$ does not have property $(\alpha)$,
find a characterization of bounded invertible operators 
$T\colon X\to X$ such that $\pi\colon k\in\Zdb\mapsto T^k$ extends to a bounded homomorphism $C(\Tdb)\to B(X)$.}

\vskip 1cm
\noindent
{\bf Acknowledgements.} I wish to thank \'Eric Ricard for showing me a proof of Theorem \ref{NMain} and for
several stimulating discussions. I also thank the referee for his comments and 
the careful reading of the manuscript.

\end{document}